\newtheorem{theorem}{Theorem}[section]
\newtheorem{corollary}[theorem]{Corollary}
\newtheorem{proposition}[theorem]{Proposition}
\newtheorem{lemma}[theorem]{Lemma}
\theoremstyle{definition}
\DeclareSymbolFont{AMSb}{U}{msb}{m}{n}
\DeclareMathSymbol{\N}{\mathbin}{AMSb}{"4E}
\DeclareMathSymbol{\Z}{\mathbin}{AMSb}{"5A}
\DeclareMathSymbol{\R}{\mathbin}{AMSb}{"52}
\DeclareMathSymbol{\Q}{\mathbin}{AMSb}{"51}
\DeclareMathSymbol{\I}{\mathbin}{AMSb}{"49}
\DeclareMathSymbol{\C}{\mathbin}{AMSb}{"43}
\theoremstyle{definition}
\newcommand{\Aut}{\mbox{\rm Aut}}
\newcommand{\xmax}{x_{\scriptsize\rm max}}
\newcommand{\xmin}{x_{\scriptsize\rm min}}
\newcommand{\toprank}{\mbox{\rm rank}_{\scriptsize\rm top}}
\newcommand{\symbrank}{\mbox{\rm rank}_{\scriptsize\rm symb}}
\newcommand{\diam}{\mbox{\rm diam}}
\newcommand{\rest}{\!\upharpoonright\!}
\newcommand{\Ret}{\mbox{\rm Ret}}
\begin{document}
\title[Subshifts of Finite Symbolic Rank]{Subshifts of Finite Symbolic Rank}


\author{Su Gao}
\address{School of Mathematical Sciences and LPMC, Nankai University, Tianjin 300071, P.R. China}
\email{sgao@nankai.edu.cn}
\thanks{The first author acknowledges the partial support of his research by the National Natural Science Foundation of
China (NSFC) grants 12250710128 and 12271263.
}

\author{Ruiwen Li}
\address{School of Mathematical Sciences and LPMC, Nankai University, Tianjin 300071, P.R. China}
\email{rwli@mail.nankai.edu.cn}

\subjclass[2020]{Primary 37B10; Secondary 54H15}

\date{\today}


\begin{abstract} The definition of subshifts of finite symbolic rank is motivated by the finite rank measure-preserving transformations which have been extensively studied in ergodic theory. In this paper we study subshifts of finite symbolic rank as essentially minimal Cantor systems. We show that minimal subshifts of finite symbolic rank have finite topological rank, and conversely, every minimal Cantor system of finite topological rank is either an odometer or conjugate to a minimal subshift of finite symbolic rank. We characterize the class of all minimal Cantor systems conjugate to a rank-$1$ subshift and show that it is dense but not generic in the Polish space of all minimal Cantor systems. Within some different Polish coding spaces of subshifts we also show that the rank-1 subshifts are dense but not generic. Finally we study topological factors of minimal subshifts of finite symbolic rank. We show that every infinite odometer and every irrational rotation is the maximal equicontinuous factor of a minimal subshift of symbolic rank $2$, and that a subshift factor of a minimal subshift of finite symbolic rank has finite symbolic rank. 
\end{abstract}

\maketitle

\section{Introduction}

This paper is a contribution to the study of symbolic and topological dynamical systems, but the main notion studied here originated from ergodic theory. 

One of the main sources of examples and counterexamples in ergodic theory has been the measure-preserving transformations constructed from a cutting-and-stacking process. The first such example was given by Chac\'{o}n in \cite{Ch} more than half a century ago, and since then there has been a large literature devoted to the study of measure-theoretic properties of these transformations. In fact, much of the work concentrated on the so-called {\em rank-one transformations}, where there is only one stack in every step of the cutting-and-stacking process. This is partially because the class of rank-one transformations forms a dense $G_\delta$ set in (the Polish space of) the class of all measure-preserving transformations (c.f., e.g. \cite{Fe} and \cite{GH14}), and thus behaviors of the rank-one transformations capture the generic behaviors of all measure-preserving transformations. On the other hand, measure-preserving transformations of arbitrary finite rank (where there is a uniform finite bound on the number of stacks used in every step of the cutting-and-stacking process, first defined by Ornstein--Rudolph--Weiss in \cite{ORW}) have also been extensively studied. In particular, they are known to have different models, some of which are of geometric nature, and some symbolic. Ferenczi \cite{Fe} gave an excellent survey over a quarter of a century ago.

Meanwhile, there has also been an effort to develop an analogous theory of topological dynamics for Cantor systems of finite topological rank. The starting point was \cite{HPS} by Herman--Putnam--Skau, in which essentially minimal Cantor systems were described by their Bratteli--Vershik representations through a nested sequence of Kakutani--Rohlin partitions. These were seen to be analogous/parallel to the cutting-and-stacking processes used to build measure-preserving transformations. In the time period between \cite{HPS} and \cite{DM} by Downarowicz--Maass, many special kinds of Cantor systems were studied through their Bratteli--Vershik representations. These systems included odometers, substitution subshifts, linear recurrent subshifts, symbolic codings of interval exchange maps, etc. Along with these studies, there had been a number of attempts to produce a good definition of topological rank for Cantor systems. In the end, the natural notion of finite rank Bratteli diagram (where there is a uniform bound on the number of nodes on all of its levels) was chosen to be the connotation of finite topological rank. The supporting evidence was abundant. First, all the special kinds of systems mentioned above have finite rank Bratteli diagrams. More importantly, in \cite{DM} the authors considered general Cantor minimal systems with finite rank Bratteli diagrams and proved that they are either equicontinuous or else expansive and hence conjugate to subshifts. This important dichotomy theorem and its proof suggested that the assumption of finite rank Bratteli diagram has far-reaching consequences. Shortly after, the terminology of finite topological rank started to be used (by Durand in \cite{Du}), and since then the Cantor minimal systems of finite topological rank have been more extensively studied in this generality (see Durand \cite{Du}, Bressaud--Durand--Maass \cite{BDM}, Bezuglyi--Kwiatkowski--Medynets--Solomyak \cite{BKMS}, Donoso--Durand--Maass--Petite \cite{DDMP}, Durand--Perrin \cite{DP} and Golestani--Hosseini \cite{GH}).

At the same time, motivated by the symbolic definition of measure-preserving transformations of finite rank, Ferenczi \cite{Fe96} introduced a notion of $\mathcal{S}$-adic subshift. An $\mathcal{S}$-adic subshift is defined from a substitution process with infinitely many levels, and it has finite alphabet rank if there is a bound on the numbers of letters used on all of the levels. This notion of rank, as well as its interplay with the notion of finite topological rank, have been studied in Durand \cite{Du00},  Berth\'e--Delecroix \cite{BD14}, Leroy \cite{Le}, Berth\'e--Steiner--Thuswaldner--Solomyak \cite{BSTY}, Donoso--Durand--Maass--Petite \cite{DDMP}, Espinoza \cite{Es} and \cite{Es2023} etc. In particular, it has been  shown in \cite{DDMP} that every minimal Cantor system of finite topological rank is either an odometer or conjugate to an $\mathcal{S}$-adic subshift of finite alphabet rank. Conversely, every $\mathcal{S}$-adic subshift of finite alphabet rank has finite topological rank. 

In this paper we consider a notion of symbolic rank for subshifts which is more directly motivated by the symbolic definition of finite rank measure-preserving transformations. In some sense, a subshift of finite symbolic rank is simply a finite rank measure-preserving transformation without the measure. 
Rank-one subshifts in this sense have been studied in the literature. For example, they are known to have zero topological entropy, and Bourgain \cite{Bo} proved Sarnak's conjecture for minimal rank-one subshifts. Other topological properties of rank-one subshifts have been considered in Adams--Ferenczi--Petersen \cite{AFP}, Danilenko \cite{Da}, El Abdalaoui--Lema\'nczyk--de la Rue \cite{ELD}, Etedadialiabadi--Gao \cite{EG}, Gao--Hill \cite{GH} and \cite{GH16}, Gao--Ziegler \cite{GZ19} and \cite{GZ20}, etc.

A systematic study of subshifts of finite symbolic rank started in \cite{GJJLLSW}. 
Among other things, it was proved that they all have zero topological entropy. Much of \cite{GJJLLSW} focused on the combinatorial properties of infinite words that generate subshifts of finite symbolic rank, and not so much on the topological properties of the subshifts themselves. In particular, one of the main questions left unsolved was how the symbolic rank relates to the other more well-established notions of rank for various Cantor systems. 



In this paper we prove a number of results on the topological properties of subshifts of finite symbolic rank. The subshifts we consider all have alphabet $\{0,1\}$, and therefore they are subshifts of $2^\mathbb{Z}$. We show that any minimal subshift of finite symbolic rank has finite topological rank (Theorem~\ref{thm:6.7}) and conversely, any Cantor minimal system of finite topological rank is either an odometer or conjugate to a minimal subshift of finite symbolic rank (Theorem~\ref{thm:6.9}). 
Thus the notion of finite symbolic rank is essentially the same as the notion of finite topological rank, and by previous results (\cite{DDMP}), it is also essentially the same as the notion of finite alphabet rank. 

Although we do not solve any open problem about systems of finite topological rank in this paper, it is our hope that the relatively new notion of finite symbolic rank will provide a new perspective and an alternative approach to the future study of these systems. For example, one of the key open problems about minimal Cantor systems is their classification up to topogical conjugacy. In \cite{GH} the authors have described a complete classification of subshifts of symbolic rank one. This gives hope that further progress can be made by considering subshifts of higher symbolic ranks.  Another example of a major open problem is Sarnak's conjecture for topological dynamical systems of zero topological entropy. Bourgain \cite{Bo} gave a proof for all minimal subshifts of symbolic rank one, and in \cite{EG} Sarnak's conjecture was confirmed for a class of subshifts of symbolic rank one which is essentially minimal but not minimal. Note that by results of \cite{AD} or our Proposition~\ref{prop:6.11} below, subshifts of symbolic rank one can have arbitrarily high topological rank or alphabet rank. Hence the results of \cite{GH}, \cite{Bo} and \cite{EG} cannot be covererd by results about any fixed topological rank or alphabet rank.

In this paper we also consider various classes of Cantor systems and characterize their descriptive complexity. In particular, the class of all Cantor systems can be coded by the Polish space $\Aut(\mathcal{C})$ (this is defined and discussed in Subsection~\ref{subsec:2.3}), and we show that the classes of all essentially minimal Cantor systems, minimal Cantor systems, as well as those whose topological rank has a fixed bound all form $G_\delta$ subspaces, and hence are Polish (Section~\ref{sec:3}). On the other hand, the class of all minimal Cantor systems conjugate to a rank-$1$ subshift is dense but not generic in the Polish space of all minimal Cantor systems (Proposition~\ref{prop:5.4}). Additionally we consider two more Polish spaces of subshifts as done in \cite{PS} and show that the class of minimal subshifts conjugate to a rank-1 subshift is dense in these spaces but is not generic in either of them. This is in contrast with the situation in the measure-theoretic setting. Nevertheless, together with the results of \cite{PS}, our results show that the class of minimal subshifts conjugate to one of symbolic rank $\leq 2$ is generic in both of these Polish coding spaces of subshifts.

We also consider topological factors of minimal subshifts of finite symbolic rank (Section~\ref{sec:7}). We improve Theorem~\ref{thm:6.9} by showing that a minimal subshift of finite topological rank $\geq 2$ must be of finite symbolic rank itself (Corollary~\ref{cor:7.4}), and is not just conjugate to a subshift of finite symbolic rank as guaranteed by Theorem~\ref{thm:6.9}. However, the symbolic rank of the subshift might be much greater than the one to which it is conjugate. We show that any infinite odometer and any irrational rotation is the maximal equicontinuous factor of a minimal subshift of symbolic rank $2$, which is in contrast with known results about rank-$1$ subshifts.

The rest of the paper is organized as follows. In Section~\ref{sec:2} we give the preliminaries on descriptive set theory, topological dynamical systems, (essentially) minimal Cantor systems, ordered Bratteli diagrams, Kakutani--Rohlin partitions, subshifts, and what it means for a subshift to have finite symbolic rank. In Section~\ref{sec:3} we compute the descriptive complexity of the classes of essentially minimal Cantor systems and those with topological rank $\leq n$ for some $n\geq 1$, by giving some topological characterizations of these classes within the Polish space of all Cantor systems. In Section~\ref{sec:5} we give a topological characterization of all minimal Cantor systems conjugate to a rank-$1$ subshift. 
In Section~\ref{sec:4} we characterize minimal subshifts of finite symbolic rank as exactly those admitting a proper finite rank construction with bounded spacer parameter. This will be a basic tool in the study of minimal subshifts of finite symbolic rank. Section~\ref{sec:6} is the main section of this paper, in which we prove the main theorems (Theorems~\ref{thm:6.7} and \ref{thm:6.9}) which clarify the relationship between the notions of symbolic rank and topological rank. We give some examples to show that our results are in some sense optimal. We also prove a result connecting the notion of finite alphabet rank for $\mathcal{S}$-adic subshifts with the notion of finite symbolic rank. This gives an alternative proof of Theorem~\ref{thm:6.9} via the main result of \cite{DDMP}. In Section~\ref{sec:new6} we consider the density and the genericity of the class of all minimal subshifts conjugate to a rank-$1$ subshift in various Polish coding spaces of Cantor systems and subshifts. Finally, in Section~\ref{sec:7} we consider topological factors of minimal subshifts of finite symbolic rank. 

{\em Acknowledgments.} We thank Fabien Durand, Felipe Garc\'{\i}a-Ramos, Samuel Petite, and Todor Tsankov for useful discussions on the topics of this paper. Particularly, we thank Samuel Petite for pointing out reference \cite{Es} to us and Felipe Garc\'{\i}a-Ramos for bringing reference \cite{PS} to our attention. We also thank the anonymous referee for useful suggestions which improved the paper and for pointing out reference \cite{DDM2000}.

\section{Preliminaries\label{sec:2}}

\subsection{Descriptive set theory}
In the rest of the paper we will be using some concepts, terminology and notation from descriptive set theory. In this subsection we review these concepts, terminology and notation, which can be found in \cite{Ke}. 

A {\em Polish space} is a topological space that is separable and completely metrizable. 

Let $X$ be a Polish space and $d_X$ be a compatible complete metric on $X$. Let $K(X)$ be the space of all compact subsets of $X$, and let $d_H$ be the {\em Hausdorff metric} defined on $K(X)$ as follows. For $A\in K(X)$ and $x\in X$, let $d(x,A)=\inf\{d(x,y)\,:\, y\in A\}$. Now for $A, B\in K(X)$, let
$$ d_H(A,B)=\max\left\{ \sup\{d(x, B)\,:\, x\in A\},\sup\{d(y,A)\,:\, y\in B\}\right\}. $$
Then $d_H$ is a metric on $K(X)$ that makes $K(X)$ a Polish space. Moreover, if $X$ is compact, then $K(X)$ is compact. 

Let $X$ be a Polish space. A subset $A$ of $X$ is {\em $G_\delta$} if $A$ is the intersection of countably many open subsets of $X$. A subspace $Y$ of $X$ is Polish iff $Y$ is a $G_\delta$ subset of $X$. We say that a subset $A$ of $X$ is {\em generic}, or the elements of $A$ are {\em generic} in $X$, if $A$ contains a dense $G_\delta$ subset of $X$.

More generally, by a transfinite induction on $1\leq \alpha<\omega_1$, we can define the {\em Borel hierarchy} on $X$ as follows:
$$\begin{array}{rcl}
{\bf\Sigma}^0_1&=& \mbox{ the collection of all open subsets of $X$} \\
{\bf\Pi}^0_1&=& \mbox{ the collection of closed subsets of $X$} \\
{\bf\Sigma}^0_\alpha&=& \left\{ \bigcup_{n\in\mathbb{N}} A_n\,:\, A_n\in {\bf\Pi}^0_{\beta_n} \mbox{ for some $\beta_n<\alpha$}\right\} \\
{\bf\Pi}^0_\alpha &=& \left\{ X\setminus A\,:\, A\in {\bf\Sigma}^0_\alpha\right\}
\end{array}
$$
We also define ${\bf\Delta}^0_\alpha={\bf\Sigma}^0_\alpha\cap{\bf\Pi}^0_\alpha$. Thus ${\bf\Delta}^0_1$ is the collection of all clopen subsets of $X$. With this notation, $\bigcup_{\alpha<\omega_1}{\bf\Sigma}^0_\alpha=\bigcup_{\alpha<\omega_1}{\bf\Pi}^0_\alpha=\bigcup_{\alpha<\omega_1}{\bf\Delta}^0_\alpha$ is the collection of all {\em Borel} subsets of $X$. The collection of all $G_\delta$ subsets of $X$ is exactly ${\bf\Pi}^0_2$.

Let $X$ be a topological space. Recall that a subset $A$ of $X$ is {\em nowhere dense} in $X$ if the interior of the closure of $A$ is empty. $A$ is {\em meager} in $X$ if $A\subseteq \bigcup_{n\in\mathbb{N}} B_n$ where each $B_n$ is nowhere dense in $X$. $A$ is {\em nonmeager} in $X$ if it is not meager in $X$; $A$ is {\em comeager} in $X$ if $X\setminus A$ is meager in $X$.

\subsection{Topological dynamical systems}
The concepts we review in this subsection are standard and can be found in any standard text on topological dynamics, e.g. \cite{Au} and \cite{Ku}. By a {\em topological dynamical system} we mean a pair $(X, T)$, where $X$ is a compact metrizable space and $T: X\to X$ is a homeomorphism. If $(X, T)$ is a topological dynamical system and $Y\subseteq X$ satisfies $TY=Y$, then $Y$ is called a {\em $T$-invariant} subset. 

If $(X, T)$ and $(Y, S)$ are topological dynamical systems and $\varphi: X\to Y$ is a continuous surjection satisfying $\varphi\circ T=S\circ \varphi$, then $\varphi$ is called a {\em factor map} and $(Y, S)$ is called a ({\em topological}) {\em factor} of $(X, T)$. If in addition $\varphi$ is a homeomorphism, then it is called a ({\em topological}) {\em conjugacy (map)} and we say that $(X, T)$ and $(Y,S)$ are ({\em topologically}) {\em conjugate}. 

If $(X, T)$ is a topological dynamical system and $\rho$ is a compatible metric on $X$, then $\rho$ is necessarily complete since $X$ is compact. Let $(X, T)$ be a topological dynamical system and fix $\rho$ a compatible metric on $X$. We say that $(X, T)$ is {\em equicontinuous} if for all $\epsilon>0$ there is $\delta>0$ such that for all $n\in\mathbb{Z}$, if $\rho(x,y)<\delta$ then $\rho(T^nx, T^ny)<\epsilon$. Since $X$ is compact, the equicontinuity is a topological notion and does not depend on the compatible metric $\rho$.

Every topological dynamical system $(X, T)$ has a {\em maximal equicontinuous factor} (or {\em MEF}), i.e., an equicontinuous factor $(Y, S)$ with the factor map $\varphi$ such that if $(Z, G)$ is another equicontinuous factor of $(X, T)$ with factor map $\psi$ then there is a factor map $\theta: (Y,S)\to (Z, G)$ such that $\psi=\theta\circ \varphi$.

If $(X, T)$ is a topological dynamical system and $x\in X$, the {\em orbit} of $x$ is defined as $\{T^kx\,:\, k\in\mathbb{Z}\}$. If $A$ is a clopen subset of $X$, the {\em return times} of $x$ to $A$ is defined as $\Ret_A(x)=\{n\in\mathbb{Z}\,:\, T^nx\in A\}$. We regard $\Ret_A(x)$ as an element of $2^{\mathbb{Z}}=\{0,1\}^\mathbb{Z}$.

\subsection{Minimal Cantor systems\label{subsec:2.3}}
Recall that a {\em Cantor space} is a zero-dimensional, perfect, compact metrizable space. Let $X$ be a Cantor space and $T: X\to X$ be a homeomorphism. Then $(X, T)$ is called a {\em Cantor system}. $T$ is {\em minimal} if every orbit is dense, i.e., for all $x\in X$, $\{T^kx\,:\, k\in\mathbb{Z}\}$ is dense in $X$. A {\em minimal Cantor system} is a pair $(X, T)$ where $X$ is a Cantor space and $T: X\to X$ is a minimal homeomorphism.

Let $\mathcal{C}=2^{\mathbb{N}}=\{0,1\}^{\mathbb{N}}$ be the infinite product of the discrete space $\{0,1\}$ with the product topology. Then every Cantor space is homeomorphic to $\mathcal{C}$. Let $d_{\mathcal{C}}$ be the canonical compatible complete metric on $\mathcal{C}$, i.e., for $x, y\in \mathcal{C}$, if $x\neq y$ then
$$ d_{\mathcal{C}}(x,y)=2^{-n},\mbox{ where $n\in\mathbb{N}$ is the least such that $x(n)\neq y(n)$.}$$
Let
$$ \Aut(\mathcal{C})=\{T\,:\, \mbox{$T$ is a homeomorphism from $\mathcal{C}$ to $\mathcal{C}$}\} $$
be equipped with the compact-open topology, or equivalently the supnorm metric, i.e., for $T, S\in \Aut(\mathcal{C})$,
$$ d(T, S)=\sup\{d_{\mathcal{C}}(Tx, Sx)\,:\, x\in \mathcal{C}\}. $$
Then $\Aut(\mathcal{C})$ is a Polish space (c.f., e.g. \cite{KR}). Let $M(\mathcal{C})$ be the set of all minimal homeomorphisms of $\mathcal{C}$. Then for a $T\in \Aut(\mathcal{C})$, $T\in M(\mathcal{C})$ iff for all nonempty clopen $U\subseteq \mathcal{C}$, there is $N\in\mathbb{N}$ such that $\mathcal{C}=\bigcup_{-N\leq n\leq N} T^nU$.
This characterization implies that $M(\mathcal{C})$ is a $G_\delta$ subset of $\Aut(\mathcal{C})$, and hence $M(\mathcal{C})$ is also a Polish space. $M(\mathcal{C})$ is our coding space for all minimal Cantor systems.

We will also consider essentially minimal Cantor systems. A Cantor system $(X, T)$ is {\em essentially minimal} if it contains a unique minimal set, i.e., a nonempty closed $T$-invariant set which is minimal among all such sets.

\subsection{Ordered Bratteli diagrams} The concepts and terminology reviewed in this subsection are from \cite{HPS}, \cite{GPS} and \cite{DM}. Some notations are from \cite{Du} and \cite{DDMP}.
Recall that a {\em Bratteli diagram} is an infinite graph $(V,E)$ with the following properties:
\begin{itemize}
\item The vertex set $V$ is decomposed into pairwise disjoint nonempty finite sets $V=V_0\cup V_1\cup V_2\cup\cdots$, where $V_0$ is a singleton $\{v_0\}$;
\item The edge set $E$ is decomposed into pairwise disjoint nonempty finite sets $E=E_1\cup E_2\cup\cdots$;
\item For any $n\geq 1$, each $e\in E_n$ connects a vertex $u\in V_{n-1}$ with a vertex $v\in V_n$. In this case we write $\mathsf{s}(e)=u$ and $\mathsf{r}(e)=v$. Thus $\mathsf{s}, \mathsf{r}: E\to V$ are maps such that $\mathsf{s}(E_n)=V_{n-1}$ and $\mathsf{r}(E_n)=V_n$ for all $n\geq 1$.
\item $\mathsf{s}^{-1}(v)\neq\varnothing$ for all $v\in V$ and $\mathsf{r}^{-1}(v)\neq\varnothing$ for all $v\in V\setminus V_0$.
\end{itemize}

An {\em ordered Bratteli diagram} is a Bratteli diagram $(V,E)$ together with a partial ordering $\preceq$ on $E$ so that edges $e$ and $e'$ are $\preceq$-comparable if and only if $\mathsf{r}(e)=\mathsf{r}(e')$. 

A finite or infinite {\em path} in a Bratteli diagram $(V, E)$ is a sequence $(e_1, e_2, \dots)$ where  $\mathsf{r}(e_i)=\mathsf{s}(e_{i+1})$ for all $i\geq 1$. 
Given a Bratteli diagram $(V, E)$ and $0\leq n<m$, let $E_{n,m}$ be the set of all finite paths connecting vertices in $V_n$ and those in $V_m$. If $p=(e_{n+1},\dots, e_m)\in E_{n,m}$, define $\mathsf{r}(p)=\mathsf{r}(e_m)$ and $\mathsf{s}(p)=\mathsf{s}(e_{n+1})$. If in addition the Bratteli diagram is partially ordered by $\preceq$, then we also define a partial ordering $p\preceq' q$ for $p=(e_{n+1},\dots, e_m), q=(f_{n+1},\dots, f_m)\in E_{n,m}$ as either $p=q$ or there exists $n+1\leq i\leq m$ such that $e_i\neq f_i$, $e_i\preceq f_i$ and $e_j=f_j$ for all $i<j\leq m$. For an arbitrary strictly increasing sequence $(n_k)_{k\geq 0}$ of natural numbers with $n_0=0$, we define the {\em contraction} or {\em telescoping} of a Bratteli diagram $(V,E)$ with respect to $(n_k)_{k\geq 0}$ as $(V',E')$ where
$V'_k=V_{n_k}$ for $k\geq 0$ and $E'_k=E_{n_{k-1}, n_k}$ for $k\geq 1$. If in addition the given Bratteli diagram is ordered, then by contraction or telescoping we also obtain an ordered Bratteli diagram $(V',E',\preceq')$ with the order $\preceq'$ defined above. The inverse of the telescoping process is called {\em microscoping}. Two ordered Bratteli diagrams are {\em equivalent} if one can be obtained from the other by a sequence of telescoping and microscoping processes.

A Bratteli diagram $(V, E)$ is {\em simple} if there is a strictly increasing sequence $(n_k)_{k\geq 0}$ of natural numbers with $n_0=0$ such that the telescoping $(V', E')$ of $(V, E)$ with respect to $(n_k)_{k\geq 0}$ satisfies that for all $n\geq 1$, $u\in V'_{n-1}$ and $v\in V'_n$, there is $e\in E'_n$ with $\mathsf{s}(e)=u$ and $\mathsf{r}(e)=v$. This is equivalent to the property that for any $n\geq 1$ there is $m>n$ such that every pair of vertices $u\in V_n$ and $v\in V_m$ are connected by a finite path. It is obvious that if a Bratteli diagram $B$ is simple, then any Bratteli diagram equivalent to it is also simple. 

Given a Bratteli diagram $B=(V,E)$, define
$$ X_B=\{(e_n)_{n\geq 1}\,:\, e_n\in E_n, \mathsf{r}(e_n)=\mathsf{s}(e_{n+1}) \mbox{ for all $n\geq 1$}\}. $$
Since $X_B$ is a subspace of the product space $\prod_{n\geq 1}E_n$, we equip $X_B$ with the subspace topology of the product topology on $\prod_{n\geq 1}E_n$. An ordered Bratteli diagram $B=(V,E,\preceq)$ is {\em essentially simple} if there are unique elements $\xmax=(e_n)_{n\geq 1}, \xmin=(f_n)_{n\geq 1}\in X_B$ such that for every $n\geq 1$, $e_n$ is a $\preceq$-maximal element and $f_n$ is a $\preceq$-minimal element. $B=(V, E, \preceq)$ is {\em simple} if $(V, E)$ is simple and $B$ is essentially simple. If an ordered Bratteli diagram $B$ is (essentially) simple, then any ordered Bratteli diagram equivalent to it is also (essentially) simple.

Given an essentially simple ordered Bratteli diagram $B=(V,E,\preceq)$, we define the {\em Vershik map} $\lambda_B: X_B\to X_B$ as follows: $\lambda_B(\xmax)=\xmin$; if $(e_n)_{n\geq 1}\in X_B$ and $(e_n)_{n\geq 1}\neq \xmax$, then let 
$$ \lambda_B((e_1,e_2,\dots, e_k, e_{k+1},\dots))=(f_1,f_2,\dots, f_k, e_{k+1},\dots), $$
where $k$ is the least such that $e_k$ is not $\preceq$-maximal, $f_k$ is the $\preceq$-successor of $e_k$, and $(f_1,\dots, f_{k-1})$ is the unique path from $v_0$ to $\mathsf{s}(f_k)=\mathsf{r}(f_{k-1})$ such that   $f_i$ is $\preceq$-minimal for each $1\leq i\leq k-1$. Then $(X_B, \lambda_B)$ is an essentially minimal Cantor system (\cite{HPS}), which we call the {\em Bratteli--Vershik system} generated by $B$. If $B=(V, E,\preceq)$ is a simple ordered Bratteli diagram and $X_B$ is infinite, then $(X_B,\lambda_B)$ is a minimal Cantor system (\cite{GPS}). If two simple ordered Bratteli diagrams are equivalent, then the Bratteli--Vershik systems generated by them are conjugate, with the conjugacy map sending $\xmin$ to $\xmin$.

An essentially minimal Cantor system $(X, T)$ is of {\em finite topological rank} if it is conjugate to a Bratteli--Vershik system given by an essentially simple ordered Bratteli diagram $(V, E, \preceq)$ where $(|V_n|)_{n\geq 1}$ is bounded by a natural number $d$. The minimum possible value of $d$ is called the {\em topological rank} of the system, and is denoted by $\toprank(X,T)$. Such Bratteli diagrams have been called {\em finite rank Bratteli diagrams} in the literature, but \cite{Du} appears to be the first place where the terminology of finite topological rank was introduced.

An essentially minimal Cantor system $(X, T)$ with topological rank $1$ is called an {\em (infinite) odometer}. It is easy to see that any ordered Bratteli diagram for such an odometer is necessarily simple, and therefore an odometer is in fact minimal. The infinite odometers coincide with all equicontinuous minimal Cantor systems.

\subsection{Kakutani--Rohlin partitions\label{subsec:2.4}} The concepts and terminology reviewed in this subsection are again from \cite{HPS} and \cite{GPS}, with some notations from \cite{DDMP}.

For an essentially minimal Cantor system $(X, T)$, a {\em Kakutani--Rohlin partition} is a partition
$$ \mathcal{P}=\{ T^jB(k)\,:\, 1\leq k\leq d,\, 0\leq j<h(k)\} $$
of clopen sets, where $d, h(1), \dots, h(d)$ are positive integers and $B(1), \dots, B(d)$ are clopen subsets of $X$ such that 
$$ \bigcup_{k=1}^d T^{h(k)}B(k)=\bigcup_{k=1}^d B(k). $$
The set $B(\mathcal{P})=\bigcup_{k=1}^d B(k)$ is called the {\em base} of $\mathcal{P}$. For $1\leq k\leq d$, the subpartition $\mathcal{P}(k)=\{T^jB(k)\,:\, 0\leq j<h(k)\}$ is the {\em $k$-th tower} of $\mathcal{P}$, which has {\em base} $B(k)$ and {\em height} $h(k)$.

The following is a basic fact regarding the construction of Kakutani--Rohlin partitions.

\begin{lemma}[\cite{HPS}, Lemma~4.1]\label{lem:2.1} Let $(X, T)$ be an essentially minimal Cantor system, $Y$ be the unique minimal set,  $y\in Y$ and $Z$ be a clopen subset of $X$ containing $y$, and let $\mathcal{Q}$ be a finite partition of $X$ into clopen sets. Then there is a Kakutani--Rohlin partition $\mathcal{P}$ such that $y\in B(\mathcal{P})=Z$ and $\mathcal{P}$ refines $\mathcal{Q}$, i.e., every element of $\mathcal{Q}$ is a union of elements of $\mathcal{P}$.
\end{lemma}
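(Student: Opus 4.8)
The plan is to build $\mathcal{P}$ out of the first-return time map to the prescribed base $Z$. First I would define, for each $x \in Z$, the first return time $r(x) = \min\{n \geq 1 : T^n x \in Z\}$, and record that for each $n$ the level set $Z_n := \{x \in Z : r(x) = n\} = Z \cap T^{-n}Z \cap \bigcap_{1 \le j < n} T^{-j}(X \setminus Z)$ is clopen. The columns $\{T^j Z_n : 0 \le j < n\}$ over the nonempty $Z_n$ will serve as the towers of $\mathcal{P}$, with bases among the $Z_n$ and heights $h(k) = n$. The whole construction hinges on two facts about $r$: that it is everywhere finite on $Z$, and that it is bounded, so that only finitely many $Z_n$ are nonempty.

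The hard part, and the only place where essential minimality is genuinely used, is showing $r < \infty$. I would argue via limit sets: for $x \in Z$ the $\omega$-limit set $\omega(x)$ is a nonempty closed $T$-invariant set, hence contains a minimal set, which by essential minimality must be the unique minimal set $Y$; since $y \in Y \subseteq \omega(x)$ and $Z$ is a clopen neighborhood of $y$, the forward orbit of $x$ meets $Z$ infinitely often, so $r(x) < \infty$. Boundedness then follows from compactness, since $Z = \bigsqcup_{n \ge 1} Z_n$ is a cover of the compact set $Z$ by pairwise disjoint clopen pieces, so all but finitely many are empty. A symmetric argument with $\alpha$-limit sets (using that $Y$ is also the unique minimal set for $T^{-1}$) shows the \emph{backward} orbit of every $x \in X$ meets $Z$; this gives each $x \in X$ a unique representation $x = T^j z$ with $z \in Z$ and $0 \le j < r(z)$, which is exactly the assertion that the columns partition $X$ and that the first-return map $T_Z(z) = T^{r(z)}z$ is a homeomorphism of $Z$. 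The latter yields the Kakutani--Rohlin relation $\bigcup_k T^{h(k)}B(k) = T_Z(Z) = Z = \bigcup_k B(k)$.

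Finally I would refine this partition to respect $\mathcal{Q}$ without disturbing the base. For each nonempty $Z_n$ I would split its base by the clopen partition $\bigvee_{0 \le j < n} T^{-j}\mathcal{Q}$ restricted to $Z_n$, i.e. declare $z \sim z'$ when $T^j z$ and $T^j z'$ lie in the same element of $\mathcal{Q}$ for every $0 \le j < n$. Each resulting clopen piece $B$ generates a column $\{B, TB, \dots, T^{n-1}B\}$ whose every level sits inside a single element of $\mathcal{Q}$, so the refined partition $\mathcal{P}$ refines $\mathcal{Q}$; as the new bases still union to $Z$, we keep $B(\mathcal{P}) = Z$ and $y \in Z = B(\mathcal{P})$. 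I expect the verification that refining the bases preserves both the base and the return-time tower structure to be routine bookkeeping; the real content is the limit-set argument establishing finiteness and boundedness of $r$.
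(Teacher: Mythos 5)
Your proposal is correct and is essentially the paper's own argument: the paper's remark after Lemma~2.1 describes exactly this canonical construction (first-return time $n_x$ to the base $Z$, clopen level sets $A_h$ as tower bases, boundedness by compactness), and the refinement step via $\bigvee_{0\le j<n}T^{-j}\mathcal{Q}$ applied to each base is the standard completion carried out in [HPS, Lemma~4.1]. The only cosmetic difference is that you establish forward (and backward) recurrence into $Z$ via $\omega$- and $\alpha$-limit sets containing the unique minimal set $Y$, whereas the paper leans on [HPS, Theorem~1.1] (that $\bigcup_{n\in\mathbb{Z}}T^nZ=X$, plus compactness) for the same fact; both routes are sound.
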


The proof of the lemma gives a canonical construction of Kakutani--Rohlin partitions. Specifically, given $y\in Y$ and clopen set $Z$ containing $y$, the function $Z\to \mathbb{N}$, $x\mapsto n_x$, where $n_x$ is the least positive integer $n$ such that $T^nx\in Z$, is continuous. Thus by the compactness of $X$, $x\mapsto n_x$ is bounded. For any $h>0$, let $A_h=\{x\in Z\,:\, n_x=h\}$. Let $h(1), \dots, h(d)$ enumerate all $h>0$ where $A_h\neq\varnothing$. Then $\{T_jA_{h(k)}\,:\, 1\leq k\leq d,\, 0\leq j<h(k)\}$ is a Kakutani--Rohlin partition with base $Z$. 

Applying Lemma~\ref{lem:2.1} repeatedly, one quickly obtains the following theorem.

\begin{theorem}[\cite{HPS}, Theorem 4.2]\label{thm:1} For any essentially minimal Cantor system $(X, T)$ and $x$ in the unique minimal set, there exist
\begin{itemize}
\item positive integers $d_n$ for $n\geq 0$, with $d_0=1$,
\item positive integers $h_n(k)$ for $n\geq 0$ and $0\leq k<d_n$, with $h_0(1)=1$,
\item Kakutani--Rohlin partitions $\mathcal{P}_n$ for $n\geq 0$, where
$$ \mathcal{P}_n=\{T^jB_n(k)\,:\, 1\leq k\leq d_n,\, 0\leq j<h_n(k)\}, $$
with $B_0(1)=X$,
\end{itemize}
such that for all $n\geq 0$,
\begin{enumerate}
\item[(1)] each $\mathcal{P}_{n+1}$ refines $\mathcal{P}_n$, 
\item[(2)] $B(\mathcal{P}_{n+1})\subseteq B(\mathcal{P}_n)$,
\item[(3)] $\bigcap_n B(\mathcal{P}_n)=\{x\}$,
\item[(4)] $\bigcup_n\mathcal{P}_n$ generates the topology of $X$.
\end{enumerate}
\end{theorem}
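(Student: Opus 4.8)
The plan is to obtain the nested sequence $(\mathcal{P}_n)_{n\geq 0}$ by iterating Lemma~\ref{lem:2.1}, arranging that each application simultaneously prescribes the base of the next partition and forces that partition to refine the previous one. To control conditions (3) and (4), I would first fix two auxiliary sequences. Since $X$ is a Cantor space and $x$ is one of its points, I would fix a decreasing sequence $(Z_n)_{n\geq 1}$ of clopen neighborhoods of $x$ with $\bigcap_{n\geq 1}Z_n=\{x\}$; this governs the shrinking of the bases. Since the clopen sets form a basis for the topology, I would also fix a sequence $(\mathcal{Q}_n)_{n\geq 1}$ of finite clopen partitions of $X$, each refining the previous, whose union $\bigcup_{n\geq 1}\mathcal{Q}_n$ generates the topology of $X$ (for instance the cylinder partitions after identifying $X$ with $\mathcal{C}$); this forces condition (4).

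I would then build the partitions by recursion on $n$. Set $\mathcal{P}_0=\{X\}$, so that $d_0=1$, $h_0(1)=1$, $B_0(1)=X$, and $B(\mathcal{P}_0)=X\ni x$. Suppose $\mathcal{P}_n$ has been constructed and satisfies the invariant $x\in B(\mathcal{P}_n)$. Then $Z\defeq Z_{n+1}\cap B(\mathcal{P}_n)$ is a clopen set containing $x$, and $\mathcal{Q}\defeq \mathcal{P}_n\vee\mathcal{Q}_{n+1}$, the coarsest common refinement of $\mathcal{P}_n$ and $\mathcal{Q}_{n+1}$, is a finite clopen partition of $X$. I would apply Lemma~\ref{lem:2.1} with $y=x$ and these choices of $Z$ and $\mathcal{Q}$ to produce a Kakutani--Rohlin partition $\mathcal{P}_{n+1}$ with $x\in B(\mathcal{P}_{n+1})=Z$ which refines $\mathcal{Q}$. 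Reading off its data yields the integers $d_{n+1}$, the heights $h_{n+1}(k)$, and the clopen bases $B_{n+1}(k)$, and the invariant $x\in B(\mathcal{P}_{n+1})$ is preserved since $x\in Z$.

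Finally I would verify the four conditions. Condition (1) holds because $\mathcal{P}_{n+1}$ refines $\mathcal{Q}=\mathcal{P}_n\vee\mathcal{Q}_{n+1}$ and hence in particular refines $\mathcal{P}_n$. Condition (2) holds since $B(\mathcal{P}_{n+1})=Z_{n+1}\cap B(\mathcal{P}_n)\subseteq B(\mathcal{P}_n)$. For condition (3), the invariant $x\in B(\mathcal{P}_n)$ gives $x\in\bigcap_n B(\mathcal{P}_n)$, while $B(\mathcal{P}_{n+1})\subseteq Z_{n+1}$ forces $\bigcap_n B(\mathcal{P}_n)\subseteq\bigcap_{n\geq 1} Z_n=\{x\}$, so equality holds. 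For condition (4), each $\mathcal{P}_{n+1}$ refines $\mathcal{Q}_{n+1}$, so the algebra of clopen sets generated by $\bigcup_n\mathcal{P}_n$ contains $\bigcup_{n\geq 1}\mathcal{Q}_n$, which already generates the topology of $X$.

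The argument is essentially bookkeeping, so I do not anticipate a serious obstacle; the one point that will require care is maintaining the invariant $x\in B(\mathcal{P}_n)$ throughout the recursion, since it is needed both to keep the base $Z=Z_{n+1}\cap B(\mathcal{P}_n)$ a nonempty clopen set (so that Lemma~\ref{lem:2.1} is applicable) and to secure the ``$\supseteq$'' direction of condition (3). The device that makes everything fit together is feeding $\mathcal{P}_n$ itself into the partition $\mathcal{Q}$ to be refined, which yields the nesting in (1) without disturbing the prescribed base demanded by (2) and (3).
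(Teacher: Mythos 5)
Your proposal is correct and is essentially the paper's own argument: the paper proves this theorem simply by remarking that one applies Lemma~\ref{lem:2.1} repeatedly, and your recursion—taking $Z=Z_{n+1}\cap B(\mathcal{P}_n)$ to force (2) and (3) and feeding the common refinement $\mathcal{P}_n\vee\mathcal{Q}_{n+1}$ into the lemma to force (1) and (4)—is exactly the intended bookkeeping, with the invariant $x\in B(\mathcal{P}_n)$ correctly maintained so that the lemma (which requires the base point to lie in the unique minimal set and in the clopen base) remains applicable at every stage.
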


We call the system of Kakutani--Rohlin partitions in Theorem~\ref{thm:1} a {\em nested system}. From such a system we define an ordered Bratteli diagram following \cite{HPS}. For each $n\geq 0$, let
$$ V_n=\{\mathcal{P}_n(k)\,:\, 1\leq k\leq d_n\}. $$
For $n\geq 1$, $1\leq k\leq d_n$, $1\leq \ell\leq d_{n-1}$ and $0\leq j<h_n(k)$, there is an edge $e_j\in E_n$ connecting $\mathcal{P}_n(k)$ to $\mathcal{P}_{n-1}(\ell)$ if $T^jB_n(k)\subseteq \bigcup_{0\le i< h_n(\ell)}T^iB_{n-1}(\ell)$. Then, if $e_{j_1}, \dots, e_{j_m}$ are all edges in $E_n$ connecting $\mathcal{P}_n(k)$ to some element of $V_{n-1}$, we set the partial ordering $\preceq$ by letting $e_j\preceq e_{j'}$ iff $j\leq j'$.
It was proved in \cite{HPS} that this ordered Bratteli diagram is essentially simple and that the Bratteli--Vershik system generated by this ordered Bratteli diagram is conjugate to $(X, T)$, with the conjugacy map sending $\xmin$ to $x$. If in addition $(X, T)$ is a minimal Cantor system, then the resulting ordered Bratteli diagram is necessarily simple.

Thus we have described a procedure to obtain an ordered Bratteli diagram given an essentially minimal Cantor system $(X, T)$ and a point $x$ in the unique minimal set. It was proved in \cite{HPS} that the equivalence class of the ordered Bratteli diagram does not depend on the choice of the Kakutani--Rohlin partitions in the procedure, i.e., all ordered Bratteli diagrams obtained through this procedure are equivalent.

Conversely, if $B=(V,E,\preceq)$ is an essentially simple ordered Bratteli diagram and $(X_B, \lambda_B)$ is the Bratteli--Vershik system generated by $B$, then there is a nested system of Kakutani--Rohlin partitions for $(X_B,\lambda_B)$ and $\xmin$ such that the ordered Bratteli diagram $B'$ defined above is equivalent to $B$. Thus, if an essentially minimal Cantor system $(X, T)$ has finite topological rank $d$, then there is a nested system of Kakutani--Rohlin partitions $\{\mathcal{P}_n\}_{n\geq 1}$ where $d_n=d$ for all $n\geq 1$.

\subsection{Subshifts}
The concepts and notation reviewed in this subsection are from \cite{GH} and \cite{GJJLLSW}. 

By a {\em finite word} we mean an element of $2^{<\mathbb{N}}=\{0,1\}^{<\mathbb{N}}=\bigcup_{N\in\mathbb{N}}\{0,1\}^N$. If $v$ is a finite word, we regard it as a function with domain $\{0,1,\dots, N-1\}$ for some $N\in\mathbb{N}$, and call $N$ its {\em length}, denoted as $|v|=N$. The {\em empty word} is the unique finite word with length $0$ (or the empty domain), and we denote it as $\varnothing$. If $v$ is a finite word and $s,t$ are integers such that $0\leq s\leq t\leq |v|-1$, then $v\rest[s,t]$ denotes the finite word $u$ of length $t-s+1$ where for $0\leq i<t-s+1$, $u(i)=v(s+i)$; $v\rest s$ denotes $v\rest[0,s]$, and is called a {\em prefix} or an {\em initial segment} of $v$. An {\em end segment} or a {\em suffix} of $v$ is $v\rest [s, |v|-1]$ for some $0\leq s\leq |v|-1]$. The empty word is both a prefix and a suffix of any word. Any word is also both a prefix and a suffix of itself. If $u, v$ are finite words, then $uv$ denotes the finite word $w$ of length $|u|+|v|$ where $w\rest [0,|u|-1]=u$ and $w\rest[|u|,|u|+|v|-1]=v$. For finite words $u, v$ with $|u|\leq |v|$, we say that $u$ is a {\em subword} of $v$ if there is $0\leq s\leq |v|-|u|$ such that $u=v\rest[s,s+|u|-1]$; when this happens we also say that $u$ {\em occurs} in $v$ at position $s$.

An {\em infinite word} is an element of $2^{\mathbb{N}}$, and a {\em bi-infinite word} is an element of $2^{\mathbb{Z}}$. For any infinite word $V\in 2^\mathbb{N}$ and integers $s, t$ with $0\leq s\leq t$, the notions $V\rest[s,t]$, $V\rest s$, finite subwords and their occurrences are similarly defined. For any bi-infinite word $x\in 2^\mathbb{Z}$ and integers $s, t$ with $s\leq t$, the notions $x\rest[s,t]$, finite subwords and their occurrences are also similarly defined.

We consider the {\em Bernoulli shift} on $2^{\mathbb{Z}}=\{0,1\}^{\mathbb{Z}}$, which is the homeomorphism $\sigma: 2^{\mathbb{Z}}\to 2^{\mathbb{Z}}$ defined by
$$ \sigma(x)(n)=x(n+1). $$
Since $2^{\mathbb{Z}}$ is homeomorphic to $\mathcal{C}=2^{\mathbb{N}}$, $(2^{\mathbb{Z}}, \sigma)$ is a Cantor system. A {\em subshift} $X$ is a closed $\sigma$-invariant subset of $2^{\mathbb{Z}}$. By a subshift we also refer to the Cantor system $(X, \sigma\rest X)$ or simply $(X,\sigma)$ when there is no danger of confusion. 

The following simple fact is a folklore.

\begin{lemma}\label{lem:2.4} An infinite subshift is not equicontinuous. In particular, it is not conjugate to any infinite odometer.
\end{lemma}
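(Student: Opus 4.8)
The plan is to argue by contradiction, using the fact that the shift is \emph{expansive} with respect to the natural metric on $2^{\mathbb{Z}}$, which is in direct tension with equicontinuity. Since equicontinuity is a topological property and does not depend on the choice of compatible metric, I am free to work with the convenient metric on $2^{\mathbb{Z}}$ defined, for $x\neq y$, by $d(x,y)=2^{-n}$ with $n=\min\{|k|:x(k)\neq y(k)\}$ (this is the analogue on $2^{\mathbb{Z}}$ of $d_{\mathcal{C}}$). The key translation I will use repeatedly is purely combinatorial: for each integer $m\geq 0$, the inequality $d(x,y)<2^{-m}$ holds if and only if $x$ and $y$ agree on the central window $\{k:|k|\leq m\}$, i.e. $x\rest[-m,m]=y\rest[-m,m]$; and in particular $d(u,v)<1$ holds if and only if $u(0)=v(0)$.

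Next I would assume, toward a contradiction, that the infinite subshift $(X,\sigma)$ is equicontinuous and apply the definition with $\epsilon=1$. This yields $\delta>0$ such that $d(x,y)<\delta$ implies $d(\sigma^n x,\sigma^n y)<1$ for all $n\in\mathbb{Z}$. By the translation above, $d(\sigma^n x,\sigma^n y)<1$ says exactly that $(\sigma^n x)(0)=(\sigma^n y)(0)$, that is, $x(n)=y(n)$. Hence $d(x,y)<\delta$ forces $x(n)=y(n)$ for every $n$, i.e. $x=y$. Fixing $m$ large enough that $2^{-m}\leq\delta$, any two points of $X$ that agree on $\{k:|k|\leq m\}$ lie within distance $<2^{-m}\leq\delta$ and are therefore equal. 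Consequently the map $x\mapsto x\rest[-m,m]$ is an injection of $X$ into the finite set $\{0,1\}^{2m+1}$, so $|X|\leq 2^{2m+1}<\infty$, contradicting the assumption that $X$ is infinite. This establishes the first assertion.

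For the ``in particular'' clause, I would invoke that equicontinuity is a conjugacy invariant: if $\varphi$ is a conjugacy from $(X,\sigma)$ onto an equicontinuous system, then, because $\varphi$ and $\varphi^{-1}$ are uniformly continuous on the compact space $X$ and the conjugated iterates satisfy $\sigma^n=\varphi^{-1}\circ S^n\circ\varphi$, the family $\{\sigma^n\}$ is equicontinuous as well. Since every infinite odometer is equicontinuous (as recalled in Subsection~\ref{subsec:2.3}), an infinite subshift conjugate to an infinite odometer would be equicontinuous, contradicting the first part. I do not expect a genuine obstacle here; the only points requiring care are the justification that equicontinuity is metric-independent (so that the shift metric may be used) and the faithful translation of the $\epsilon$--$\delta$ condition into a statement about central blocks. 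Both are routine, so the proof should be short.
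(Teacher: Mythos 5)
Your proof is correct and complete: the paper states Lemma~\ref{lem:2.4} as folklore without proof, and your argument --- playing the expansivity of the shift against equicontinuity (with $\epsilon=1$, any two points within $\delta$ must agree at coordinate $0$ under every power of $\sigma$, hence coincide, so $x\mapsto x\rest[-m,m]$ injects $X$ into the finite set $\{0,1\}^{2m+1}$ once $2^{-m}\leq\delta$), then handling the ``in particular'' clause via the conjugacy invariance of equicontinuity on compact spaces and the fact that infinite odometers are equicontinuous --- is exactly the standard argument the authors have in mind. The two points you flag as needing care (metric-independence of equicontinuity, which the paper itself records in Subsection~\ref{subsec:2.3}, and the translation of the metric into agreement on central blocks) are handled correctly, so there is nothing to add.
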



If $V\in 2^\mathbb{N}$ is an infinite word, let
$$ X_V=\{x\in 2^\mathbb{Z}\,:\, \mbox{ every finite subword of $x$ is a subword of $V$}\}. $$
Then $(X_V, \sigma)$ is a subshift and we call it the subshift {\em generated by} $V$. For any $V\in 2^\mathbb{N}$, $X_V$ is always nonempty. Note that for any $x\in X_V$ and finite subword $u$ of $x$, $u$ must occur in $V$ infinitely many times. We say that $V$ is {\em recurrent} if every finite subword of $V$ occurs in $V$ infinitely many times. When $V$ is recurrent, $X_V$ is either finite or a Cantor set, and $X_V$ is finite iff $V$ is {\em periodic}, i.e., there is a finite word $v$ such that $V=vvv\cdots$. Thus an infinite subshift generated by a recurrent $V$ is a Cantor system.

It is well known that all infinite odometers form a dense $G_\delta$ in the space $M(\mathcal{C})$ of all minimal Cantor systems. We give a proof of this fact in Corollary~\ref{cor:3.4} and Proposition~\ref{prop:odometerdense}.

\subsection{Subshifts of finite symbolic rank}
Some of the concepts and notation reviewed in this subsection are from \cite{GH} and \cite{GJJLLSW}, and some are new. 

Subshifts of finite symbolic rank are defined from infinite words of finite symbolic ranked constructions, whose definitions are inspired by the cutting-and-stacking processes that were used to construct measure-preserving transformations of finite rank (\cite{Fe}). We first define (symbolic) rank-1 subshifts, which are also called {\em Ferenczi subshifts} in \cite{AD} to honor the fact that Ferenczi popularized the concept in \cite{Fe}.

An infinite (symbolic) rank-1 word $V$ is defined as follows. Given a sequence of positive integers $\{r_n\}_{n\geq 0}$ with $r_n>1$ for all $n\geq 0$ (called the {\em cutting parameter}) and a doubly-indexed sequence of nonnegative integers $\{s_{n,i}\}_{n\geq 0, 1\leq i<r_n}$ (called the {\em spacer parameter}), a {\em (symbolic) rank-1 generating sequence} given by the parameters is the recursively defined sequence of finite words:
$$ \begin{array}{rcl} v_0&=& 0,\\ v_{n+1}&=&v_n1^{s_{n,1}}v_n\cdots v_n1^{s_{n,r_n-1}}v_n. \end{array} $$
Since $v_n$ is a prefix of $v_{n+1}$, it makes sense to define $V=\lim_n v_n$. This $V$ is called a {\em (symbolic) rank-1 word} and $X_V$ is called a {\em (symbolic) rank-1 subshift}. 

To generalize and define (symbolic) rank-$n$ subshifts we use the following concepts and notation. Let $\mathcal{F}$ be the set of all finite words in $2^{<\mathbb{N}}$ that begin and end with $0$. For a finite set $S\subseteq \mathcal{F}$ and finite word $w\in \mathcal{F}$, a {\em building} of $w$ from $S$ consists of a sequence $(v_1,\dots, v_{k+1})$ of elements of $S$ and a sequence $(s_1,\dots, s_k)$ of nonnegative integers for $k\geq 1$ such that
$$ w=v_11^{s_1}v_2\cdots v_k1^{s_k}v_{k+1}. $$
The sequence $(s_1,\dots, s_k)$ is called the {\em spacer parameter} of the building; it is {\em bounded by} $M$ if $s_1,\dots, s_k\leq M$. We say that {\em every word of $S$ is used} in this building if $\{v_1,\dots, v_{k+1}\}=S$. When there is a building of $w$ from $S$, we also say that $w$ is {\em built from} $S$; when the building consists of $(v_1,\dots, v_{k+1})$ and $(s_1,\dots, s_k)$, we also say that $w$ is built from $S$ {\em starting with} $v_1$. These notions can be similarly defined when the finite word $w$ is replaced by an infinite word $W$.

For $n\geq 1$, a {\em (symbolic) rank-$n$ generating sequence} is a doubly-indexed sequence $\{v_{i,j}\}_{i\geq 0, 1\leq j\leq n_i}$ of finite words satisfying, for all $i\geq 0$,
\begin{itemize}
\item $n_i\leq n$,
\item $v_{0,j}=0$ for all $1\leq j\leq n_0$,
\item $v_{i+1,1}$ is built from $S_i\triangleq\{v_{i,1},\dots, v_{i, n_i}\}$ starting with $v_{i,1}$,
\item $v_{i+1,j}$ is built from $S_i$ for all $2\leq j\leq n_{i+1}$.
\end{itemize}
A {\em (symbolic) rank-$n$ construction} is the (symbolic) rank-$n$ generating sequence $\{v_{i,j}\}_{i\geq 0, 1\leq j\leq n_i}$ together with exactly one building of $v_{i+1,j}$ from $S_i$ (for $v_{i+1,1}$, the building should start with $v_{i,1}$) for all $i\geq 0, 1\leq j\leq n_i$. We call $S_i$ the {\em $i$-th level} of the construction. The {\em spacer parameter} of the rank-$n$ construction is the collection of all spacer parameters of all the buildings in the construction; it is {\em bounded} if there is an $M>0$ such that all the spacer parameters of all the buildings in the construction are bounded by $M$. The (symbolic) rank-$n$ construction is {\em proper} if for all $i\geq 0$, $n_i=n$ and for all $1\leq j\leq n$, every word of $S_i$ is used in the building of each $v_{i+1,j}$. Since each $v_{i,1}$ is a prefix of $v_{i+1,1}$, it makes sense to define $V=\lim_{i}v_{i,1}$. 

Given a rank-$n$ construction with associated rank-$n$ generating sequence $\{v_{i,j}\}_{i\geq 0, 1\leq j\leq n_i}$, we define the set of all {\em expected subwords} of $v_{i,j}$, for $i\geq 0$ and $1\leq j\leq n_i$, inductively as follows: for each $v_{0,j}$, the set of all of its expected subwords is $\{v_{0,j}\}=\{0\}$; for $i\geq 0$, the set of all expected subwords of $v_{i+1,j}$ consists of
\begin{itemize}
\item $v_{i+1,j}$,
\item $u_1, \dots, u_{k+1}\in S_i$, where $(u_1,\dots, u_{k+1})$ and $(a_1,\dots, a_k)$ give the building of $v_{i+1,j}$ from $S_i$,
\item all expected subwords of $u_1,\dots, u_{k+1}\in S_i$.
\end{itemize}
Finally define the set of all {\em expected subwords} of $V=\lim_i v_{i,1}$ to be the union of the sets of all expected subwords of $v_{i,1}$ for all $i\geq 0$. Without loss of generality, we may assume that for all $i\geq 0$, all finite words in $S_i$ are expected subwords of $V$. It follows immediate from the construction that for all $i\geq 0$, the infinite word $V$ is built from $S_i$ starting with $v_{i,1}$.

Let $w\in \mathcal{F}$ and $S, T\subseteq\mathcal{F}$ are finite. Suppose $w$ is built from $S$ and that every word in $S$ is built from $T$. Then by {\em composing} the building of $w$ from $S$ with the buildings of each element of $S$ from $T$, we obtain a building of $w$ from $T$, and thus $w$ is also built from $T$. Given a rank-$n$ construction with associated rank-$n$ generating sequence $\{v_{i,j}\}_{i\geq 0, 1\leq j\leq n_i}$, and given $i<i'$, for all $1\leq j'\leq n_{i'}$ we obtain a building of $v_{i',j'}$ from $S_i$ by composing the buildings of elements of $S_{\iota}$ from $S_{\iota-1}$ for all $i+1\leq \iota\leq i'$. With this repeated composition process, we may obtain,  for any increasing sequence $\{i_k\}_{k\geq 0}$ with $i_0=0$, a rank-$n$ construction with associated rank-$n$ generating sequence $\{v_{i_k,j}\}_{k\geq 0, 1\leq j\leq n_{i_k}}$. Since $\lim_i v_{i,1}=\lim_{k}v_{i_k,1}$, the resulting infinite words are the same. We call this process {\em telescoping}. 

An infinite word $V$ is called a {\em (symbolic) rank-$n$ word} if it has a rank-$n$ construction but not a rank-$(n-1)$ construction. A subshift $X$ has {\em finite symbolic rank} if for some $n\geq 1$, $X=X_V$ where $V$ has a rank-$n$ construction; the smallest such $n$ is called the {\em symbolic rank} of $X$, and is denoted $\symbrank(X)=\symbrank(X, \sigma)$. 

By definition, if $\symbrank(X)=n$ then there is a rank-$n$ word $V$ such that $X=X_V$. 

Before closing this subsection, we give some examples. The best known example of a rank-1 generating sequence is the one coding the Chac\'{o}n transformation:
$$ v_0=0;\ v_{n+1}=v_nv_n1v_n. $$
The Morse sequence is the infinite 0,1-word generated by the Thue-Morse substitution $0\mapsto 01$ and $1\mapsto 10$. The subshift generated by the Morse sequence is an example of a minimal subshift of symbolic rank $2$. More generally, any $\mathcal{S}$-adic subshift of finite alphabet rank for which the initial alphabet $A_0=\{0,1\}$ (defined in Subsection~\ref{subsec:6.4}) is a natural example of subshifts of finite symbolic rank, where the symbolic rank is no more than the alphabet rank. Our main results in this paper will show that any minimal $\mathcal{S}$-adic subshift of finite  alphabet rank is conjugate to a minimal subshift of finite symbolic rank. 

Finally we turn to symbolic codings of interval exchange transformations, known as {\em IET subshifts}. When there are two intervals of irrational lengths, the interval exchange transformations are exactly irrational rotations and the corresponding IET subshifts are generated by Sturmian words. These subshifts have symbolic rank 2 (\cite{GJJLLSW}). In general, when there are more than two intervals and if the corresponding IET subshifts are minimal, they are known to have finite topological rank, and by our main results in this paper, they are conjugate to a minimal subshift of finite symbolic rank. If one considers the quotient systems of these subshifts where one of the intervals is coded by $0$ and the rest are coded by $1$, then the subshifts are natural examples of subshifts of finite symbolic rank.

\section{Some computations of descriptive complexity\label{sec:3}}

In this section we compute the descriptive complexity of various classes of Cantor systems. We first show that the class of all essentially minimal Cantor systems is a $G_\delta$ subset of $\Aut(\mathcal{C})$. Then we give a characterization of all essentially minimal Cantor systems with bounded topological rank. As a consequence, we show that for each $n\geq 1$, the class of all essentially minimal Cantor systems of topological rank $\leq n$ is a $G_\delta$ subset of $\Aut(\mathcal{C})$. This implies that for each $n\geq 1$, the class of all minimal Cantor systems of topological rank $\leq n$ is a $G_\delta$ subset of $M(\mathcal{C})$. 

We first give a characterization of essential minimality for a Cantor system $(X, T)$. We say a subset $A$ of $X$ has the {\em finite covering property} if there is some $N\in\mathbb{N}$ such that $\bigcup_{-N\leq n\leq N}T^nA=X$.

\begin{proposition}\label{thm:3.1} Let $(X, T)$ be a Cantor system and let $\rho\leq 1$ be a compatible metric on $X$. Then the following are equivalent:
\begin{enumerate}
\item $(X, T)$ is essentially minimal.
\item For any clopen set $A$ of $X$, if $A$ has the finite covering property then there is a clopen subset $B$ of $A$ with the finite covering property such that $\diam(B)\leq \diam(A)/2$.
\end{enumerate}
\end{proposition}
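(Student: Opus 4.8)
The plan is to prove the two implications separately, organized around a clean description of which clopen sets have the finite covering property in an essentially minimal system. Concretely, I would first establish the following claim: \emph{if $(X,T)$ is essentially minimal with unique minimal set $Y$, then a clopen set $A$ has the finite covering property if and only if $A\cap Y\neq\varnothing$.} The forward implication is immediate, for if $\bigcup_{|n|\le N}T^nA=X$ then intersecting with the $T$-invariant set $Y$ and using $T^nA\cap Y=T^n(A\cap Y)$ gives $Y=\bigcup_{|n|\le N}T^n(A\cap Y)$, whence $A\cap Y\neq\varnothing$.

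For the converse I would argue in two stages. Since $A\cap Y$ is a nonempty relatively clopen subset of the minimal set $Y$, we have $\bigcup_{n\in\mathbb{Z}}T^n(A\cap Y)=Y$, and compactness of $Y$ yields an $N_0$ with $Y\subseteq W:=\bigcup_{|n|\le N_0}T^nA$, a clopen neighborhood of $Y$. The crucial step is to upgrade this covering of $Y$ to a covering of all of $X$: for any $x\in X$, the orbit closure $\overline{O(x)}$, where $O(x)=\{T^kx:k\in\mathbb{Z}\}$, is a nonempty closed $T$-invariant set and hence contains a minimal set, which by \emph{uniqueness} must be $Y$; since $W$ is open and meets $Y\subseteq\overline{O(x)}$, it meets the dense orbit $O(x)$, so $x\in\bigcup_nT^nW$. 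Thus $\bigcup_nT^nW=X$, and one more application of compactness produces a uniform bound, giving the finite covering property of $A$. This passage from the pointwise statement ``every orbit enters $W$'' to a uniform covering bound, and in particular the invocation of uniqueness of the minimal set to guarantee $Y\subseteq\overline{O(x)}$ for every $x$, is the main obstacle and the one place where essential minimality is genuinely used.

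With the claim in hand, $(1)\Rightarrow(2)$ is short: given a clopen $A$ with the finite covering property, pick $y\in A\cap Y$, and using zero-dimensionality choose a clopen set $C\ni y$ with $\diam(C)\le\diam(A)/2$; setting $B=A\cap C$ gives a clopen $B\subseteq A$ with $\diam(B)\le\diam(A)/2$ and $y\in B\cap Y$, so by the claim $B$ has the finite covering property.

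For $(2)\Rightarrow(1)$ the claim is not needed and I would argue directly. Since $X$ itself has the finite covering property (take $N=0$), iterating (2) produces a nested sequence of nonempty clopen sets $X=A_0\supseteq A_1\supseteq A_2\supseteq\cdots$, each with the finite covering property and with $\diam(A_k)\le 2^{-k}$ (using $\rho\le 1$); here nonemptiness is automatic, as a set with the finite covering property cannot be empty. Hence $\bigcap_kA_k=\{z\}$ for a single point $z$. Now let $M$ be any minimal set: since $A_k$ has the finite covering property, $\bigcup_{|n|\le N}T^nA_k=X\supseteq M$ for some $N$, and intersecting with $M$ gives $M=\bigcup_{|n|\le N}T^n(A_k\cap M)$, forcing $A_k\cap M\neq\varnothing$. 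Choosing $x_k\in A_k\cap M$ we get $x_k\to z$, so $z\in M$ by closedness of $M$. Thus every minimal set contains $z$; since distinct minimal sets are disjoint while at least one minimal set exists by compactness, there is exactly one, and $(X,T)$ is essentially minimal.
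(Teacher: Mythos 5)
Your proof is correct, and it takes a genuinely different route from the paper's. The paper proves both implications by invoking Theorem~1.1 of \cite{HPS} as a black box: for (1)$\Rightarrow$(2) it picks a point $x$ in the unique minimal set, translates $A$ so that some $T^nA$ contains $x$, shrinks to a small clopen neighborhood $Y$ of $x$, and cites \cite{HPS} to get $\bigcup_{k\in\mathbb{Z}}T^kY=X$; for (2)$\Rightarrow$(1) it builds the same nested sequence you do, observes that every clopen neighborhood of the limit point has the finite covering property, and cites \cite{HPS} again to conclude essential minimality. You instead prove the relevant characterization from scratch: your claim that, in an essentially minimal system with unique minimal set $Y$, a clopen set has the finite covering property if and only if it meets $Y$, is essentially the content of the cited HPS result, and your argument for it (orbit closures contain a minimal set, which by uniqueness is $Y$, so the clopen neighborhood $W\supseteq Y$ is visited by every orbit, then compactness uniformizes) is exactly the standard proof. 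This claim also streamlines (1)$\Rightarrow$(2): since $A\cap Y\neq\varnothing$ you can shrink around a point of $A\cap Y$ directly, avoiding the paper's translation by $T^n$ and pull-back. Your (2)$\Rightarrow$(1) is likewise self-contained where the paper is not: rather than re-citing \cite{HPS}, you show directly that every minimal set $M$ meets each $A_k$ (by intersecting the finite cover with $M$) and hence contains the limit point $z$, so that disjointness of distinct minimal sets forces uniqueness. The trade-off is clear: the paper's proof is shorter by leaning on a known theorem, while yours is elementary and self-contained, at the cost of reproving that theorem; both are complete and correct, including the small points you handle explicitly (nonemptiness of sets with the finite covering property, existence of a minimal set by compactness and Zorn's lemma, and the singleton intersection from $\diam(A_k)\leq 2^{-k}$).
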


\begin{proof} First assume $(X, T)$ is essentially minimal. Suppose $A$ is a clopen subset of $X$ with the finite covering property, i.e. for some $N\in\mathbb{N}$ we have $\bigcup_{-N\leq n\leq N}T^nA=X$. Let $x$ be an arbitrary element of the unique minimal set of $X$. Then for some $-N\leq n\leq N$, $x\in T^nA$, where $T^nA$ is still clopen. Let $Y$ be a clopen subset of $T^nA$ containing $x$ such that $\diam(T^{-n}Y)\leq \diam(A)/2$. By Theorem 1.1 of \cite{HPS}, $\bigcup_{k\in\mathbb{Z}}T^kY=X$. By the compactness of $X$, $Y$ has the finite covering property. Let $B=T^{-n}Y$. Then $B\subseteq A$, $\diam(B)\leq \diam(A)/2$ and $B$ also has the finite covering property. 

Conversely, assume (2) holds. Starting with $A_0=X$ and repeatedly applying (2), we obtain a decreasing sequence $\{A_n\}_{n\geq 0}$ of clopen subsets of $X$ such that $\diam(A_n)\leq 2^{-n}$ and each $A_n$ has the finite covering property. Let $x$ be the unique element of $\bigcap_n A_n$. Then any clopen subset $B$ of $X$ containing $x$ has the finite covering property. By Theorem 1.1 of \cite{HPS}, $(X, T)$ is essentially minimal.
\end{proof}

Let $E(\mathcal{C})$ be the set of all essentially minimal homeomorphisms of $\mathcal{C}$. 

\begin{corollary} $E(\mathcal{C})$ is a $G_\delta$ subset of $\Aut(\mathcal{C})$, hence is a Polish space.
\end{corollary}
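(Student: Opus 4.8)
The plan is to convert the characterization of essential minimality given in Proposition~\ref{thm:3.1} into a descriptive-complexity estimate. The point that makes everything work is that, for each fixed clopen set, the finite covering property is an \emph{open} condition on $T$.

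First I would fix notation. For a clopen set $A\subseteq\mathcal{C}$ and $N\in\mathbb{N}$, put
\[
W_{A,N}=\Big\{T\in\Aut(\mathcal{C})\,:\,\bigcup_{-N\le n\le N}T^nA=\mathcal{C}\Big\},
\]
so that the set $\mathrm{FCP}(A):=\{T : A$ has the finite covering property$\}$ equals $\bigcup_{N}W_{A,N}$. The central claim is that each $W_{A,N}$ is clopen. To prove it I would use that $\Aut(\mathcal{C})$ is a topological group, so that $T\mapsto T^{n}$ is continuous for every $n\in\mathbb{Z}$, together with the fact that a clopen $A$ depends on only finitely many coordinates, say on coordinates $<m$. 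Given $T$, continuity of $S\mapsto S^{-n}$ yields a neighborhood $U$ of $T$ on which $d(S^{-n},T^{-n})<2^{-m}$ for all $\lvert n\rvert\le N$; then $S^{-n}x$ and $T^{-n}x$ agree on coordinates $<m$ for every $x\in\mathcal{C}$, whence $S^{n}A=T^{n}A$ for all $\lvert n\rvert\le N$ and all $S\in U$. Thus $T\mapsto\bigcup_{\lvert n\rvert\le N}T^nA$ is locally constant, so $W_{A,N}$ and its complement are both open. Consequently $\mathrm{FCP}(A)$ is open and $\Aut(\mathcal{C})\setminus\mathrm{FCP}(B)$ is closed, for every clopen $A,B$.

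With this in hand, the remainder is Borel-hierarchy bookkeeping. Since $\mathcal{C}$ has only countably many clopen subsets, negating condition~(2) of Proposition~\ref{thm:3.1} gives
\[
\Aut(\mathcal{C})\setminus E(\mathcal{C})=\bigcup_{A}\Big(\mathrm{FCP}(A)\cap\bigcap_{B}\big(\Aut(\mathcal{C})\setminus\mathrm{FCP}(B)\big)\Big),
\]
where $A$ ranges over clopen subsets of $\mathcal{C}$ and, for each $A$, the index $B$ ranges over the countably many clopen $B\subseteq A$ with $\diam(B)\le\diam(A)/2$. For each $A$ the inner set is the intersection of an open set with a closed set (a countable intersection of the closed sets $\Aut(\mathcal{C})\setminus\mathrm{FCP}(B)$), hence is $F_\sigma$; and a countable union of $F_\sigma$ sets is again $F_\sigma$. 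Therefore $\Aut(\mathcal{C})\setminus E(\mathcal{C})$ is $F_\sigma$, so $E(\mathcal{C})$ is $G_\delta$, and being a $G_\delta$ subspace of a Polish space it is Polish.

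The main obstacle is the clopenness of $W_{A,N}$: Hausdorff convergence of $T^nA$ alone is too weak, since two clopen sets can be Hausdorff-close yet unequal, so the argument must upgrade approximate agreement to exact equality of the sets $T^nA$. This is exactly where the finite-coordinate dependence of $A$ and the continuity of inversion in $\Aut(\mathcal{C})$ are used. The only other point requiring care is to keep the negated condition at the level of open-intersect-closed sets, so that the final countable union does not escape $F_\sigma$; note in particular that no appeal to the (upward) monotonicity of the finite covering property is needed.
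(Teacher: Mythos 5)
Your proof is correct and takes essentially the same route as the paper: the paper's entire argument is the observation that, for each fixed clopen $A$, the finite covering property of $A$ is an open condition on $T\in\Aut(\mathcal{C})$, so that condition (2) of Proposition~\ref{thm:3.1} is a $G_\delta$ condition. Your local-constancy argument for the sets $W_{A,N}$ and the $F_\sigma$ bookkeeping for the complement merely make explicit (correctly) what the paper's one-line proof leaves implicit.
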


\begin{proof} Note that for any clopen subset $A$ of $\mathcal{C}$, $A$ has the finite covering property for $(\mathcal{C}, T)$ is an open condition for $T\in\Aut(\mathcal{C})$. Thus condition (2) of Proposition~\ref{thm:3.1} gives a $G_\delta$ condition for $T\in \Aut(\mathcal{C})$.
\end{proof}

We next give a characterization of essentially minimal Cantor systems of bounded topological rank.

\begin{theorem}\label{thm:3.3} Let $(X, T)$ be an essentially minimal Cantor system, $\rho\leq 1$ be a compatible complete metric on $X$, and $n\geq 1$. The following are equivalent:
\begin{enumerate}
\item[(1)] $(X, T)$ has topological rank $\leq n$.
\item[(2)] There exists $x\in X$ such that for all $\epsilon>0$, there is a Kakutani--Rohlin partition $\mathcal{P}$ with no more than $n$ many towers such that $\diam(A)<\epsilon$ for all $A\in\mathcal{P}$, $\diam(B(\mathcal{P}))<\epsilon$, and $x\in B(\mathcal{P})$.
\item[(3)] For any clopen subset $Z$ of $X$ with the finite covering property, for any finite partition $\mathcal{Q}$ into clopen sets, there is a Kakutani--Rohlin partition $\mathcal{P}$ with no more than $n$ many towers such that $B(\mathcal{P})\subseteq Z$, $\diam(B(\mathcal{P}))\leq \diam(Z)/2$ and $\mathcal{P}$ refines $\mathcal{Q}$.
\end{enumerate}
\end{theorem}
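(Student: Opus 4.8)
The plan is to prove the cyclic chain $(1)\Rightarrow(2)\Rightarrow(3)\Rightarrow(1)$. Throughout let $Y$ be the unique minimal set. Two elementary observations are used repeatedly. First, the base $B(\mathcal{P})$ of any Kakutani--Rohlin partition has the finite covering property, since $X$ is covered by the finitely many levels $T^jB(\mathcal{P})$, $0\le j<\max_k h(k)$. Second, if $\mathcal{P}$ is a Kakutani--Rohlin partition and $k\in\mathbb{Z}$, then $T^k\mathcal{P}$ is again a Kakutani--Rohlin partition, with base $T^kB(\mathcal{P})$, the same heights, and the same number of towers; this \emph{shift} is the device that relocates a base into a prescribed clopen set without increasing the number of towers. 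With these in hand, $(1)\Rightarrow(2)$ is immediate: by the last paragraph of Subsection~\ref{subsec:2.4}, topological rank $\le n$ yields a nested system $\{\mathcal{P}_m\}$ with at most $n$ towers for every $m$, whose bases decrease to a point $x_0\in Y$ and whose atoms generate the topology; hence the mesh of $\mathcal{P}_m$ and $\diam(B(\mathcal{P}_m))$ tend to $0$, and for each $\epsilon$ a sufficiently late $\mathcal{P}_m$ witnesses (2) with $x=x_0$.

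The crux is $(2)\Rightarrow(3)$. Fix the point $x$ from (2). Since the bases of the partitions furnished by (2) have the finite covering property and shrink to $x$, every clopen neighbourhood of $x$ has the finite covering property; consequently $x\in Y$ (if some clopen $U\ni x$ were disjoint from $Y$, its finite covering property $X=\bigcup_{|j|\le N}T^jU$ would force $Y=T^{-j}Y$ to meet $U$ for some $j$, a contradiction). Now let $Z$ (with the finite covering property) and $\mathcal{Q}$ be given. Because $Z$ has the finite covering property, $Z\cap Y\neq\varnothing$, and since the orbit of $x$ is dense in $Y$ we may fix $k\in\mathbb{Z}$ with $T^kx\in Z$. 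Applying (2) with $\epsilon$ small, choose a Kakutani--Rohlin partition $\mathcal{S}$ with $\le n$ towers and $x\in B(\mathcal{S})$ whose atoms refine $T^{-k}\mathcal{Q}$ and are small enough that, by uniform continuity of the fixed homeomorphism $T^k$, the set $T^kB(\mathcal{S})$ has diameter at most $\diam(Z)/2$ and lies inside $Z$ (it contains $T^kx\in Z$). Then $\mathcal{P}:=T^k\mathcal{S}$ is as required: it has $\le n$ towers, $B(\mathcal{P})=T^kB(\mathcal{S})\subseteq Z$ with $\diam(B(\mathcal{P}))\le\diam(Z)/2$, and it refines $\mathcal{Q}$ because $\mathcal{S}$ refines $T^{-k}\mathcal{Q}$.

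Finally $(3)\Rightarrow(1)$: iterate (3) to manufacture a nested system. Start from $\mathcal{P}_0=\{X\}$; given $\mathcal{P}_m$, apply (3) with $Z=B(\mathcal{P}_m)$ (a base, hence with the finite covering property) and with $\mathcal{Q}$ the common refinement of $\mathcal{P}_m$ and a clopen partition of mesh $<2^{-m}$, obtaining $\mathcal{P}_{m+1}$ with $\le n$ towers, $B(\mathcal{P}_{m+1})\subseteq B(\mathcal{P}_m)$, $\diam(B(\mathcal{P}_{m+1}))\le\diam(B(\mathcal{P}_m))/2$, and $\mathcal{P}_{m+1}$ refining $\mathcal{P}_m$ with mesh $<2^{-m}$. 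The resulting $\{\mathcal{P}_m\}$ is nested, its atoms generate the topology, and its bases decrease to a single point $x$. Each base has the finite covering property, so as above $x\in Y$; thus $\{\mathcal{P}_m\}$ is a nested system in the sense of Theorem~\ref{thm:1}, and the associated ordered Bratteli diagram of Subsection~\ref{subsec:2.4} is essentially simple, conjugate to $(X,T)$, with $|V_m|$ equal to the number of towers of $\mathcal{P}_m$, hence $\le n$. Therefore $(X,T)$ has topological rank $\le n$.

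I expect the main obstacle to be precisely the re-basing in $(2)\Rightarrow(3)$: producing a partition whose base sits inside the arbitrary prescribed $Z$ while holding the number of towers at $\le n$. The shift $T^k\mathcal{S}$ resolves this, and the only delicate points are that $Z$ must genuinely meet $Y$ (guaranteed by its finite covering property) so that a translate of $x$ lands in $Z$, and that one controls $\diam(T^kB(\mathcal{S}))$ through uniform continuity of the fixed homeomorphism $T^k$ rather than expecting $T^k$ to preserve small diameters. The verifications that $x\in Y$ and that the iterated construction satisfies all clauses of Theorem~\ref{thm:1} are routine once the finite-covering-property characterization of points of $Y$ is isolated.
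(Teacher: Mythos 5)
Your proposal is correct and takes essentially the same route as the paper's proof: the same cycle $(1)\Rightarrow(2)\Rightarrow(3)\Rightarrow(1)$, with $(1)\Rightarrow(2)$ read off from the nested Kakutani--Rohlin systems of Subsection~\ref{subsec:2.4}, the re-basing in $(2)\Rightarrow(3)$ handled by a translation along the orbit, and $(3)\Rightarrow(1)$ by iterating clause (3) to build a nested system satisfying Theorem~\ref{thm:1} and passing to the associated ordered Bratteli diagram. The only cosmetic differences are that you shift the partition by $T^k$ (with the uniform-continuity bookkeeping made explicit) where the paper shifts the point, observing that (2) holds at every $T^nx$ since it is invariant under the conjugacy $T^n:(X,T)\to(X,T)$, and that you replace the paper's appeal to Theorem~1.1 of \cite{HPS} for membership of the limit point in the unique minimal set by a short direct argument via the finite covering property.
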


\begin{proof} We first show (1)$\Rightarrow$(2). Suppose $(X, T)$ has topological rank $\leq n$. Then there is an essentially simple ordered Bratteli diagram $B=(V,E,\preceq)$ such that $(X, T)$ is conjugate to the Bratteli--Vershik system $(X_B,\lambda_B)$ generated by $B$, and for all $k\geq 1$, $|V_k|\leq n$. It suffices to verify that (2) holds for $(X_B,\lambda_B)$. Let $y=\xmin$. Each level $V_k$ of $B$ gives rise to a Kakutani--Rohlin partition $\mathcal{P}_k$, where each set in $\mathcal{P}_k$ corresponds to a path from $V_0$ to a vertex in $V_k$, and $B(\mathcal{P}_k)$ consists of all the basic open sets correspondent to the minimal paths from $V_0$ to each vertex of $V_k$. Since $y$ is the unique minimal infinite path, we have $\bigcap_k B(\mathcal{P}_k)=\{y\}$. Let $\eta\leq 1$ be the standard metric on $X_B$. Let $\epsilon>0$. Then there is a large enough $k$ such that all the minimal paths from $V_0$ to the vertices of $V_k$ agree on the first $k'<k$ many edges where $2^{-k'}<\epsilon$. This implies that $\diam(B(\mathcal{P}_k))\leq 2^{-k'}<\epsilon$. Also, for all $A\in \mathcal{P}_k$, $\diam(A)\leq 2^{-k}<2^{-k'}<\epsilon$. $\mathcal{P}_k$ has $|V_k|\leq n$ many towers. Since $y\in B(\mathcal{P}_k)$, we have that $\mathcal{P}_k$ witnesses (2) for $(X_B, \lambda_B)$.

Next we show (2)$\Rightarrow$(3). Suppose (2) holds for $x\in X$. We note that for any $n\in \mathbb{Z}$, (2) holds also for $T^nx$. This is because, the property described in (2) is invariant under topological conjugacy, and $T^n:(X, T)\to (X, T)$ is a topological conjugacy sending $x$ to $T^nx$. Let $Z$ be a clopen subset of $X$ with the finite covering property. Without loss of generality assume $\dim(Z)>0$. Then $Z$ meets every orbit in $X$, and therefore there is $x\in Z$ such that the property in (2) holds. Let $\mathcal{Q}$ be a finite partition of $X$ into clopen sets. Let 
$\delta>0$ be the infimum of $d(y,z)$ where $y, z$ are from different elements of $\mathcal{Q}$.
Let $\xi>0$ be such that $x\in \{y\in X\,:\, \rho(x,y)<\xi\}\subseteq Z$ and $\diam(Z)>2\xi$.
Let $\epsilon=\min\{\delta, \xi\}>0$. Let $\mathcal{P}$ be a Kakutani--Rohlin partition with no more than $n$ many towers such that $\diam(A)<\epsilon$ for all $A\in\mathcal{P}$, $\diam(B(\mathcal{P}))<\epsilon$, and $x\in B(\mathcal{P})$. Then $B(\mathcal{P})\subseteq \{y\in X\,:\, \rho(x,y)<\xi\}\subseteq Z$, $\diam(B(\mathcal{P}))<\xi<\diam(Z)/2$, and $\mathcal{P}$ refines $\mathcal{Q}$ because for any $A\in\mathcal{P}$, $\diam(A)<\delta$.

Finally we prove (3)$\Rightarrow$(1). Assume $(X, T)$ is essentially minimal and (3) holds. Note that the base of any Kakutani--Rohlin partition has the finite covering property. By applying (3) repeatedly, we obtain a system of Kakutani--Rohlin partitions $\{\mathcal{P}_k\}_{k\geq 0}$ so that $\mathcal{P}_0=\{X\}$, each $\mathcal{P}_{k+1}$ refines $\mathcal{P}_k$, $B(\mathcal{P}_{k+1})\subseteq B(\mathcal{P}_k)$, $\diam(B(\mathcal{P}_{k+1}))\leq \diam(B(\mathcal{P}_k))/2$, each $\mathcal{P}_k$ consists of no more than $n$ many towers, and $\bigcup_k\mathcal{P}_k$ generates the topology of $X$. Let $x$ be the unique element of $\bigcap_kB(\mathcal{P}_k)$. Then any clopen subset of $X$ containing $x$ has the finite covering property. By Theorem 1.1 of \cite{HPS}, $x$ is in the unique minimal set of $X$. Now $\{\mathcal{P}_k\}_{k\geq 0}$ is a nested system of Kakutani--Rohlin partitions in the sense of Theorem~\ref{thm:1}, which gives rise to an ordered Bratteli diagram for $(X, T)$ with each level consisting of no more than $n$ vertices. Thus $\toprank(X, T)\leq n$.
\end{proof}

\begin{corollary}\label{cor:3.4} For any $n\geq 1$, the set of all essentially minimal $T\in \Aut(\mathcal{C})$ with topological rank $\leq n$ is a $G_\delta$ subset of $E(\mathcal{C})$. Similarly, for any $n\geq 1$, the set of all minimal $T\in \Aut(\mathcal{C})$ with topological rank $\leq n$ is a $G_\delta$ subset of $M(\mathcal{C})$.
\end{corollary}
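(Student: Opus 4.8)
The plan is to read the answer straight off Theorem~\ref{thm:3.3}: for an essentially minimal $T$, having topological rank $\leq n$ is equivalent to its condition~(3), and that condition becomes a genuinely countable formula once we exploit that the clopen algebra of $\mathcal{C}$ is countable. Concretely, I would fix enumerations of (i) the clopen subsets $Z$ of $\mathcal{C}$, (ii) the finite clopen partitions $\mathcal{Q}$, and (iii) the ``candidate Kakutani--Rohlin data,'' each consisting of clopen sets $B(1),\dots,B(d)$ with $d\leq n$ together with heights $h(1),\dots,h(d)$; all three families are countable. Condition~(3) then reads
$$ \forall Z\,\Big(\text{$Z$ has the finite covering property}\ \longrightarrow\ \forall\mathcal{Q}\,\exists\mathcal{P}\ \Phi(Z,\mathcal{Q},\mathcal{P},T)\Big), $$
where $\Phi$ asserts that $\mathcal{P}$ forms a Kakutani--Rohlin partition for $(\mathcal{C},T)$ with at most $n$ towers, that $B(\mathcal{P})\subseteq Z$ and $\diam(B(\mathcal{P}))\leq \diam(Z)/2$, and that $\mathcal{P}$ refines $\mathcal{Q}$. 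I deliberately use condition~(3) rather than~(2), because the existential ``$\exists x\in X$'' in~(2) is a quantifier over the uncountable space $\mathcal{C}$, which would only give an analytic set.

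The technical engine is to check that, for fixed data, $\Phi(Z,\mathcal{Q},\mathcal{P},\cdot)$ is a \emph{clopen} condition on $T\in\Aut(\mathcal{C})$. The constraints $B(\mathcal{P})\subseteq Z$ and $\diam(B(\mathcal{P}))\leq\diam(Z)/2$ involve no $T$ at all (they merely restrict which candidate data we admit), so everything reduces to the $T$-dependent assertions, which are Boolean combinations of atomic statements of the form $T^{j}A\subseteq C$, $T^{j}A\cap C=\varnothing$, $T^{j}A=C$, and $\bigcup_i T^{j_i}A_i=\mathcal{C}$ for fixed clopen $A,A_i,C$ and integers $j,j_i$. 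Using that the evaluation $(T,x)\mapsto T^{j}x$ is continuous on $\Aut(\mathcal{C})\times\mathcal{C}$, that inversion is continuous, and that clopen sets are compact, each such atomic statement is clopen in $T$: e.g.\ if $T_0^{j}A\subseteq C$ a finite-subcover argument over the compact set $A$ yields a neighborhood of $T_0$ on which the inclusion persists, and symmetrically for its negation. Hence $\Phi$ is clopen, $\bigcup_{\mathcal{P}}\Phi$ is open, and $\Psi(Z,T):=\bigcap_{\mathcal{Q}}\bigcup_{\mathcal{P}}\Phi$ is $G_\delta$.

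It remains to absorb the outer implication. Recall that for fixed clopen $Z$ the set of $T$ for which $Z$ has the finite covering property is open, so $F_Z:=\{T:\text{$Z$ fails the finite covering property}\}$ is closed. The point to check is that in a metric space a set of the form $(\text{closed})\cup(G_\delta)$ is again $G_\delta$: if $F=\bigcap_j U_j$ and $G=\bigcap_k O_k$ with all $U_j,O_k$ open, then $F\cup G=\bigcap_{j,k}(U_j\cup O_k)$ is an intersection of open sets. Applying this with $F=F_Z$ and $G=\Psi(Z,\cdot)$ shows that each implication in the display defines a $G_\delta$ set of $T$, and intersecting over the countably many $Z$ preserves $G_\delta$. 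Thus $\{T\in\Aut(\mathcal{C}):\text{(3) holds}\}$ is $G_\delta$. Intersecting it with $E(\mathcal{C})$ (which is $G_\delta$) and invoking Theorem~\ref{thm:3.3} gives the first assertion; intersecting instead with $M(\mathcal{C})$ (also $G_\delta$, and contained in $E(\mathcal{C})$ so that Theorem~\ref{thm:3.3} still applies since minimal systems are essentially minimal) gives the second.

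I would single out as the main obstacle the passage from the superficially second-order shape of the rank condition to a bona fide $G_\delta$ description. The two things that have to be verified rather than assumed are that every quantifier can be taken over the countable clopen data, so that no uncountable existential over points of $\mathcal{C}$ survives, and that the atomic tower conditions are clopen in $T$ via continuity of evaluation and compactness; the elementary closed$\,\cup\,G_\delta$ remark then clears away the ``finite covering property'' hypothesis without raising the complexity.
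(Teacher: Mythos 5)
Your proof is correct and is exactly the paper's (deliberately terse) argument spelled out: the paper proves the corollary by observing that clause~(3) of Theorem~\ref{thm:3.3} is a $G_\delta$ condition on $T$, and your quantification over the countable families of clopen sets, clopen partitions, and candidate Kakutani--Rohlin data, together with the clopenness in $T$ of the atomic tower conditions and the closed-$\cup$-$G_\delta$ remark for the finite covering hypothesis, is precisely the verification that justifies that one-line claim.
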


\begin{proof} This follows immediately from clause (3) of Theorem~\ref{thm:3.3}.
\end{proof}

We also have the following immediate corollary regarding the descriptive complexity of (essentially) minimal Cantor systems with finite topological rank.

\begin{corollary}\label{cor:3.45} The set of all essentially minimal $T\in\Aut(\mathcal{C})$ with finite topological rank is a ${\bf\Sigma}^0_3$ subset of $E(\mathcal{C})$. Similarly, the set of all minimal $T\in\Aut(\mathcal{C})$ with finite topological rank is a ${\bf\Sigma}^0_3$ subset of $M(\mathcal{C})$.
\end{corollary}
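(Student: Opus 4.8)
The plan is to express the class of finite topological rank systems as a countable union of the classes already shown to be $G_\delta$ in Corollary~\ref{cor:3.4}, and then read off the complexity directly from the definition of the Borel hierarchy. The key observation is that an essentially minimal Cantor system $(X, T)$ has finite topological rank precisely when $\toprank(X, T)\leq n$ for some $n\geq 1$. Consequently, writing $R_n$ for the set of all essentially minimal $T\in\Aut(\mathcal{C})$ with topological rank $\leq n$, the set of all essentially minimal $T$ with finite topological rank is exactly $\bigcup_{n\geq 1} R_n$.

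By Corollary~\ref{cor:3.4}, each $R_n$ is a $G_\delta$ subset of $E(\mathcal{C})$, i.e. $R_n\in{\bf\Pi}^0_2$. Recalling the definition of the Borel hierarchy, ${\bf\Sigma}^0_3$ consists of all countable unions of sets lying in ${\bf\Pi}^0_\beta$ for some $\beta<3$, and in particular any countable union of ${\bf\Pi}^0_2$ sets belongs to ${\bf\Sigma}^0_3$. Therefore $\bigcup_{n\geq 1} R_n\in{\bf\Sigma}^0_3$, which is the first assertion. The second assertion follows by the identical argument, replacing $E(\mathcal{C})$ with $M(\mathcal{C})$ and invoking the second half of Corollary~\ref{cor:3.4}, which gives that the set of minimal $T$ with topological rank $\leq n$ is $G_\delta$ in $M(\mathcal{C})$.

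There is essentially no obstacle here: the argument is a purely formal bookkeeping step within the Borel hierarchy, and all the substantive work has already been carried out in establishing Corollary~\ref{cor:3.4}. The only point worth recording explicitly is that the union ranges over the countable index set $\{n:n\geq 1\}$, which is exactly what licenses the passage from ${\bf\Pi}^0_2$ to ${\bf\Sigma}^0_3$; no uniformity in $n$ beyond the membership $R_n\in{\bf\Pi}^0_2$ is required.
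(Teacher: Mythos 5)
Your proof is correct and is exactly the argument the paper intends: the corollary is stated as an immediate consequence of Corollary~\ref{cor:3.4}, obtained by writing the class of finite topological rank systems as the countable union over $n\geq 1$ of the ${\bf\Pi}^0_2$ classes of systems with topological rank $\leq n$, which lands in ${\bf\Sigma}^0_3$ by the definition of the Borel hierarchy given in the preliminaries. No gaps; nothing further is needed.
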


Here we remark that the proof of Theorem~\ref{thm:3.3} implies that in clauses (2) and (3) of Theorem~\ref{thm:3.3} one may replace ``no more than $n$ many towers" with ``exactly $n$ many towers." Similar proofs also give that, if the system $(X, T)$ considered is minimal, then in clause (2) of Theorem~\ref{thm:3.3} one may replace ``there exists $x\in X$" with either ``for nonmeager many $x\in X$" or ``for comeager many $x\in X$."

Finally we note that the set of all infinite odometers form a dense $G_\delta$ in $M(\mathcal{C})$.

\begin{proposition}\label{prop:odometerdense} The set of all infinite odometers is a dense $G_\delta$ in the space of all minimal Cantor systems.
\end{proposition}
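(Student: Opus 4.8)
The plan is to split the statement into the easy $G_\delta$ part and the substantive density part. For the $G_\delta$ part I would simply invoke Corollary~\ref{cor:3.4} with $n=1$: since topological rank is always at least $1$, an infinite odometer is precisely a minimal Cantor system of topological rank $\leq 1$, so the set of all infinite odometers coincides with the set of minimal $T\in\Aut(\mathcal{C})$ of topological rank $\leq 1$, which is $G_\delta$ in $M(\mathcal{C})$. Everything after this is devoted to density, where the goal is: given $T\in M(\mathcal{C})$ and $\epsilon>0$, produce an odometer $T'\in M(\mathcal{C})$ with $d(T,T')<\epsilon$.

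For the construction I would first apply Lemma~\ref{lem:2.1} to $(\mathcal{C},T)$ (whose unique minimal set is all of $\mathcal{C}$) to obtain a Kakutani--Rohlin partition $\mathcal{P}=\{T^jB(k)\,:\,1\leq k\leq d,\ 0\leq j<h(k)\}$ whose base $Z_0\defeq B(\mathcal{P})$ is clopen with $\diam(Z_0)<\epsilon$. Writing $P_k=T^{h(k)-1}B(k)$ for the top of the $k$-th tower, the defining property of a Kakutani--Rohlin partition says that $T$ carries $\bigsqcup_k P_k$ homeomorphically onto $Z_0=\bigsqcup_k B(k)$. Since $Z_0$ is itself a Cantor set it admits an odometer $\theta$, and I would define $T'$ to agree with $T$ off $\bigsqcup_k P_k$ and to be $T'\rest P_k=\theta\circ T^{-(h(k)-1)}$ on each top, so that $T'$ maps $P_k$ homeomorphically onto $\theta(B(k))$. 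As $\bigsqcup_k\theta(B(k))=\theta(Z_0)=Z_0$, this makes $T'$ a homeomorphism of $\mathcal{C}$ whose first-return map to $Z_0$ is exactly $\theta$. For the distance estimate I would observe that $T'$ differs from $T$ only on $\bigsqcup_k P_k$, where both $Ty$ and $T'y$ lie in $Z_0$; hence $d(T,T')\leq\diam(Z_0)<\epsilon$.

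It then remains to verify that $(\mathcal{C},T')$ is an odometer, and here I would use that equicontinuous minimal Cantor systems are exactly the odometers, so it suffices to show $T'$ is minimal and equicontinuous. Minimality is straightforward: $Z_0$ is clopen and meets every $T'$-orbit (every point reaches $Z_0$ within $\max_k h(k)$ steps), so any nonempty closed $T'$-invariant set meets $Z_0$ in a nonempty closed $\theta$-invariant set, which must be all of $Z_0$ by minimality of $\theta$, forcing the set to be all of $\mathcal{C}$. For equicontinuity I would use that each $y$ has a unique tower representation $y=T^jz$ with $z\in B(k)$, $0\leq j<h(k)$ (and $T'^jz=T^jz$ along the climb), with $y\mapsto(k,j,z)$ continuous since each $T^jB(k)$ is clopen. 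Given $\epsilon'>0$, I would choose a clopen partition $\mathcal{V}$ of $Z_0$ refining $\{B(k)\}$ finely enough that points in one $\mathcal{V}$-piece have all of their finitely many, boundedly long climbs $T^j(\cdot)$ agreeing to within $\epsilon'$; equicontinuity of $\theta$ then gives $\delta'>0$ forcing $\theta^iz,\theta^iz'$ into a common $\mathcal{V}$-piece for every $i$ whenever $d_{\mathcal{C}}(z,z')<\delta'$. Consequently, two points close enough to share the same $(k,j)$ and have $d_{\mathcal{C}}(z_p,z_q)<\delta'$ traverse identical tower patterns under every power of $T'$ and stay within $\epsilon'$ of each other.

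I expect this last equicontinuity check to be the main obstacle. The subtlety is that a single rerouting of the tops produces a periodic, hence non-minimal, map, so minimality must be forced by routing the return map through a genuine odometer $\theta$; the delicate point is then to confirm that installing $\theta$ as the first-return map, with a return-time function taking the finitely many bounded values $h(1),\dots,h(d)$, keeps the system equicontinuous. The crucial ingredients there are that equicontinuity of $\theta$ keeps the $\theta$-itineraries of nearby base points \emph{synchronized} (so the tower patterns never desynchronize) and that the climbs involve only finitely many uniformly continuous maps $T^j$ of bounded length. All remaining steps — the $G_\delta$ claim, the homeomorphism check for $T'$, and the distance estimate — are routine.
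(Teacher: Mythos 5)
Your proposal is correct, but your density construction takes a genuinely different route from the paper's. The $G_\delta$ part is identical in both (Corollary~\ref{cor:3.4} with $n=1$). For density, the paper never invokes an abstract odometer $\theta$ or any equicontinuity criterion: working with $\tilde T=T^{-1}$, it refines the given clopen partition by a Kakutani--Rohlin partition (Lemma~\ref{lem:2.1}), forms the directed graph on the $d$ towers with an edge $v_k\to v_{k'}$ whenever $B(k')\cap\tilde T^{h(k)}B(k)\neq\varnothing$ (strongly connected by minimality), fixes a directed cycle traversing every edge, and splits the transition sets along that cycle so that all towers are strung into a single cyclic tower; this produces a one-tower Kakutani--Rohlin partition $\mathcal{Q}$, and the odometer is then pinned down by recursive refinements starting from $\mathcal{Q}$ --- that is, the rank-$1$ nested structure is exhibited directly, so no dynamical verification is needed at the end. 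You instead keep $T$ off the tower tops, install a prefabricated odometer $\theta$ as the first-return map to a base of diameter $<\epsilon$, and then must prove minimality and equicontinuity before quoting the paper's characterization of infinite odometers as the equicontinuous minimal Cantor systems. Your delicate step is sound for exactly the reasons you identify: the finitely many pieces of the clopen partition $\mathcal{V}$ are at positive pairwise distance, so equicontinuity of $\theta$ really does force nearby base points to share the same $\mathcal{V}$-itinerary for every $i\in\Z$; since the return time is constant on each $B(k)$ and $\mathcal{V}$ refines $\{B(k)\}$, the cumulative return times of the two orbits agree, they occupy the same height of the same tower at every time $n\in\Z$, and the finitely many uniformly continuous climbs $T^j$, $0\le j<\max_k h(k)$, give the $\epsilon'$ bound (this is precisely why a continuous, clopen-piecewise-constant return time over an odometer cannot desynchronize, in contrast to, say, Toeplitz extensions). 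If you wanted to bypass the equicontinuity check altogether, note that choosing clopen sets $D_m\subseteq Z_0$ on which the $\theta$-itinerary up to the $m$-th return is constant yields one-tower Kakutani--Rohlin partitions for $T'$ with bases shrinking to a point, showing topological rank $1$ directly --- which is in effect what the paper's recursive refinement does. Two minor points: your estimate bounds only $d(T,T')=\sup_x d_{\mathcal{C}}(Tx,T'x)$, not the distance between inverses, but this is exactly the paper's metric on $\Aut(\mathcal{C})$ (and uniform convergence of homeomorphisms of a compact space controls the inverses in any case); and the paper's matching condition $SZ=TZ$ for all $Z\in\mathcal{P}$, obtained via the $T^{-1}$ detour, is a set-image form of approximation that likewise yields metric closeness once $\mathcal{P}$ is fine enough. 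In sum, the paper's proof is combinatorial and self-contained, constructing the odometer structure by hand; yours is shorter on the construction side at the cost of concentrating the work in one synchronization argument, which you carry out correctly.
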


\begin{proof} 
Since the set of all infinite odometers is just the set of all minimal Cantor systems of topological rank $1$, it is a $G_\delta$ in the space of all minimal Cantor systems by Corollary~\ref{cor:3.4}. We only verify that it is dense. Let $(X, T)$ be a minimal Cantor system and suppose $\mathcal{P}$ is a clopen partition of $X$. We only need to define an infinite odometer $S$ on $X$ such that $SZ=TZ$ for all $Z\in \mathcal{P}$. Consider $\tilde{T}=T^{-1}$. Then $(X,\tilde{T})$ is again a mininal Cantor system. If we define an infinite odometer $\tilde{S}$ on $X$ such that $\tilde{S}^{-1}Z=\tilde{T}^{-1}Z$ for all $Z\in\mathcal{P}$, then $S=\tilde{S}^{-1}$ is again an infinite odometer, and $SZ=TZ$ holds for all $Z\in\mathcal{P}$. Thus we focus on $(X, \tilde{T})$ in the rest of this proof.

By Lemma~\ref{lem:2.1} $\mathcal{P}$ can be refined by a Kakutani--Rohlin partition for $\tilde{T}$. Therefore, without loss of generality, we may assume that $\mathcal{P}$ itself is a Kakutani--Rohlin partition. Suppose $\mathcal{P}=\{ \tilde{T}^jB(k)\,:\, 1\leq k\leq d,\, 0\leq j<h(k)\}$. 

We define a directed graph $G=(V,E)$, where $V=\{v_1,\cdots,v_d\}$ has $d$ vertices, and for any $1\leq k,k'\leq d$, there is a directed edge $e\in E$ from $v_k$ to $v_{k'}$ iff $B(k')\cap \tilde{T}^{h(k)}B(k)\ne\varnothing$. It follows from the minimality of $\tilde{T}$ that $G=(V,E)$ is strongly connected, i.e, there is a directed path from any vertex to any other vertex. Now fix a finite sequence $p=(e_1,\cdots,e_m)$ of edges in $G$ such that $(e_1,\cdots,e_m,e_1)$ is a directed path and $\{e_1,\cdots,e_m\}=E$. Then $p$ is a directed cycle in $G$. 

Consider an edge $e\in E$, say $e$ is from $v_k$ to $v_{k'}$. Let $n_e$ be the number of times $e$ appears in $p$. Let $A_e=B(k)\cap \tilde{T}^{-h(k)}(B(k'))$. Then $A_e$ is a clopen set in $X$. Let $\{A_{e,1},\dots, A_{e, n_e}\}$ be a partition of $A_e$ into $n_e$ many clopen subsets of $X$. If $e$ appears in $p$ as $e_{i_1},\dots, e_{i_{n_e}}$, we associate with each $e_{i_j}$ the set $A_{e,j}$ for $1\leq j\leq n_e$.

Thus we have obtained disjoint nonempty clopen sets $C_1,\cdots,C_m$ such that $\mathcal{Q}\triangleq\{C_1,\cdots,C_m\}$ is a partition of $B(\mathcal{P})$, and for any $1\le i\le m$, if $e_i$ is an edge from $v_k$ to $v_{k'}$ then $C_i\subseteq B(k)$ and $\tilde{T}^{h(k)}C_i\subseteq B(k')$. We define an odometer $\tilde{S}\,:\, X\to X$ such that for any $1\le i\le m$, if $e_i$ is an edge from $v_k$ to $v_{k'}$ then $\tilde{S}^{h(k)}C_i=C_{i+1}$ (with $C_{m+1}=C_1$) and $\tilde{S}^{j}C_i=\tilde{T}^{j}C_i$ for $1\le j<h(k)$. In fact $\mathcal{Q}$ is a Kakutani--Rohlin partition for $\tilde{S}$ (with one tower), and $\tilde{S}$ is defined by recursive refinements starting with $\mathcal{Q}$. It is now clear that $\tilde{S}^{-1}Z=\tilde{T}^{-1}Z$ for all $Z\in \mathcal{P}$ as desired.
\end{proof}

\section{A characterization of minimal rank-$1$ subshifts\label{sec:5}}

In this section we give an explicit topological characterization for all minimal Cantor systems which are conjugate to infinite rank-$1$ subshifts. In contrast to the results in Section~\ref{sec:3}, the descriptive complexity of this characterization will be on a higher level than $G_\delta$. 

Define 
$$\mathcal{Z}=\left\{x\in 2^\mathbb{Z}\,:\, \forall n\ \exists m>n\ x(m)=0\mbox{ and } \forall n\ \exists m>n\ x(-m)=0\right\}. $$
Then $\mathcal{Z}$ is a $\sigma$-invariant dense $G_\delta$ subset of $2^\mathbb{Z}$.

For a bi-infinite word $x\in \mathcal{Z}$ and a finite word $v\in\mathcal{F}$, we say that $x$ is {\em built from} $v$ if $\sigma^n(x)$ can be written in the form
$$ \sigma^n(x)=\cdots v1^{s_{-2}}v1^{s_{-1}}v1^{s_0} \cdot  v1^{s_1}v1^{s_2}\cdots $$
for an bi-infinite sequence $(\cdots, s_{-2}, s_{-1}, s_0, s_1, s_2, \cdots)$ of nonnegative integers and for some $n\in\Z$. For finite words $u,v\in\mathcal{F}$, we say that $u$ is {\em built from} $v$ if there are nonnegative integers $s_1,\dots, s_k$ for $k\geq 1$ such that
$$ u=v1^{s_1}v\cdots v1^{s_k}v. $$
The demonstrated occurrences of $v$ in $u$ are called {\em expected occurrences}.

\begin{lemma}\label{lem:5.1} Let $x\in \mathcal{Z}$ and $v\in\mathcal{F}$. Then the following are equivalent:
\begin{enumerate}
\item[(i)] $x$ is built from $v$.
\item[(ii)] For all $m\in\mathbb{N}$ there exists a finite word $u$ such that $x\rest[-m,m]$ is a subword of $u$ and $u$ is built from $v$.
\end{enumerate}
\end{lemma}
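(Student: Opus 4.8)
The plan is to prove the two implications separately; (i)$\Rightarrow$(ii) is routine, while (ii)$\Rightarrow$(i) requires a compactness argument. For (i)$\Rightarrow$(ii), I would fix $n$ with $\sigma^n(x)=\cdots v1^{s_{-1}}v1^{s_0}\cdot v1^{s_1}\cdots$. Given $m$, note that $x\rest[-m,m]=\sigma^n(x)\rest[-m-n,m-n]$, and since the displayed decomposition has $v$-blocks unbounded on both sides, I would choose a $v$-block beginning at or before position $-m-n$ and one ending at or after $m-n$. The stretch of $\sigma^n(x)$ between these two block boundaries is a finite word of the form $v1^{s_i}v\cdots v1^{s_j}v$, hence built from $v$, and it contains $x\rest[-m,m]$ as a subword, giving the required $u$.

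For (ii)$\Rightarrow$(i), the idea is to assemble the finite parsings furnished by (ii) into a single bi-infinite parsing of $x$ into $v$-blocks and $1$-spacers. For each $m$ let $u_m$ be built from $v$ with $x\rest[-m,m]$ occurring in $u_m$ at some position $t_m$. The expected occurrences of $v$ in $u_m$ assign to each position $p\in\{-m,\dots,m\}$ a \emph{type} $\tau_m(p)$, recording whether $p$ lies inside an expected occurrence of $v$ (and if so, at which offset $0,\dots,|v|-1$) or inside a spacer (type $*$). Regarding each $\tau_m$ as an element of the compact space $(\{0,\dots,|v|-1\}\cup\{*\})^{\mathbb{Z}}$, say by setting $\tau_m\equiv *$ off the window, I would pass to a subsequence along which $\tau_m\to\tau$ pointwise.

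The verification then splits in two. First, $\tau$ is a legitimate parsing: validity is a condition on consecutive coordinates (offsets increase through a $v$-block and match $v$, then reset to $0$ or to $*$; spacer positions carry the symbol $1$; everything agrees with $x$), and for each $p$ and all large $m$ in the subsequence, $p$ and $p+1$ are interior to $[-m,m]$, so $\tau_m$ reflects the genuine building of $u_m$ there; hence the local conditions pass to the limit. Second --- and here the hypothesis $x\in\mathcal{Z}$ is essential --- $\tau$ contains infinitely many $v$-blocks on both sides: every $0$ of $x$ must lie inside a $v$-block, since spacers consist of $1$'s, so finitely many blocks would force $x$ to be eventually constant $1$ on one side, contradicting $x\in\mathcal{Z}$. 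Shifting $x$ so that some block of $\tau$ begins at position $0$ then exhibits $x$ as built from $v$.

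The hard part will be this reverse direction, specifically guaranteeing that the limit object is a genuine bi-infinite parsing rather than one terminating in a partial or missing block ``at infinity.'' The example $x=\cdots 111\,v\,111\cdots$, a single $v$-block in a sea of $1$'s, satisfies (ii) but not (i) and lies outside $\mathcal{Z}$; this shows that the coverage-to-infinity step, and thus the appeal to $x\in\mathcal{Z}$, is indispensable and must be handled with care.
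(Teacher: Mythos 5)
Your proof is correct, and it reaches (ii)$\Rightarrow$(i) by a genuinely different mechanism from the paper's. The paper never forms a limit parsing: letting $n$ be the number of $0$s in $v$, it fixes $n$ consecutive $0$-positions $k_1<\cdots<k_n$ of $x$ in one window and observes that in any building of $u_m$ the expected occurrences of $v$ start at every $n$-th zero (since $v$ begins and ends with $0$ and spacers contain no zeros), so exactly one $k_i$ is a start; an infinite pigeonhole then fixes a single anchor $k_\infty$ valid for infinitely many $m$, and occurrences are propagated to precisely those zeros whose zero-count from $k_\infty$ is a multiple of $n$, in both directions --- $x\in\mathcal{Z}$ entering exactly where you invoke it. Your compactness extraction of a limit type function $\tau$ replaces this arithmetic synchronization, and your two delicate points check out: restricting to interior consecutive pairs makes the artificial $*$-padding wash out, and the allowed-pairs constraint (the only admissible predecessor of offset $o>0$ is $o-1$) forces limit blocks to be complete, while $x\in\mathcal{Z}$ rules out parsings degenerating at infinity, as your $\cdots 111\,v\,111\cdots$ example rightly shows is necessary. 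The paper's route buys a finitary argument (pigeonhole over $n$ candidates, no subsequence) plus the sharper byproduct that block starts are canonically located by the zero-count modulo $n$; yours buys softness and generality, using only that spacers contain no $0$s rather than the mod-$n$ rigidity of $\mathcal{F}$-words.
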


\begin{proof} The implication (i)$\Rightarrow$(ii) is immediate. We show (ii)$\Rightarrow$(i). Let $n$ be the number of $0$s in $v$, i.e., $n$ is the number of distinct occurrences of $0$ in $v$. Let $m_0$ be large enough such that $x\rest[-m_0,m_0]$ contains at least $n$ many $0$s. Let $k_1<\dots<k_n\in [-m_0,m_0]$ be such that $x(k_i)=0$ for all $1\leq i\leq n$ and that if $k_1\leq k\leq k_n$ is such that $x(k)=0$ then $k=k_i$ for some $1\leq i\leq n$. By (ii), for each $m\geq m_0$ there is a finite word $u$ such that $x\rest[-m,m]$ is a subword of $u$ and $u$ is built from $v$. Exactly one of $k_1, \dots, k_n$ corresponds to a starting position of an expected occurrence of $v$ in $u$. We denote this value of $k\in\{k_1,\dots, k_n\}$ as $k(m)$. Let $k_\infty\in \{k_1,\dots, k_n\}$ be such that for infinitely many $m\geq m_0$, $k(m)=k_\infty$. Let $M_\infty$ be the infinite set such that for all $m\in M_\infty$, $k(m)=k_\infty$. Then $v$ occurs in $x$ starting at position $k_\infty$. We claim that for all $k>k_\infty$ such that $x(k)=0$ and there are a multiple of $n$ many $0$s from $k_\infty$ to $k-1$, $v$ occurs in $x$ starting at position $k$. This is because, fixing such a $k$ and letting $m\in M_\infty$ with $m\geq k+|v|$, there is a finite word $u$ such that $x\rest[-m,m]$ is a subword of $u$ and $u$ is built from $v$; since the occurrence of $v$ starting at position $k_\infty$ corresponds to an expected occurrence of $v$ in $u$, it follows that there is another expected occurrence of $v$ in $u$ starting at the position corresponding to $k$, and so $v$ occurs in $x$ starting at position $k$. By a similar argument we can also prove a claim that for all $k<k_\infty$ such that $x(k)=0$ and there are a multiple of $n$ many $0$s from $k$ to $k_\infty-1$, $v$ occurs in $x$ starting at position $k$. Putting these two claims together, we conclude that $x$ is built from $v$.
\end{proof}

Lemma~\ref{lem:5.1} implies immediately that for any $v\in\mathcal{F}$, the set of all $x\in \mathcal{Z}$ such that $x$ is built from $v$ is closed in $\mathcal{Z}$. 

Let $(X, T)$ be a Cantor system and let $A$ be a clopen subset of $X$. Define $\mathcal{B}_T(A)$ to be the smallest Boolean algebra $\mathcal{B}$ of subsets of $X$ such that $T^nA\in \mathcal{B}$ for all $n\in\mathbb{Z}$. We say that $(T, A)$ is {\em generating} if $\mathcal{B}_T(A)$ contains all clopen subsets of $X$.

\begin{theorem}\label{thm:5.3} Let $(X, T)$ be a minimal Cantor system and $x_0\in X$. Then the following are equivalent:
\begin{enumerate}
\item[(1)] $(X, T)$ is conjugate to a (infinite) rank-$1$ subshift.
\item[(2)] There is a clopen subset $A$ of $X$ such that $(T,A)$ is generating and for all $n\in\mathbb{N}$ there is a $v\in\mathcal{F}$ satisfying:
\begin{itemize}
\item $|v|\geq n$ and $\Ret_A(x_0)$ is built from $v$, and
\item for any $u\in\mathcal{F}$ such that $|u|\geq |v|$ and $\Ret_A(x_0)$ is built from $u$, there exists $u'\in \mathcal{F}$ such that $|u'|\le |u|+|v|$, $u'$ is built from $v$, and $u$ is an initial segment of $u'$.
\end{itemize}
\end{enumerate}
\end{theorem}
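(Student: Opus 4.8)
The plan is to prove the two implications by exploiting the hypothesis that $(T,A)$ is generating to move between the abstract system $(X,T)$ and an honest subshift of $2^\mathbb{Z}$, and then to translate the rank-$1$ structure into the combinatorial ``built from'' language, where Lemma~\ref{lem:5.1} and the closedness remark following it let me pass freely between finite words and bi-infinite buildings. The basic dictionary is that if $(T,A)$ is generating then $\pi(x)=(\mathbf 1_A(T^kx))_{k\in\mathbb{Z}}$ is a conjugacy of $(X,T)$ onto a subshift $Y=\pi(X)\subseteq 2^\mathbb{Z}$, and $\pi(x_0)=\Ret_A(x_0)$; moreover, if $(X,T)$ is minimal and infinite and $A$ is a proper clopen set, then $\Ret_A(x_0)$ has infinitely many $0$'s on each side, i.e.\ $\Ret_A(x_0)\in\mathcal{Z}$, so the ``built from'' relation makes sense.

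For $(1)\Rightarrow(2)$ I would fix a conjugacy $\varphi\colon(X,T)\to(X_V,\sigma)$ with $V=\lim_m v_m$ a rank-$1$ word, set $z_0=\varphi(x_0)$, and take $A=\varphi^{-1}(\{w\in X_V: w(0)=1\})$, so that $\Ret_A(x_0)=z_0$ as elements of $2^\mathbb{Z}$. Since the shifts of the cylinder $\{w(0)=1\}$ generate all cylinders and hence all clopen sets, $(T,A)$ is generating, and as above $z_0\in\mathcal{Z}$. Because $V$ is built from each $v_m$ and every finite subword of $z_0$ occurs in $V$, Lemma~\ref{lem:5.1} shows $z_0$ is built from every $v_m$; choosing $m$ with $|v_m|\geq n$ gives the first bullet of (2). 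The second bullet is a \emph{recognizability} statement: for $m$ large and any $u\in\mathcal{F}$ with $|u|\geq|v_m|$ from which $z_0$ is built, every occurrence of $u$ in the $u$-building of $z_0$ must begin at an expected $v_m$-boundary; granting this, $u$ (which begins and ends with $0$, so starts and stops inside some $v_m$-block rather than in a spacer) is a prefix of the word $u'$ obtained by extending $u$ to the end of the $v_m$-block it terminates in, and this $u'$ is built from $v_m$ with $|u'|\leq|u|+|v_m|$. I expect the main obstacle here to be establishing the recognizability of the $v_m$ at sufficiently high levels, which I would isolate as a separate lemma (or invoke from the known finite-rank recognizability results), since a priori $v_m$ may admit spurious, non-boundary occurrences.

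For $(2)\Rightarrow(1)$ I would use generativity to view $(X,T)$ as the subshift $Y$ with $y_0=\Ret_A(x_0)\in\mathcal{Z}$, noting $y_0$ is built from $w_0=0$. For each $n$ I fix a word $v^{(n)}$ as in (2) and pass to a subsequence with $v^{(0)}=0$ and $|v^{(n_0)}|<|v^{(n_1)}|<\cdots$. The key structural consequence I want to extract from canonicity is: whenever $|v'|\geq|v|$ and both build $y_0$, with $v$ one of the chosen words, every boundary of the $v'$-building of $y_0$ is also a $v$-boundary, so that $v$ is a prefix of $v'$ and $v'$ is itself built from $v$. (That $v$ is a prefix of $v'$ is immediate, since canonicity makes $v'$ a prefix of some $u'=v1^{a_1}v\cdots v$.) Granting this, I set $w_k=v^{(n_k)}$: then, after telescoping so that each level contains at least two copies of the previous base word, $(w_k)$ is a genuine rank-$1$ generating sequence, $y_0$ is built from every $w_k$, and each $w_k$ occurs in $y_0$. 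Putting $W=\lim_k w_k$, the inclusion $\mathrm{words}(W)\subseteq\mathrm{words}(y_0)$ is immediate because each $w_k$ occurs in $y_0$, while the reverse inclusion follows since $y_0$ is built from each $w_k$, so every finite subword of $y_0$ sits inside a concatenation of $w_k$'s with bounded spacers, all of which are subwords of $W$. Hence $X_W=Y$, and since $Y$ is an infinite minimal Cantor system, $(X,T)$ is conjugate to the infinite rank-$1$ subshift $X_W$.

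The hard part in this direction will be justifying the claim that every $v'$-boundary is a $v$-boundary from the abstract canonicity alone: canonicity yields only that $v'$ is a prefix of \emph{some} word $u'$ built from $v$, and one must upgrade this to compatibility with an actual building of $y_0$, ruling out mismatches between the spacers seen in $y_0$ and those prescribed by $u'$. I plan to resolve this by choosing the buildings of $y_0$ from the successive $v^{(n_k)}$ coherently, so that their boundary sets nest, using the length bound $|u'|\leq|u|+|v|$ (which prevents the extension from overshooting past the next block) together with the closedness of the set of points built from a fixed word to pass to the bi-infinite limit. In both directions, then, the technical heart is the same phenomenon — the essential uniqueness of the rank-$1$ parsing — once packaged as recognizability of base words and once as compatibility of nested buildings.
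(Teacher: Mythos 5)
Your direction (1)$\Rightarrow$(2) is essentially the paper's argument: the paper also reduces by conjugacy-invariance to $X=X_V$ with $A$ the cylinder $\{x\in X_V : x(0)=1\}$ (so that $\Ret_A(x_0)=x_0$), gets the first bullet from the fact that any word building $V$ builds every point of $X_V$, and obtains the second bullet not by proving recognizability from scratch but by choosing $v$ so that $V$ is built \emph{fundamentally} from $v$ (Definition 2.13 of \cite{GH}) and quoting Propositions 2.16 and 2.36 of \cite{GH}. So the recognizability lemma you defer is exactly the black box the paper cites; the one caution is that your lemma as stated --- recognizability of the levels $v_m$ of the \emph{given} generating sequence ``at sufficiently high levels'' --- is not what the literature provides: spurious occurrences of the $v_m$ can persist at every level, which is precisely why \cite{GH} replaces the $v_m$ by specially chosen fundamental base words. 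With that substitution, this half is fine.

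The genuine gap is in (2)$\Rightarrow$(1), and you have located it yourself but not closed it: the nesting claim (every boundary of the $v'$-building of $y_0$ is a $v$-boundary, hence $v'$ is built from $v$) is not a consequence of clause (2) and is likely unprovable from it. The word $u'$ produced by the second bullet is abstract --- it is not required to occur in $y_0$, let alone to be compatible with any building of $y_0$ --- and buildings of a bi-infinite word are neither unique nor anchored, so nothing forces boundary sets to cohere; your proposed repair (``choose the buildings coherently \dots pass to the bi-infinite limit'') is itself a recognizability assertion that (2) does not supply. The paper's proof shows this claim is also unnecessary, because it never parses $y_0$ at all: applying the second bullet with $u=v_m$ for $m>n$ (legitimate, since $\Ret_A(x_0)$ is built from $v_m$ and $|v_m|\ge |v_n|$) shows $v_n$ is automatically a prefix of $v_m$ and $v_m$ is a prefix of some word built from $v_n$; the paper then sets $V=\lim_n v_n$, a one-sided word every prefix of which extends to a word built from each $v_n$, so that $V$ is a rank-$1$ word, and concludes $X_V=\Ret_A(X)$ purely from minimality: every subword of $V$ occurs in $\Ret_A(x_0)$, so $X_V$ is a nonempty closed invariant subset of the minimal system $\Ret_A(X)$, hence equal to it. This also exposes a secondary flaw in your write-up: your reverse inclusion --- that every subword of $y_0$ lies in a concatenation of $w_k$'s ``all of which are subwords of $W$'' --- is unjustified, since the spacer patterns occurring in a building of $y_0$ from $w_k$ need not occur in $W$; but by the minimality argument just described, that inclusion is never needed. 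In short: replace the nested-buildings strategy by the anchored one-sided limit word, and the proof goes through along the paper's lines.
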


\begin{proof} Clause (2) is apparently conjugacy invariant, thus to see (1)$\Rightarrow$(2), we may assume $V$ is a rank-$1$ word, $X=X_V$ is an infinite minimal rank-$1$ subshift, and $T=\sigma$. Let $A=\{x\in X\,:\, x(0)=1\}$. Then $(T, A)$ is generating, and $\Ret_A(x_0)=x_0$. The set of all finite words $v$ such that $V$ is built from $v$ is a subset of the set of all finite words $v$ such that $x_0$ is built from $v$. Now given any $n\in\mathbb{N}$, let $v\in\mathcal{F}$ be such that $V$ is built {\em fundamentally} from $v$ (see Definition~2.13 of \cite{GH}). Then by Proposition~2.16 of \cite{GH}, for any $u\in\mathcal{F}$ such that $|u|\geq |v|$ and $V$ is built from $u$, $u$ is built from $v$. This proves (2) by Proposition~2.36 of \cite{GH}.

Conversely, assume $A$ is a clopen subset of $X$ witnessing (2). Since $(T,A)$ is generating, the map $\Ret_A: X\to 2^\mathbb{Z}$ is a homeomorphic embedding such that $\Ret_A\circ T=\sigma\circ \Ret_A$. Thus $\Ret_A(X)$ is a minimal subshift, and $\Ret_A$ is a conjugacy map. By repeatedly applying (2), we obtain an infinite sequence of finite words $\{v_n\}_{n\geq 0}$ in $\mathcal{F}$ such that $\Ret_A(x_0)$ is built from each $v_n$ and for all $n\geq 0$, $v_n$ is an initial segment of $v_{n+1}$ and $v_{n+1}$ is an initial segment of some $u$ which is built from $v_n$. This allows us to define an infinite word $V=\lim_n v_n$. By definition, $V$ is a rank-$1$ word. To finish the proof it suffices to verify that $\Ret_A(X)=X_V$. By the minimality of $\Ret_A(X)$, for any $y\in \Ret_A(X)$, the set of all finite subwords of $y$ coincides with the set of all finite subwords of $\Ret_A(x_0)$. On the other hand, our assumption guarantees that the set of all finite subwords of $\Ret_A(x_0)$ coincides with the set of all finite subwords of $V$. Thus $\Ret_A(X)=X_V$ and $X$ is conjugate to $X_V$, a rank-$1$ subshift.
\end{proof}

The apparent descriptive complexity given by clause (2) of the above theorem is ${\bf\Sigma}^0_5$, which is significantly more complex than $G_\delta$.

\section{Proper finite rank constructions\label{sec:4}}
The following is a basic property regarding symbolic rank-$n$ constructions.

\begin{proposition}\label{prop:4.1} Let $n\geq 1$. Suppose $\{T_i\}_{i\geq 0}$ is a sequence of finite subsets of $\mathcal{F}$ such that $T_0=\{0\}$ and for all $i\geq 0$, $|T_i|\leq n$ and each element of $T_{i+1}$ is built from $T_i$. Then there is a rank-$n$ construction with associated rank-$n$ generating sequence $\{v_{i,j}\}_{i\geq 0, 1\leq j\leq n_i}$ such that for all $i\geq 0$, $v_{i,1},\dots, v_{i, n_i}\in T_i$.
\end{proposition}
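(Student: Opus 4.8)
The plan is to reduce the statement to the single nontrivial requirement in the definition of a rank-$n$ generating sequence, namely that $v_{i+1,1}$ be built from $S_i$ \emph{starting with} $v_{i,1}$. Every other requirement will be automatic once we set $S_i \triangleq \{v_{i,1},\dots,v_{i,n_i}\} = T_i$ with $n_i = |T_i| \le n$: the words $v_{i+1,j}$ are, by hypothesis, built from $T_i = S_i$, and $v_{0,1} = 0$ since $T_0 = \{0\}$. Thus the whole content of the proposition is to choose, coherently across all levels, a distinguished word $v_{i,1} \in T_i$ (the ``spine'' of the construction) so that the distinguished word at the next level can be built starting from it. We may assume each $T_i$ is nonempty, which is explicit for $i=0$ and is necessary for any such construction to exist.

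To locate the spine, first fix for each $i \ge 0$ and each $t \in T_{i+1}$ one building of $t$ from $T_i$, and let $\mathsf{start}_{i+1}(t) \in T_i$ denote the first word occurring in this building; this defines a map $\mathsf{start}_{i+1} \colon T_{i+1} \to T_i$. Since $T_0 = \{0\}$, we have $\mathsf{start}_1(t) = 0$ for every $t \in T_1$. I now want an infinite thread $(w_i)_{i\ge 0}$ with $w_i \in T_i$, $w_0 = 0$, and $\mathsf{start}_{i+1}(w_{i+1}) = w_i$ for all $i$. Consider the tree $\mathcal{T}$ whose nodes at level $m$ are the finite sequences $(w_0,\dots,w_m)$ satisfying $w_0 = 0$ and $\mathsf{start}_{i+1}(w_{i+1}) = w_i$ for $0 \le i < m$, with $(w_0,\dots,w_{m+1})$ declared a child of $(w_0,\dots,w_m)$. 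Each $t \in T_m$ determines such a node, by setting $w_m = t$ and applying the maps $\mathsf{start}$ downward (which lands at $w_0 = 0$ because $\mathsf{start}_1$ takes values in $T_0 = \{0\}$), so $\mathcal{T}$ has a node at every level; moreover $\mathcal{T}$ is finitely branching, since each node has at most $|T_{m+1}| \le n$ children. By K\"onig's lemma $\mathcal{T}$ has an infinite branch, which is exactly the desired thread $(w_i)_{i\ge 0}$.

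With the spine in hand I assemble the construction: set $v_{i,1} = w_i$, enumerate the remaining elements of $T_i$ arbitrarily as $v_{i,2},\dots,v_{i,n_i}$ (so $n_i = |T_i|$ and $S_i = T_i$), and equip each $v_{i+1,j}$ with its fixed building from $S_i$, using for $v_{i+1,1} = w_{i+1}$ precisely the building witnessing $\mathsf{start}_{i+1}(w_{i+1}) = w_i$. Then $n_i \le n$, $v_{0,1} = 0$, each $v_{i+1,j}$ is built from $S_i$, and $v_{i+1,1}$ is built from $S_i$ starting with $v_{i,1}$; hence $\{v_{i,j}\}$ is a rank-$n$ generating sequence, the fixed buildings turn it into a rank-$n$ construction, and by design $v_{i,1},\dots,v_{i,n_i} \in T_i$ for all $i$. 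The only real obstacle is the coherence of the spine: a naive level-by-level choice of $v_{i,1}$ could get stuck if no element of $T_{i+1}$ happens to be built starting with the chosen $v_{i,1}$, and it is exactly this global compatibility that the K\"onig's lemma argument secures.
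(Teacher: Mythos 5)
Your proof is correct and takes essentially the same route as the paper: both fix one building per word, distinguish the first word of each fixed building, and extract a coherent spine $(v_{i,1})_{i\geq 0}$ through the levels before enumerating the rest of each $T_i$ arbitrarily. The only cosmetic difference is that you invoke K\"onig's lemma on the tree of compatible finite threads, whereas the paper inlines the same compactness argument as a pigeonhole induction (maintaining ``infinitely many words above $u_i$'' in the transitive closure of the starts-with relation), so the mathematical content coincides.
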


\begin{proof} For each $i\geq 0$ and $v\in T_{i+1}$, fix a building of $v$ from $T_i$. Define a binary relation $R$ on $\bigcup_{i\geq 0}T_i$ by $R(u,v)$ if for some $i\geq 0$, $u\in T_i$, $v\in T_{i+1}$, and the building of $v$ from $T_i$ starts with $u$. Let $<$ be the transitive closure of $R$. Then $<$ is a (strict) partial order on $\bigcup_{i\geq 0}T_i$. We inductively define an infinite $R$-chain of words $\{u_i\}_{i\geq 0}$, i.e., $u_i\in T_i$ and $R(u_i, u_{i+1})$ for all $i\geq 0$. Let $u_0=0$. Note that there are infinitely many words $u\in \bigcup_{i\geq 0}T_i$ such that $u_0<u$ (in fact $u_0<u$ for all $u\in \bigcup_{i\geq 0}T_i$). In general, assume $u_i$ has been defined such that there are infinitely many $w\in \bigcup_{i\geq 0}T_i$ with $u_i<w$. In particular the set $W=\{w\in \bigcup_{j\geq i+2}T_j\,:\, u_i<w\}$ is infinite. Note that for each $w\in W$ there is a $u_w\in T_{i+1}$ such that $R(u_i,u_w)$ and $u_w<w$. Since $T_{i+1}$ is finite, there is a $v\in T_{i+1}$ such that for infinitely many $w\in W$, $u_w=v$. Let $u_{i+1}=v$. Then there are infinitely many $w\in \bigcup_{i\geq 0}T_i$ such that $u_{i+1}<w$. This finishes the inductive construction. 

Now define $v_{i,j}$ for each $i\geq 0$ so that $v_{i,1}=u_i$ and $\{v_{i,1}, \dots, v_{i,n_i}\}=T_i$, where $n_i=|T_i|$. With the fixed buildings, this gives a rank-$n$ construction as required.
\end{proof}

Next we characterize the rank-$n$ subshifts which have proper rank-$n$ constructions. We use $1^\mathbb{Z}$ to denote the element $x\in 2^\mathbb{Z}$ where $x(k)=1$ for all $k\in\mathbb{Z}$.

\begin{theorem}\label{thm:4.2} Let $n\geq 1$ and let $X$ be a subshift of symbolic rank $n$. The following are equivalent:
\begin{enumerate}
\item[(1)] There exists a rank-$n$ word $V$ such that $X=X_V$, and $V$ has a proper rank-$n$ construction.
\item[(2)] For any rank-$n$ word $V$ such that $X=X_V$, $V$ has a proper rank-$n$ construction.
\item[(3)] For any $x\in X$ such that $x\neq 1^\mathbb{Z}$, the orbit of $x$ is dense in $X$.
\end{enumerate}
\end{theorem}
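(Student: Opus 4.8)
The plan is to prove the cycle of implications $(2)\Rightarrow(1)\Rightarrow(3)\Rightarrow(2)$. The implication $(2)\Rightarrow(1)$ is immediate: since $\symbrank(X)=n$, by definition there is a rank-$n$ word $V$ with $X=X_V$, and $(2)$ hands us a proper rank-$n$ construction of that very $V$.

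For $(1)\Rightarrow(3)$, fix a proper rank-$n$ construction of a word $V$ with $X=X_V$, with generating sequence $\{v_{i,j}\}$ and levels $S_i$. I would use two facts. First, properness forces every word of $S_i$ to occur as an aligned block inside every word of $S_{i+1}$, since a word that is \emph{used} in a building literally appears in the defining concatenation; in particular $v_{i,1}$ is a subword of each $v_{i+1,j}$. Second, since $V$ is built from $S_i$ for every $i$, the word $V$ parses into level-$i$ words separated by blocks of $1$'s. Now let $x\in X$ with $x\neq 1^\mathbb{Z}$, so $x$ has a $0$, say $x(p)=0$. A window $x\rest[p-L,p+L]$ equals some $V\rest[q,q+2L]$, and the central $0$ corresponds to a $0$ of $V$ which must sit inside one of the level-$i$ blocks of the parse; taking $L$ larger than the maximal length of a level-$i$ word guarantees that this block lies within the window, so some $v_{i,j}$ occurs in $x$. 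Finally, given any finite subword $w$ of $V$, it is a subword of $v_{i,1}$ for all large $i$, hence of every $v_{i+1,j}$; choosing $i$ so that some $v_{i+1,j}$ occurs in $x$ shows $w$ occurs in $x$. As $w$ was arbitrary, the orbit of $x$ is dense.

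For $(3)\Rightarrow(2)$, I would start from an arbitrary rank-$n$ construction of a given rank-$n$ word $V$ with $X=X_V$ and telescope it to a proper one. Encode the construction by its \emph{usage graph}: the layered directed graph whose layer-$i$ nodes are $S_i$, with an edge $u\to w$ whenever $u\in S_i$ is used in the chosen building of $w\in S_{i+1}$; then $u$ is used in the composed building of $v_{i',j'}$ from $S_i$ exactly when $u$ reaches $v_{i',j'}$ in this graph. Telescoping along a sequence of levels, each of which is reached from the previous chosen level by \emph{every} node, produces a construction satisfying the usage clause of properness; these levels can be chosen of size exactly $n$ because $\symbrank(X)=n$ forbids $|S_i|\le n-1$ cofinally (else telescoping would exhibit a rank-$(n-1)$ construction of $V$). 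Thus everything reduces to the following \emph{Key Lemma}: assuming $(3)$, for each $i$ there is $i^*$ such that every $u\in S_i$ reaches every node of every layer $i'\ge i^*$.

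I would prove the Key Lemma by contradiction and compactness. If it fails, there are $u\in S_i$ and words $z_k=v_{i'_k,j'_k}$ with $|z_k|\to\infty$ whose composed buildings from $S_i$ omit $u$, i.e.\ each $z_k$ is built from $S_i\setminus\{u\}$. Since composing from $S_i$ up to level $i'_k$ at least doubles the number of constituent words per level, each $z_k$ is a concatenation of at least $2^{\,i'_k-i}$ level-$i$ words (each containing a $0$) separated by $1$-blocks, so it has $0$'s whose distance to both endpoints tends to infinity; centering $z_k$ at such a $0$ and passing to a subsequential limit in the compact space $X$ yields $y\in X$ with $y(0)=0$, hence $y\neq 1^\mathbb{Z}$, every finite window of which is built from $S_i\setminus\{u\}$. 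By $(3)$ the orbit of $y$ is dense, so every finite subword of $V$ occurs in $y$, and it remains to exhibit a subword of $V$ that cannot occur in any bi-infinite word built from $S_i\setminus\{u\}$. \emph{This last point is the main obstacle}: a stray occurrence of $u$, or of a word witnessing $u$ as a level-$i$ block, could in principle appear \emph{misaligned} with the level-$i$ structure, so ``built from $S_i\setminus\{u\}$'' does not by itself forbid $u$ as a subword. I expect to resolve it with a recognizability/unique-parsing statement for these constructions — that a sufficiently long subword of $X$ determines its level-$i$ parse — which upgrades the building-level omission of $u$ into a genuine combinatorial forbiddance and so contradicts density; such recognizability is available here precisely because $(3)$ makes $(X,\sigma)$ an essentially minimal Cantor system of finite rank.
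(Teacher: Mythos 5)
Your cycle of implications and your argument for (1)$\Rightarrow$(3) match the paper, and your reduction of (3)$\Rightarrow$(2) to the Key Lemma (the paper's claim that for any $i_0$, $j_0$ there is $i>i_0$ with $v_{i_0,j_0}$ an expected subword of every $v_{i,j}$) is also the paper's route, including the telescoping to levels of size exactly $n$. But your proof of the Key Lemma has a genuine gap, and the obstacle you flag at the end is fatal to the strategy rather than a technicality. Under hypothesis (3), the limit point $y$ you construct has a $0$, so $y\neq 1^{\mathbb{Z}}$ and its orbit is dense; hence \emph{every} finite subword of $V$ occurs in $y$, and the forbidden subword you hope to exhibit cannot exist, with or without recognizability. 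A concrete illustration: take $v_{0,1}=v_{0,2}=0$, $v_{1,1}=010$, $v_{1,2}=0110$, and $v_{i+1,j}=v_{i,j}v_{i,j}$ for $i\geq 1$. This is a legitimate rank-$2$ construction with all levels of size $2$; $V=(010)(010)\cdots$, condition (3) holds for $X_V$ (a single periodic orbit not containing $1^{\mathbb{Z}}$), the composed buildings of every $v_{i,1}$ omit $u=v_{1,2}$, and the limit word $y$ is a shift of $(010)^{\mathbb{Z}}$, which contains every subword of $V$. Your Key Lemma argument never invokes $\symbrank(X)=n$, so it would apply verbatim to this example, where its intended conclusion is false; the example has $\symbrank(X_V)=1$, so it does not refute the Lemma itself --- it shows the rank hypothesis is exactly what must carry the contradiction, and density alone cannot. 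Separately, the recognizability you propose to invoke is not available at this stage: unique readability is precisely what the ``good'' constructions of Section~6 are engineered to supply, via a nontrivial passage to a rank-$2n$ extension (Proposition~\ref{prop:6.3}), and even goodness does not prevent some $v_{i,j}$ from occurring as a subword of $v_{i,j'}$, as the paper remarks; moreover, finite topological rank of these subshifts is proved later (Theorem~\ref{thm:6.7}) using the present theorem, so deducing recognizability from ``finite rank'' here would be circular.

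The paper closes the gap with a tool you already have in hand, Proposition~\ref{prop:4.1}, and uses density positively instead of seeking a forbidden word. After the harmless normalization that every word of $S_i$ is an expected subword of $V$ (which your compactness step silently needs anyway, to know the $z_k$ are subwords of $V$ so that the limit lies in $X$), the sets $T_k\subseteq S_{i_0+k}$ of level words whose composed buildings omit $v_{i_0,j_0}$ satisfy $|T_k|\leq n-1$ and each element of $T_{k+1}$ is built from $T_k$; Proposition~\ref{prop:4.1} then assembles an honest rank-$(n-1)$ construction of an infinite word $W$ with $X_W\subseteq X_V$. Picking $x\in X_W$ with $x\neq 1^{\mathbb{Z}}$, hypothesis (3) forces $X_W=X_V$, so $X$ would have symbolic rank at most $n-1$, contradicting $\symbrank(X)=n$. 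Redirecting your contradiction from ``combinatorial forbiddance of $u$'' to ``lower-rank presentation of the same subshift'' repairs the proof, and the example above shows this redirection is essentially forced.
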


\begin{proof} We first show (1)$\Rightarrow$(3). Suppose $V$ is a rank-$n$ word such that $X=X_V$, and $V$ has a proper rank-$n$ construction with associated rank-$n$ generating sequence $\{v_{i,j}\}_{i\geq 0,1\le j\le n}$. For each $i\geq 0$, define $a_i=\max_{1\le j\le n}|v_{i,j}|$. Let  $x\in X_V$ and assume $x\ne 1^\mathbb{Z}$. There exists an $m\in \mathbb{Z}$ so that $x(m)=0$. Fix an $i\geq 0$ and consider the finite word $u=x\rest [m-a_{i+1}, m+a_{i+1}]$. By the definition of $X_V$, $u$ is a subword of $V$.  Since $V$ is built from $S_{i+1}$, by considering the length of $u$ we get that there is $1\le j_0\le n$ such that $v_{i+1,j_0}$ is a subword of $u$. By the properness of the rank-$n$ construction, $v_{i,1}$ is a subword of $v_{i+1,j_0}$, and hence a subword of $x$. This implies that the orbit of $x$ is dense in $X_V$.

Next we show (3)$\Rightarrow$(2). Let $V$ be a rank-$n$ word such that $X=X_V$. Suppose for any $x\in X_V$ such that $x\ne 1^\mathbb{Z}$, the orbit of $x$ is dense in $X_V$. We fix a rank-$n$ construction with associated rank-$n$ generating sequence $\{v_{i,j}\}_{i\geq 0,1\le j\le n_i}$, where $V=\lim_i v_{i,1}$. Since $V$ is a rank-$n$ word, it does not have a rank-$(n-1)$ construction; by telescoping if necessary, we may assume that $n_i=n$ for all $i\geq 0$. Also, without loss of generality, we assume that for all $i\geq 0$, all finite words in $S_i$ are expected subwords of $V$. In particular, if $i<i'$, then every finite word in $S_i$ is an expected subword of some word in $S_{i'}$.

Next we claim that for any $i_0>0$ and $1\leq j_0\leq n$, there exists $i>i_0$ such that for any $1\le j\le n$, $v_{i_0,j_0}$ is an expected subword of $v_{i,j}$. Assume not; then for any $i>i_0$ there is $1\le j\le n$ such that $v_{i_0,j_0}$ is not an expected subword of $v_{i,j}$. We define a sequence $\{T_k\}_{k\geq 0}$ of finite subsets of $\mathcal{F}$ as follows. Let $T_0=\{0\}$. For $k>0$, let $T_k$ be the set of all $v_{i_0+k,j}$ for $1\leq j\leq n$ such that $v_{i_0,j_0}$ is not an expected subword of $v_{i_0+k,j}$. Then for all $k\geq 0$, $T_k\subseteq S_{i_0+k}$ and so $|T_k|\leq n-1$. Also, for all $k\geq 0$, each element of $T_{k+1}$ is built from $T_k$. By Proposition~\ref{prop:4.1}, there is a rank-$(n-1)$ construction with associated rank-$(n-1)$ generating sequence $\{w_{k,\ell}\}_{k\geq 0, 1\leq\ell\leq m_k}$ such that for all $k\geq 0$ and $1\leq \ell\leq m_k$, $w_{k,\ell}\in T_k\subseteq S_{i_0+k}$. Let $W=\lim_k w_{k,1}$. Then every finite subword of $W$ is a subword of $V$. Hence $X_W$ is an closed invariant subset of $X_V$. It is clear from the construction of $W$ that there is $x\in X_W$ such that $x\neq 1^\mathbb{Z}$. Since the orbit of $x$ is dense in $X_V$, we get that $X_W=X_V$. This contradicts our assumption that $\symbrank(X_V)=n$.

Using the claim, and by telescoping, we obtain a proper rank-$n$ construction for $V$.

Finally, (2)$\Rightarrow$(1) is immediate.
\end{proof}

Note that the implications (1)$\Rightarrow$(3) in the above theorem do not require that $X$ is of symbolic rank $n$.

\begin{corollary}\label{cor:4.2} Let $n\geq 1$ and $X$ be an infinite subshift of symbolic rank $\leq n$. Suppose $X=X_V$ and $V$ has a proper rank-$n$ construction. Then $(X, \sigma)$ is an essentially minimal Cantor system. In particular, there is $k\in\mathbb{N}$ such that $0^k$ is not a subword of $V$.
\end{corollary}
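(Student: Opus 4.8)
The plan is to combine Theorem~\ref{thm:4.2} with two elementary orbit-closure arguments, one for each of the fixed points $0^{\mathbb{Z}}$ and $1^{\mathbb{Z}}$. First I would check that $(X,\sigma)$ really is a Cantor system, which amounts to showing $X=X_V$ is perfect. Here I would verify that $V$ is recurrent. By properness, the word $v_{i,1}\in S_i$ is used in the building of each $v_{i+1,j}$, hence is a subword of every $v_{i+1,j}$; iterating, $v_{i,1}$ is a subword of every $v_{i',j}$ with $i'>i$. Since $V$ is built from $S_{i'}$ and $V$ is infinite while the words of $S_{i'}$ have bounded length, infinitely many $S_{i'}$-words appear in $V$, each carrying a copy of $v_{i,1}$; thus $v_{i,1}$ occurs infinitely often in $V$. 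As every finite subword of $V$ lies inside some prefix $v_{i,1}$, every finite subword of $V$ recurs infinitely often, so $V$ is recurrent. Because $X$ is infinite, $V$ is not periodic, and hence by the basic fact that a recurrent non-periodic word generates a Cantor subshift, $X_V$ is a Cantor set and $(X,\sigma)$ is a Cantor system.

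Next I would extract the dynamical input. The hypothesis is exactly clause~(1) of Theorem~\ref{thm:4.2}, and by the remark following its proof the implication (1)$\Rightarrow$(3) does not require $X$ to have symbolic rank exactly $n$, so it applies under the stated weaker assumption $\symbrank(X)\le n$. Therefore every $x\in X$ with $x\neq 1^{\mathbb{Z}}$ has dense orbit. Essential minimality then splits into two cases. If $1^{\mathbb{Z}}\notin X$, every point of $X$ has dense orbit, so $X$ is minimal and a fortiori essentially minimal. If $1^{\mathbb{Z}}\in X$, then $\{1^{\mathbb{Z}}\}$ is a minimal set; and any minimal set $M\neq\{1^{\mathbb{Z}}\}$ must contain some $x\neq 1^{\mathbb{Z}}$, whose dense orbit forces $M=\overline{\mathrm{orbit}(x)}=X\ni 1^{\mathbb{Z}}$, contradicting the minimality of $M$ since $\{1^{\mathbb{Z}}\}\subsetneq M$ would be a proper closed invariant subset. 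Hence $\{1^{\mathbb{Z}}\}$ is the unique minimal set, and $(X,\sigma)$ is essentially minimal.

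Finally, for the ``in particular'' clause I would apply clause~(3) of Theorem~\ref{thm:4.2} to the other fixed point. If $0^{\mathbb{Z}}\in X$, then since $0^{\mathbb{Z}}\neq 1^{\mathbb{Z}}$ its orbit $\{0^{\mathbb{Z}}\}$ would be dense in $X$, forcing $X=\{0^{\mathbb{Z}}\}$ and contradicting that $X$ is infinite. Thus $0^{\mathbb{Z}}\notin X_V$, and by the definition of $X_V$ this says precisely that not every $0^k$ is a subword of $V$; i.e.\ there is some $k\in\mathbb{N}$ with $0^k$ not a subword of $V$.

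I expect the only genuinely delicate point to be the recurrence bookkeeping that underlies the Cantor-set conclusion, since one must confirm that properness forces each $v_{i,1}$ to recur infinitely often in $V$; the two fixed-point arguments are short and self-contained once Theorem~\ref{thm:4.2}(3) is in hand. I would also be careful to invoke the remark that (1)$\Rightarrow$(3) holds without assuming $\symbrank(X)=n$, so that the proof genuinely covers the hypothesis $\symbrank(X)\le n$.
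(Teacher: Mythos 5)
Your proof is correct and follows essentially the same route as the paper's: recurrence of $V$ (which the paper asserts and you justify via properness) together with infinitude gives the Cantor property, Theorem~\ref{thm:4.2}(3) handles the two cases according to whether $1^{\mathbb{Z}}\in X$, and excluding $0^{\mathbb{Z}}$ from $X_V$ yields the ``in particular'' clause exactly as in the paper. The details you supply beyond the paper's proof --- the recurrence bookkeeping, the uniqueness argument for the minimal set, and the explicit appeal to the remark that (1)$\Rightarrow$(3) needs only $\symbrank(X)\leq n$ rather than equality --- are accurate elaborations of steps the paper leaves implicit.
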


\begin{proof} Since $X=X_V$ where $V$ has a proper rank-$n$ construction, $V$ is recurrent. Since $X_V$ is infinite, it is a Cantor set. Now if $1^\mathbb{Z}\not\in X$, then by Theorem~\ref{thm:4.2} (3) $X$ is minimal; if $1^\mathbb{Z}\in X$ then $\{1^\mathbb{Z}\}$ is invariant and by Theorem~\ref{thm:4.2} (3) it is the unique minimal set in $X$. Thus $(X, \sigma)$ is an essentially minimal Cantor system. In either case, $0^\mathbb{Z}\not\in X_V$, thus there is $k\in\mathbb{N}$ such that $0^k$ is not a subword of $V$.
\end{proof}

Note that any rank-$1$ construction is proper, and thus any infinite rank-$1$ subshift is an essentially minimal Cantor system.

\begin{corollary}\label{cor:4.3} Let $n\geq 1$ and let $X$ be an infinite subshift of symbolic rank $n$. Then the following are equivalent:
\begin{enumerate}
\item[(1)] $X$ is minimal.
\item[(2)] There exists a rank-$n$ word $V$ such that $X=X_V$, and $V$ has a proper rank-$n$ construction with bounded spacer parameter.
\item[(3)] For any rank-$n$ word $V$ such that $X=X_V$, $V$ has a proper rank-$n$ construction with bounded spacer parameter.
\end{enumerate}
\end{corollary}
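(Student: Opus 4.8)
The plan is to reduce the whole corollary to a single dichotomy: for a subshift that already possesses a proper rank-$n$ construction, minimality is equivalent to the absence of the fixed point $1^\mathbb{Z}$, and the bounded spacer condition is exactly what rules out $1^\mathbb{Z}$. I would then prove the cycle $(1)\Rightarrow(3)\Rightarrow(2)\Rightarrow(1)$, where $(3)\Rightarrow(2)$ is immediate because $\symbrank(X)=n$ already guarantees the existence of at least one rank-$n$ word $V$ with $X=X_V$.

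The technical heart, which I would isolate first, links spacers to runs of $1$s. Since every word of $\mathcal{F}$ begins and ends with $0$, in any building $w=u_1 1^{s_1}u_2\cdots u_k 1^{s_k}u_{k+1}$ with $u_\ell\in\mathcal{F}$ each spacer $1^{s_\ell}$ is flanked by $0$s, so it is a maximal run of $1$s of length exactly $s_\ell$, while every other maximal run of $1$s lies inside some $u_\ell$; in particular runs never merge across a word boundary. A short induction on the level then shows that if a proper rank-$n$ construction has spacer parameter bounded by $M$, then every maximal run of $1$s in each $v_{i,j}$, and hence in $V=\lim_i v_{i,1}$, has length at most $M$; thus $1^{M+1}$ is not a subword of $V$ and $1^\mathbb{Z}\notin X_V$. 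For the converse direction of the observation, properness forces every word of $S_i$ to be used in the building of the prefix $v_{i+1,1}$ of $V$, so every $v_{i,j}$, and therefore every spacer $1^{s_\ell}$ occurring in the construction, is a subword of $V$; hence if $1^\mathbb{Z}\notin X_V$, which by the definition of $X_V$ means that some $k$ makes $1^k$ fail to be a subword of $V$, then every spacer has length $<k$ and the spacer parameter is automatically bounded.

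With this in place the implications are short. For $(2)\Rightarrow(1)$, a proper construction of $V$ with bounded spacer parameter yields $1^\mathbb{Z}\notin X$ by the first half of the observation; then $X$ is essentially minimal by Corollary~\ref{cor:4.2}, every $x\in X$ satisfies $x\neq 1^\mathbb{Z}$, and clause $(3)$ of Theorem~\ref{thm:4.2} (available since clause $(1)$ of that theorem holds) makes every orbit dense, so $X$ is minimal. For $(1)\Rightarrow(3)$, let $V$ be any rank-$n$ word with $X=X_V$; minimality makes every orbit dense, so clause $(3)$ and hence clause $(2)$ of Theorem~\ref{thm:4.2} give a proper rank-$n$ construction for $V$. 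Since $X$ is minimal and infinite we have $1^\mathbb{Z}\notin X$ (otherwise $\{1^\mathbb{Z}\}$ would be a proper closed invariant subset), so the second half of the observation shows the spacer parameter of this proper construction is bounded.

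The step I expect to be the main obstacle, and would write out most carefully, is the second half of the observation: it relies on properness to ensure that each $v_{i,j}$, and thus each spacer, actually appears as a subword of $V$, so that a bound on the runs of $1$s in $V$ can be pulled back to a bound on the spacer parameter. The flanking-by-$0$s argument, which prevents runs of $1$s from merging across word boundaries, is what makes both directions clean, and I would establish it explicitly before running the induction.
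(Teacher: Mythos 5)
Your proposal is correct and takes essentially the same route as the paper: the same cycle $(1)\Rightarrow(3)\Rightarrow(2)\Rightarrow(1)$, reducing everything via Theorem~\ref{thm:4.2} to whether $1^\mathbb{Z}\in X_V$, with bounded spacer parameter corresponding (under properness) to a bound on the runs of $1$s in $V$. Your flanking-by-$0$s induction just spells out in detail what the paper asserts briefly, and your use of properness to place every $v_{i,j}$ (hence every spacer) inside $V$ matches the paper's normalization that all words of $S_i$ are expected subwords of $V$.
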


\begin{proof} To see (1)$\Rightarrow$(3), suppose $X$ is minimal. Then $1^{\mathbb{Z}}\not\in X$ and clause (3) of Theorem~\ref{thm:4.2} holds. By Theorem~\ref{thm:4.2}, for any rank-$n$ word $V$ such that $X=X_V$, $V$ has a proper rank-$n$ construction with associate rank-$n$ generating sequence $\{v_{i,j}\}_{i\geq 0, 1\leq j\leq n}$. Without loss of generality, we may assume that every word in this sequence is an expected subword of $V$. We claim that this given proper rank-$n$ construction has bounded spacer parameter. Otherwise there are arbitrarily large $k$ with $1^k$ as a subword of $V$, and then $1^{\mathbb{Z}}\in X_V=X$, a contradiction.

The implication (3)$\Rightarrow$(2) is immediate.

Finally, we prove (2)$\Rightarrow$(1). Suppose $V$ is a rank-$n$ word such that $X=X_V$, and $V$ has a proper rank-$n$ construction with bounded spacer parameter. Then $1^\mathbb{Z}\not\in X_V$, and by Theorem~\ref{thm:4.2} $X$ is minimal.
\end{proof}

Again, we remark that the implications (2)$\Rightarrow$(1) of the above corollary do not require that $X$ be a subshift of symbolic rank $n$.

\section{Finite symbolic rank and finite topological rank\label{sec:6}}
In this section we prove that minimal subshifts of finite symbolic rank have finite topological rank, and conversely, any minimal Cantor system of finite topological rank is either an odometer or conjugate to a subshift of finite symbolic rank. Together with previous results (\cite{DM} and \cite{DDMP}), our results show that the three notions of finite rank for minimal expansive Cantor systems all coincide with each other. 

\subsection{From finite symbolic rank to finite topological rank}

We first consider minimal subshifts of finite symbolic rank.

The following concept of unique readability will be useful in our proofs to follow. Let $n\geq 1$. Fix a symbolic rank-$n$ construction with associated rank-$n$ generating sequence $\{v_{i,j}\}_{i\geq 0, 1\leq j\leq n}$. Let $V=\lim_i v_{i,1}$. Without loss of generality assume every $v_{i,j}$ is an expected subword of $V$, and that for each $i\geq 1$, the words $v_{i,1}, \dots, v_{i,n}$ are distinct. For $x\in X_V$, a {\em reading} of $x$ is a sequence $\{E_i\}_{i\geq 0}$ satisfying, for each $i\geq 0$,
\begin{enumerate}
\item[(i)] each element of $E_i$ is a pair $(k,j)$, where $1\leq j\leq n$ and $k$ is the starting position of an occurrence of $v_{i,j}$ in $x$;
\item[(ii)] if $(k_1,j_1), (k_2, j_2)\in E_i$ and $k_1<k_2$, then $k_1+|v_{i,j_1}|\leq k_2$;
\item[(iii)] $E_0=\{(k,j)\,:\, x(k)=0 \mbox{ and } j=1\}$; and
\item[(iv)] for each $(k,j)\in E_{i}$, there is exactly one $(k',j')\in E_{i+1}$ such that $k'\le k$ and $k'+|v_{i+1,j'}|\ge k+|v_{i,j}|$. 
\end{enumerate}
If every $x\in X_V$ has a unique reading, we say that $\{v_{i,j}\}_{i\geq 0,1\le j\le m}$ has {\em unique readability}, and we call an occurrence (starting at position) $k$ of $v_{i,j}$ in $x$  {\em expected} if $(k,j)\in E_i$ for the unique reading of $x$. Every rank-$1$ generating sequence whose induced infinite rank-$1$ word is not periodic has unique readability (Proposition 2.29 of \cite{GH}).

\begin{lemma}\label{lem:6.1}  Let $n\geq 1$, $\{v_{i,j}\}_{i\geq 0,1\le j\le n}$ be a rank-$n$ generating sequence, and $V=\lim_i v_{i,1}$. Then any $x\in X_V$ has a reading.
\end{lemma}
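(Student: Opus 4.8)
The plan is to exploit the canonical hierarchical parse that $V$ itself carries and then transfer it to an arbitrary $x\in X_V$ by a compactness argument. Since $V$ is built from $S_i$ starting with $v_{i,1}$ for every $i\geq 0$, the word $V$ decomposes as a concatenation of expected occurrences of words in $S_i$ separated by blocks of $1$s; because all spacers consist of $1$s, every $0$-position of $V$ lies inside exactly one such expected occurrence at each level. Moreover, composing the building of $V$ from $S_{i+1}$ with the buildings of the $S_{i+1}$-words from $S_i$ shows that the level-$i$ expected occurrences nest inside the level-$(i+1)$ ones. Thus $V$ carries a canonical reading in the obvious sense, and the only remaining task is to globalize the corresponding local parses of the finite windows of $x$.

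First I would record, for each window, the parse inherited from $V$. Fix $x\in X_V$; for every $m$ the word $x\rest[-m,m]$ occurs in $V$, say $x(t)=V(t+\tau_m)$ for $-m\leq t\leq m$ and some shift $\tau_m\in\mathbb{Z}$. For a $0$-position $p$ of $x$ with $|p|\leq m$ and a level $i$, let $g_m(i,p)=(c,j)$ record that, in the canonical reading of $V$ translated back by $\tau_m$, the level-$i$ expected occurrence covering $p$ is an occurrence of $v_{i,j}$ starting at position $c$; this value is defined whenever that occurrence lies inside $[-m,m]$, which holds for all sufficiently large $m$ once $i$ and $p$ are fixed, since $c\in\{p-a_i+1,\dots,p\}$ with $a_i=\max_{1\leq j\leq n}|v_{i,j}|$. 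The crucial finiteness observation is that for each fixed pair $(i,p)$ the value $g_m(i,p)$ ranges over a finite set of at most $n\cdot a_i$ possibilities.

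Next I would extract a coherent limit. Enumerating all pairs $(i,p)$ with $x(p)=0$ and diagonalizing (equivalently, invoking compactness of the product of the finite value-sets), I obtain an increasing sequence of windows along which $g_m(i,p)$ is eventually constant for every $(i,p)$; call the limit $g(i,p)$. Setting $E_i=\{g(i,p)\,:\, x(p)=0\}$, I would verify the four clauses. Clause (i) holds because each stabilized value witnesses a genuine occurrence of $v_{i,j}$ in $x$ (for large $m$ the whole occurrence sits inside the window, where $x$ and a shift of $V$ agree), and clause (ii) is inherited from the non-overlapping, ordered expected occurrences of each local parse. Clause (iii) is forced: at level $0$ the only word is $v_{0,1}=0$, so $g(0,p)=(p,1)$ for every $0$-position $p$. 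For clause (iv), existence follows because the level-$i$ occurrence covering $p$ nests inside the level-$(i+1)$ occurrence covering $p$ in every local parse, hence in the limit; uniqueness follows because any two elements of $E_{i+1}$ covering the level-$i$ occurrence would both contain $p$, contradicting the non-overlapping property of clause (ii).

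The hard part will be the simultaneous stabilization across infinitely many levels together with the boundary effects: for a single window the inherited parse is reliable only on occurrences lying strictly inside it, and distinct occurrences of $x\rest[-m,m]$ in $V$ may a priori carry different local parses. The finite-branching bound on $g_m(i,p)$ is exactly what lets the diagonal/compactness step reconcile these across all $(i,p)$ at once, and the non-overlapping clause (ii) is what converts the mere existence of a covering level-$(i+1)$ occurrence into the uniqueness demanded by clause (iv). Once these two points are handled, the limit object $\{E_i\}_{i\geq 0}$ is a reading of $x$.
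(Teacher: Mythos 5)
Your proof is correct, and it reaches the lemma by a visibly different organization than the paper's, even though both ultimately rest on the same compactness mechanism. The paper anchors at a fixed $0$-position $k_0$ of $x$ and splits into four cases according to the distribution of $0$s in $x$ (namely $x=1^{\mathbb{Z}}$, a leftmost $0$, a rightmost $0$, and $0$s unbounded in both directions); in each case, for every level $i$ it locates an occurrence of a level-$i$ word (or, in the two-sided case, of a pattern $v_{i,j_1}1^{s_1}v_{i,j_2}1^{s_2}v_{i,j_3}$) covering $k_0$, refines it downward through the fixed buildings to obtain finite partial readings $E^i_t$ for $t\le i$, and then takes a single accumulation point in the compact space $\{0,1\}^{\mathbb{N}\times\mathbb{Z}\times\{1,\cdots,n\}}$. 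You instead observe that $V$ itself carries one coherent multi-level parse (because the building of $V$ from $S_i$ is the composition of its building from $S_{i+1}$ with the fixed buildings), pull this parse back through a chosen occurrence of each window $x\rest[-m,m]$ in $V$, and diagonalize over the countably many pairs of a level and a $0$-position using the finite-branching bound $n\cdot a_i$. Your route buys uniformity: the anchoring trick and all four cases disappear (for instance $x=1^{\mathbb{Z}}$ yields $E_i=\varnothing$ automatically), and clauses (ii) and (iv) reduce to the remark that at each stage $m$ all recorded data come from a single coherent parse of $V$, so non-overlap and nesting pass to the stabilized limit; the cost is that the coherence of $V$'s canonical parse across levels must be made explicit, which the paper's level-by-level top-down construction gets for free. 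Two cosmetic points to tighten: at level $0$ all words $v_{0,j}$ equal the single word $0$, so you should simply decree the label $j=1$ there in order to meet clause (iii) verbatim; and in the uniqueness half of clause (iv), the degenerate case of two elements of $E_{i+1}$ with the same starting position but different labels is ruled out because, at each stage, both come from one parse in which distinct expected occurrences are disjoint, so they already coincide before passing to the limit.
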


\begin{proof} We fix a rank-$n$ construction of $V$ with associated rank-$n$ generating sequence $\{v_{i,j}\}_{i\geq 0, 1\leq j\leq n}$. Without loss of generality we may assume that every $v_{i,j}$ is an expected subword of $V$, and that for each $i\geq 1$, the words $v_{i,1},\dots, v_{i,n}$ are distinct. For each $i\geq 0$, define $a_i=\max_{1\le j\le n}|v_{i,j}|$ and $b_i=\inf_{1\leq j\leq n}|v_{i,j}|$. Then $b_{i+1}\geq 2b_i$ for all $i\geq 0$.

We consider several cases. Case 1: $x=1^\mathbb{Z}$. In this case a unique reading is given by $E_i=\varnothing$ for all $i\geq 0$. 

Case 2: There exists $k_0\in\mathbb{Z}$ such that $x(k_0)=0$ and $x(k)=1$ for all $k<k_0$. First fix any $i\geq 0$. Define $u_i=x\rest[k_0-a_i, k_0+a_i]$. Since $u_i$ is a subword of $V$, $V$ is built from $\{v_{i,1}, \dots, v_{i,n}\}$, and $a_i\geq |v_{i,j}|$ for all $1\leq j\leq n$, we have that for some $1\leq j_0\leq n$, $k_0$ is the starting position of an occurrence of $v_{i,j_0}$ in $x$. Now following the rank-$n$ construction of $V$, by an induction on $t=i, i-1,\dots, 0$, we define collections $E^i_{t}$ for $t\le i$ as follows. First let $E^i_{i}=\{(k_0,j_0)\}$. Suppose now $E^i_{t}$ has been defined, which is a collection of some pairs $(k,j)$, where $k$ is the starting position of an occurrence of $v_{t,j}$ in $x$. Assume that for $(k_1, j_1), (k_2,j_2)\in E^i_t$ with $k_1<k_2$, we have $k_1+|v_{t,j_1}|\leq k_2$. Now for each $(k,j)\in E^i_t$, the building of $v_{t,j}$ from $\{v_{t-1,1},\dots, v_{t-1,n}\}$ in the fixed rank-$n$ construction gives rise to pairs $(k',j')$, where $v_{t-1,j'}$ occurs at position $k'$ and this occurrence corresponds to the occurrence of $v_{t-1,j'}$ in the building of $v_{t,j}$ as an expected subword. We put all such $(k',j')$ in $E^i_{t-1}$. It is clear that for $(k_1',j_1'), (k_2',j_2')\in E^i_{t-1}$ with $k_1'<k_2'$, we have $k_1'+|v_{t-1,j_1'}|\leq k_2'$. It is also clear that for each $(k',j')\in E^i_{t-1}$, there is exactly one $(k,j)\in E^i_{t}$ such that $k\le k'$ and $k+|v_{t,j}|\ge k'+|v_{t-1,j'}|$. Finally, we note that $E^i_0=\{(k,j)\,:\, x(k)=0, k_0\le k\le k_0+|v_{i,j}|-1, \mbox{ and } j=1\}$. This finishes the definition of $E^i_t$ for $t\leq i$. We have that for all $k_0\leq k\leq k_0+b_i+1$, $(k,1)\in E^i_0$ iff $x(k)=0$.

For $i\geq 0$, define $e_i\in \{0,1\}^{\mathbb{N}\times\mathbb{Z}\times\{1,\cdots,n\}}$ by letting $e_i(t, k,j)=1$ iff $t\le i$ and $(k,j)\in E^i_{t}$. Since $\{0,1\}^{\mathbb{N}\times\mathbb{Z}\times\{1,\cdots,n\}}$ is compact, there exists an accumulation point $e$ of $\{e_i\}_{i\geq 0}$. For each $t\geq 0$, define $E_t=\{(k,j)\,:\, e(t,k,j)=1\}$. Since $\{b_i\}_{i\geq 0}$ is strictly increasing, we conclude that for all $k\geq k_0$, $(k,1)\in E_0$ iff $x(k)=0$. The other properties of a reading are also easily verified. Thus $\{E_t\}_{t\geq 0}$ is a reading of $x$.

Case 3: There exists $k_0\in\mathbb{Z}$ such that $x(k_0)=0$ and $x(k)=1$ for all $k>k_0$. This case is similar to Case 2.

Case 4: For any $k\in\mathbb{Z}$ there are $k_1<k<k_2$ such that $x(k_1)=x(k_2)=0$.
Let $k_0$ be an integer satisfying $x(k_0)=0$. For $i\geq 0$, let $\ell_{i,1}$ be the $(2a_i+1)$th natural number such that $x(k_0+\ell_{i,1})=0$; let $\ell_{i,2}$ be the $(2a_i+1)$th natural number such that $x(k_0-\ell_{i,2})=0$. Define $u_i=x\rest[k_0-\ell_{i,2}, k_0+\ell_{i,1}]$. Then $u_i$ is a subword of $V$. Since $V$ is built from $\{v_{i,1},\dots, v_{i,n}\}$, by the definition of $a_i$, there exist $m_i<k_0$ and a subword $w_i$ of $V$ such that 
\begin{itemize}
\item $w_i$ is of the form $v_{i,j_1}1^{s_1}v_{i,j_2}1^{s_2}v_{i,j_3}$, where $1\leq j_1, j_2, j_3\leq n$ and $s_1, s_2\geq 0$,
\item $m_i$ is the starting position of an occurrence of $w_i$ in $x$, and
\item $m_i+|v_{i,j_1}1^{s_1}|\le k_0\le m_i+|v_{i,j_1}1^{s_1}v_{i,j_2}|-1$. 
\end{itemize}
Now we proceed as in the proof of Case 2 to define $E^i_t$ for all $t\leq i$ and finally obtain a reading $\{E_t\}_{t\geq 0}$ of $x$ by compactness. 
\end{proof}

Next we define a concept that guarantees unique readability. Let $n\geq 1$. We say a rank-$n$ construction with associated rank-$n$ generating sequence $\{v_{i,j}\}_{i\geq 0, 1\leq j\leq n}$ is {\em good} if it is proper and for any $i\geq 0$ and $1\le j\le n$, $v_{i,j}$ is not of the form $$\alpha 1^{s_1}v_{i,j_1}1^{s_2}v_{i,j_2}\cdots v_{i,j_{k-1}}1^{s_k}\beta $$ 
where $k\geq 1$, $\alpha$ is a nonempty suffix of some $v_{i,j_{k}}$, and $\beta$ is a nonempty prefix of some $v_{i,j_{k+1}}$. If a rank-$n$ construction is good, we say that the infinite word $V=\lim_i v_{i,1}$ is {\em good}. 

\begin{lemma} Consider a good rank-$n$ construction with associate rank-$n$ generating sequence $\{v_{i,j}\}_{i\geq 0, 1\leq j\leq n}$. Then $\{v_{i,j}\}_{i\geq 0, 1\leq j\leq n}$ has unique readability.
\end{lemma}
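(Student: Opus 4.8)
The plan is to prove that each $x\in X_V$ has a \emph{unique} reading; existence is supplied by Lemma~\ref{lem:6.1}, so only uniqueness remains. First I would record a structural observation valid for \emph{any} reading $\{E_i\}_{i\geq 0}$ of $x$. By clause (iii) every position $k$ with $x(k)=0$ lies in $E_0$, and by iterating clause (iv) upward this level-$0$ occurrence is contained in an occurrence recorded in $E_i$ for every $i$. Hence no $0$ can lie strictly between two consecutive occurrences recorded in $E_i$ (by clause (ii) these are disjoint and linearly ordered), so the complement of their union consists of blocks of $1$'s. Thus every reading induces, at each level $i$, a decomposition $x=\cdots 1^{t_{-1}}v_{i,\ast}1^{t_0}v_{i,\ast}1^{t_1}\cdots$ of $x$ into expected level-$i$ words separated by (possibly empty) spacer blocks; I will call this the level-$i$ \emph{skeleton} of the reading.

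I would then prove $E_i=F_i$ for any two readings $\{E_i\}$, $\{F_i\}$ by induction on $i$, the base case $E_0=F_0$ being clause (iii). For the inductive step I assume the two readings share the same level-$i$ skeleton and show they must share the same level-$(i+1)$ skeleton. If the two readings had the same set of left endpoints of level-$(i+1)$ words, they would have to assign the same word at each endpoint: if words $W\neq W'$ occurred at a common endpoint $p$ with $|W|<|W'|$, then the last letter of $W'$ (which is $0$, since every $v_{i+1,\ast}\in\mathcal{F}$) would fall inside the spacer block following $W$ in the other reading, which is all $1$'s --- a contradiction. So it suffices to rule out a position $p$ that is the left endpoint of a level-$(i+1)$ word in exactly one reading; say it starts $v_{i+1,j}$ in the $E$-reading, while by the covering property of clause (iv) it is strictly interior to an $F$-word $W_0=v_{i+1,j'}$ with left endpoint $k'<p$.

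Next I would read off the $F$-skeleton along the span of the $E$-word $v_{i+1,j}$ and isolate two phenomena. Either some level-$(i+1)$ word of one reading \emph{straddles} a boundary of the other --- e.g.\ $v_{i+1,j}$ begins inside $W_0$ and ends inside a later $F$-word, so that $v_{i+1,j}=\alpha 1^{s_1}v_{i+1,j_1}\cdots 1^{s_k}\beta$ with $\alpha$ a nonempty proper suffix of $W_0$ and $\beta$ a nonempty proper prefix of a later $F$-word --- or else one reading \emph{retiles} an entire word of the other, e.g.\ the whole of $W_0$ is cut by the $E$-reading into two or more level-$(i+1)$ words, giving $W_0=\alpha 1^{s_1}v_{i+1,j_1}\cdots 1^{s_k}\beta$ with $\alpha,\beta$ now \emph{full} level-$(i+1)$ words. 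In both cases we obtain an occurrence of a level-$(i+1)$ word in exactly the shape forbidden by goodness at level $i+1$; here it is essential that the good condition allows $\alpha,\beta$ to be whole words (covering retiling) as well as proper suffixes/prefixes (covering genuine straddling). This contradiction forces $E_{i+1}=F_{i+1}$, completing the induction and hence the unique readability.

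The main obstacle is organizing this step into the two clean alternatives above: starting from the single interior endpoint $p$, one must follow the $E$-words rightward (and, when $v_{i+1,j}$ is contained in $W_0$, also leftward) and verify that the first genuine disagreement between the two skeletons is always either a straddle or a full retiling, never anything outside the forbidden list. This needs a short extremal argument to locate a witnessing straddle/retiling, together with some care in the cases where an $E$-word shares the left endpoint $k'$ of $W_0$ but is shorter or longer than $W_0$ (the latter straddles $W_0$'s right boundary). A secondary, routine matter is the degenerate shapes of $x$ from Lemma~\ref{lem:6.1}: for $x=1^{\mathbb{Z}}$ all $E_i$ are empty, and in the one-sided cases the skeleton has a genuine extreme occurrence, which only simplifies the endpoint comparison.
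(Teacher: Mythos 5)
Your proposal is correct and follows essentially the same route as the paper's proof: both compare the tilings induced by two readings at a level where they disagree and invoke goodness --- crucially using that $\alpha$ and $\beta$ there may be whole words as well as proper suffixes/prefixes --- to forbid every resulting overlap pattern, your straddle and retiling alternatives corresponding to the paper's two cases of a word of one reading starting at the same position as, or strictly inside, a word of the other. The differences are purely organizational (you run an induction on levels and split on whether the sets of left endpoints coincide, while the paper argues locally at a minimal disagreeing level), so the mathematical content is the same.
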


\begin{proof} Let $x\in X_V$. Let $\{E_i\}_{i\geq 0}$ be a reading of $x$, which exists by Lemma~\ref{lem:6.1}. By the definition of a reading, we have that for each $i\geq 0$, $E_i$ gives a way in which $x$ is built from $\{v_{i,1}, \dots, v_{i,n}\}$. Now suppose $\{E_i'\}_{i\geq 0}$ is another reading of $x$, and suppose $i\geq 0$ is the smallest such that $E_i\neq E'_i$. Without loss of generality, let $(k,j)\in E_i\setminus E'_i$. 

Consider two cases. Case 1: there is $j'\neq j$ such that $(k,j')\in E'_i$. In this case without loss of generality assume $|v_{i,j}|>|v_{i,j'}|$.
Then $v_{i,j}$ can be written in the form $v_{i,j'}1^{s_1}v_{i,j_1}\cdots v_{i,j_\ell}1^{s_{\ell+1}}\beta$ for some $\ell\geq 0$ and nonempty $\beta$ where $\beta$ is a prefix of some $v_{i,j_{\ell+1}}$. This contradicts the assumption that our rank-$n$ construction is good. 

Case 2: there is no $j'$ such that $(k,j')\in E'_i$. By the definition of a reading, there is a unique $(k',j')\in E'_i$ where $k'<k$ such that $k\leq k'+|v_{i,j'}|$. If $k'+|v_{i,j'}|\leq k+|v_{i,j}|$ then $v_{i,j}$ can be written in the form $\alpha 1^{s_1}v_{i,j_1}\cdots v_{i,j_{\ell}}1^{s_{\ell+1}}\beta$, contradicting the assumption that our rank-$n$ construction is good. If $k'+|v_{i,j'}|>k+|v_{i,j}|$ then $v_{i,j'}$ can be written in the form $\alpha 1^{s_1}v_{i,j_1}\cdots v_{i,j_{\ell}}1^{s_{\ell+1}}\beta$, again contradicting our assumption.
\end{proof}

Note that the definition of goodness does not rule out the possibility that some $v_{i,j}$ is a subword of $v_{i,j'}$ for $j'\neq j$.

\begin{lemma}\label{lem:6.3} Suppose $V$ has a good rank-$n$ construction with associated rank-$n$ generating sequence $\{v_{i,j}\}_{i\geq 0, 1\leq j\leq n}$. The for any $i\geq 0$, $1\leq j\leq n$ and $k\in\mathbb{Z}$, the set
$$ \{x\in X_V\,:\, \mbox{ there is an expected occurrence of $v_{i,j}$ at position $k$}\} $$
is clopen in $X_V$.
\end{lemma}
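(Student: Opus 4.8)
The plan is to exhibit the unique reading as a continuous function of $x$, so that the set in question becomes the preimage of a clopen coordinate set. I would encode readings as points of the compact metrizable space $\mathcal{R}=\{0,1\}^{\mathbb{N}\times\mathbb{Z}\times\{1,\dots,n\}}$ (the same space used in the proof of Lemma~\ref{lem:6.1}), identifying a reading $\{E_i\}_{i\geq 0}$ of $x$ with the point $e$ determined by $e(i,k,j)=1$ iff $(k,j)\in E_i$. By Lemma~\ref{lem:6.1} every $x\in X_V$ has a reading, and since the construction is good the preceding lemma guarantees it is unique; write $\Phi(x)\in\mathcal{R}$ for this unique reading. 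Then the target set is exactly $\Phi^{-1}(C)$, where $C=\{e\in\mathcal{R}: e(i,k,j)=1\}$ is clopen in $\mathcal{R}$. Hence it suffices to prove that $\Phi$ is continuous.

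To prove continuity I would show that the relation ``$e$ encodes a reading of $x$'' is closed in $X_V\times\mathcal{R}$; together with uniqueness this says the graph of $\Phi$ is closed, and a closed-graph map into a compact metrizable space is continuous. Concretely, I would take $x_m\to x$ in $X_V$, use compactness of $\mathcal{R}$ to pass to a subsequence along which $\Phi(x_m)\to e$, and argue that $e$ must be a reading of $x$, whence $e=\Phi(x)$ by uniqueness; the value of the coordinate $(i,k,j)$ can then be read off in the limit for both the set and its complement, showing each is closed. Everything thus reduces to the single assertion: if $x_m\to x$ and $\Phi(x_m)\to e$, then $e$ is a reading of $x$.

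It remains to check that the four defining conditions of a reading pass to the limit. For a fixed index, convergence in $\mathcal{R}$ means the corresponding $\{0,1\}$-value is eventually constant, while $x_m\to x$ means the words $x_m$ and $x$ agree on ever longer windows; each of conditions (i) (every flagged pair is a genuine occurrence of $v_{i,j}$ in $x$), (ii) (flagged occurrences at level $i$ do not overlap), and (iii) ($E_0$ consists exactly of the positions of $0$'s, with $j=1$) is witnessed on a finite window and so survives both limits routinely.

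The main obstacle is condition (iv), which couples level $i$ with level $i+1$. Given $(k,j)\in E_i^e$, the level-$(i+1)$ block containing it must start in the finite window $(k-a_{i+1},k]$, where $a_{i+1}=\max_{1\le j'\le n}|v_{i+1,j'}|$; since there are only finitely many admissible pairs $(k',j')$, a pigeonhole over $m$ isolates a single pair that occurs for infinitely many $m$, and passing to the limit places it in $E_{i+1}^e$, yielding existence. Uniqueness of this parent block then follows because two distinct containing pairs would both lie in $E_{i+1}^{x_m}$ for all large $m$, contradicting condition (iv) for the genuine reading $\Phi(x_m)$. This establishes that $e$ is a reading of $x$, hence $\Phi$ is continuous and the set is clopen. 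I expect the delicate point to be exactly this survival of a unique parent block at level $i+1$ in the limit; conditions (i)--(iii) are soft, and the whole argument rests on combining compactness of $\mathcal{R}$ with the uniqueness of readings supplied by goodness.
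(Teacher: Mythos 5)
Your proof is correct, and it takes a genuinely different route from the paper's. Writing $E_{i,j,k}$ for the set in question, the paper dispatches Lemma~\ref{lem:6.3} in two sentences via an explicit local criterion: $x\in E_{i,j,k}$ iff $x(k+t)=v_{i,j}(t)$ for all $0\leq t\leq |v_{i,j}|-1$, and for every $j'\neq j$ and $k'\leq k$ with $k'+|v_{i,j'}|\geq k+|v_{i,j}|$ the word $v_{i,j'}$ fails to occur at $k'$. Only finitely many pairs $(k',j')$ qualify, so this is a finite Boolean combination of cylinder conditions, hence clopen; goodness is what makes the equivalence true (a covering occurrence of a different word, or a non-expected occurrence straddling the level-$i$ tiling, would produce exactly the forbidden form $\alpha 1^{s_1}v_{i,j_1}\cdots 1^{s_k}\beta$). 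You instead prove the stronger fact that the reading map $\Phi$ is continuous, via closedness of the relation ``$e$ is a reading of $x$'' together with compactness of $\{0,1\}^{\mathbb{N}\times\mathbb{Z}\times\{1,\dots,n\}}$ and uniqueness of readings, and then pull back a cylinder. Your treatment of the one nontrivial point --- survival of condition (iv) in the limit, by confining the parent block's start to the finite window $(k-a_{i+1},k]$, pigeonholing over $m$ for existence, and invoking (iv) for the genuine readings $\Phi(x_m)$ for uniqueness --- is sound; note that coordinatewise convergence in this product space is eventual constancy in each coordinate, which is exactly what your limit-passing uses, and the closed-graph criterion for maps into a compact metrizable space is standard. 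As for what each approach buys: the paper's criterion is quantitative, showing that membership in $E_{i,j,k}$ is determined by the coordinates of $x$ in a window of radius roughly $\max_{j'}|v_{i,j'}|$ about position $k$, while your argument is softer and produces no explicit window. In exchange, your argument uses goodness only through the unique-readability lemma, so it proves clopen-ness for any generating sequence admitting unique readability; in particular it yields Proposition~\ref{prop:6.4} directly, which the paper instead obtains by a detour through the good cover constructed in Proposition~\ref{prop:6.3}. Continuity of $\Phi$ is also a reusable statement: it gives at once that every set defined from the unique reading (such as the sets $A_m$ and $B_{j,s,j'}$ in the proof of Proposition~\ref{prop:6.5}) is clopen.
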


\begin{proof} Let $E_{i,j,k}$ denote the set in question. Then $x\in E_{i,j,k}$ iff for any $0\leq t\leq |v_{i,j}|-1$, $x(k+t)=v_{i,j}(t)$ and for any $1\leq j'\leq n$ and $k'\leq k$, if $j'\neq j$ and $k'+|v_{i,j'}|\geq k+|v_{i,j}|$, then there is $0\leq s\leq |v_{i,j'}|-1$ such that $x(k'+s)\neq  v_{i,j'}(s)$. This implies that $E_{i,j,k}$ is clopen.
\end{proof}

\begin{proposition}\label{prop:6.3} Let $n\geq 1$ and $X$ be an infinite subshift of symbolic rank $\leq n$. Suppose $X=X_V$ and $V$ has a proper rank-$n$ construction. Then there exists a word $W$ with a good rank-$2n$ construction such that $X$ is a factor of $X_W$. Moreover, if in addition $X$ is minimal, then $W$ can be chosen so that $X_W$ is minimal.
\end{proposition}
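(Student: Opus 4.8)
The plan is to convert the given proper rank-$n$ construction of $V$ into a \emph{good} construction by decorating the block boundaries with a pattern that cannot occur by accident, and then to recover $X=X_V$ as a sliding block factor of the decorated subshift. First I would invoke Corollary~\ref{cor:4.2}: since $X=X_V$ is infinite and $V$ has a proper rank-$n$ construction, $(X,\sigma)$ is essentially minimal and there is a fixed $k$ such that $0^k$ is not a subword of $V$. This forbidden pattern is the tool for marking, because a run of at least $k$ zeros can be planted at genuine block starts and will then occur nowhere else, so it pins down the parsing of every point.

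Next I would define a decorated construction $\{w_{i,j}\}$ over $\{0,1\}$ by recoding each $v_{i,j}$ so that its expected starting positions carry a $0^k$-delimiter while no run of $k$ zeros appears anywhere else. Because $0^k\notin V$ guarantees the delimiter is unambiguous, any reading of a point of $X_W$ must align its level-$i$ blocks at these delimiters; this is exactly what rules out the spurious decompositions $\alpha 1^{s_1}v_{i,j_1}\cdots v_{i,j_{k-1}}1^{s_k}\beta$ forbidden in the definition of goodness, so $\{w_{i,j}\}$ is good and hence uniquely readable by the lemma preceding Lemma~\ref{lem:6.3}. The number of blocks per level rises from $n$ to at most $2n$ because, to keep the delimiting consistent with the recursive building — a block must be usable both to begin a fresh marked group and to sit internally within a larger block — one keeps for each $v_{i,j}$ two variants, a marked one and its plain counterpart, so that the building of each $w_{i+1,j}$ from $S_i$ is still a legitimate rank-$(\leq 2n)$ building; Proposition~\ref{prop:4.1} (applied with $n$ replaced by $2n$) then packages these levels into an honest rank-$2n$ construction.

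The map $\varphi\colon X_W\to X$ is the sliding block code that detects the $0^k$-delimiters, reads off the local block structure, and outputs the corresponding symbols of $V$; the sets of points with a prescribed expected occurrence are clopen by Lemma~\ref{lem:6.3}, which makes the local rule well defined and $\varphi$ continuous, and surjectivity onto $X_V$ follows since every $x\in X_V$ has a reading (Lemma~\ref{lem:6.1}) whose boundary structure can be decorated to produce a preimage. For the ``moreover'' clause, if $X$ is minimal then by Corollary~\ref{cor:4.3} the proper rank-$n$ construction has bounded spacer parameter, and I would carry out the decoration so that the spacer parameter of $\{w_{i,j}\}$ stays bounded; then $1^{\mathbb Z}\notin X_W$, so $X_W$ is minimal by Theorem~\ref{thm:4.2}(3) (equivalently Corollary~\ref{cor:4.3}). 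The main obstacle is engineering the decoration so that all three demands hold simultaneously: that it yields a \emph{genuinely} good rank-$2n$ construction (the combinatorial no-overlap check driven by $0^k\notin V$), that stripping the decoration is realizable as a length-preserving surjective sliding block code onto exactly $X_V$ rather than some inflated relative of it (so the delimiter must be planted \emph{invertibly}, altering only boundary data that a bounded window can restore), and that the spacer bound is preserved in the minimal case.
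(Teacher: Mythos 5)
Your overall strategy coincides with the paper's: both proofs exploit Corollary~\ref{cor:4.2} to get a forbidden block $0^{k_0}$, introduce marker words containing zero-runs longer than any run in $V$, double each level from $n$ to $2n$ words via marked/plain variants of the same lengths, obtain goodness and hence unique readability, and then strip the marks with a recoding $\varphi$ whose continuity comes from the clopenness of expected-occurrence sets (Lemma~\ref{lem:6.3}), with minimality preserved because the spacer parameter stays bounded (Corollary~\ref{cor:4.3}). However, there is a genuine gap, and you half-concede it yourself in your final sentence: the actual decoration scheme, which is the bulk of the paper's proof, is missing, and the naive version you describe would fail. Planting a single fixed-length delimiter $0^k$ at ``genuine block starts'' can pin down at most one level of the hierarchy: level-$i$ boundaries sit inside level-$(i+1)$ blocks, so a uniform delimiter that marks every level's starts appears too often to delimit anything, while a delimiter marking only base-level starts does not rule out the forbidden overlapped factorizations $\alpha 1^{s_1}v_{i,j_1}\cdots 1^{s_k}\beta$ at \emph{higher} levels $i$ --- two level-$i$ words whose internal sequences of base blocks align could still overlap-match even though the base parse is unambiguous. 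Recall also that goodness by definition includes properness, so every building at level $p+1$ must use all $2n$ level-$p$ words, another constraint your sketch does not address.

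What the paper actually does to close this is level-specific counting, not a delimiter per se: at the base level the marked variants are whole replacement words $010^{h}10$ of exactly the lengths $|v_{i_1,j}|$ (so the recoding is a letter-by-letter swap of entire sub-blocks --- no ``boundary data to restore,'' contrary to your invertibility worry); at each higher level, the identity $q$ of $w_{p+1,q}$ is encoded by \emph{how many} marked sub-blocks ($2m_p+j$ versus $2m_p+j+n$) sit at each end of its building, with total length $\ell>12m_p+4n$. The key combinatorial input is property (*) --- any $2m_p$ consecutive expected sub-blocks exhaust all $n$ types, which follows from the choice $m_p=\lceil a_{i_p+1}/b_{i_p}\rceil$ and properness of the original construction --- and it does double duty: it yields properness of the new construction, and it powers the induction on $p$ (via the ``middle section contains three distinct occurrences of every word'' count) that rules out both the forbidden overlaps and mutual subword containment among same-level words. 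None of this engineering is supplied in your proposal, and it cannot be waved through: it is precisely where the factor of $2$ in the rank, the choice of $i_{p+1}$, and the bound $b_{i_1}>4k_0+4$ earn their keep. The peripheral parts of your sketch (the factor map, surjectivity via readings, and minimality from bounded spacers) are correct and match the paper; a small simplification is that Proposition~\ref{prop:4.1} is not needed at all, since the doubled sequence is already recursive level-by-level and the paper merely re-indexes it.
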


\begin{proof}
Fix a proper rank-$n$ construction of $V$ with associated rank-$n$ generating sequence $\{v_{i,j}\}_{i\geq 0,1\le j\le n}$. Without loss of generality assume $v_{i,1},\dots, v_{i,n}$ are distinct for all $i\geq 1$. By Corollary~\ref{cor:4.2} there is a $k_0\in\mathbb{N}$ such that $0^{k_0}$ is not a subword of $V$; we fix such a $k_0$. For all $i\geq 0$, define $a_i=\max_{1\leq j\leq n}|v_{i,j}|$ and $b_i=\inf_{1\leq j\leq  n}|v_{i,j}|$. Then $b_{i+1}\geq nb_i$ for all $i\geq 0$, hence in particular $b_i\geq n^i$ for all $i\geq 0$.

We define a rank-$2n$ generating sequence $\{w_{p,q}\}_{p\geq 0, 1\leq q\leq 2n}$. Let $w_{0,q}=0$ for  $1\leq q\leq 2n$. To define $w_{1,q}$, let $i_1\geq 0$ be such that $b_{i_1}>4k_0+4$. Then define 
$$ w_{1,q}=\left\{\begin{array}{ll} v_{i_1,q} & \mbox{ if $1\leq q\leq n$,} \\ \\
 010^{|v_{i_1,q-n}|-4}10 &\mbox{ if $n+1\leq q\leq 2n$.}\end{array}
\right.
$$
Note that for all $1\leq j\leq n$, $|w_{1,j}|=|w_{1,j+n}|=|v_{i_1,j}|$.

For $p\geq 1$, suppose $i_p$ has been defined and $w_{p,q}$ have been defined for all $1\leq q\leq 2n$. We define $i_{p+1}$ and $w_{p+1,q}$ as follows. First set 
$$ m_p=\left\lceil\frac{a_{i_p+1}}{b_{i_p}}\right\rceil. $$ 
Then let $i_{p+1}>i_p$ be large enough such that by telescoping using the buildings in the proper rank-$n$ construction $\{v_{i,j}\}_{i\geq 0, 1\leq j\leq n}$, we can write, for all $1\le j\le n$, $v_{i_{p+1},j}$ in the form 
\begin{equation}\label{eqn:1} v_{i_p,j_1}1^{s_1}\cdots 1^{s_\ell}v_{i_p,j_{\ell+1}} \end{equation}
with $\ell>12m_p+4n$. This is doable since $\ell\geq n^{i_{p+1}-i_p}$. Note that for $j=1$ we have $j_1=1$. We also note the following property (*) of the word in (\ref{eqn:1}): 
for any $1\le t\le \ell+2-2m_p$, $\{j_t,\cdots,j_{t+2m_p-1}\}=\{1,\cdots,n\}$.
This is because, 
\begin{equation}\label{eqn:3} u\triangleq v_{i_p, j_t}1^{s_t}\cdots v_{i_p,j_{t+2m_p-1}} \end{equation}
consists of $2m_p$ many consecutive expected occurrences of subwords of $v_{i_{p+1},j}$ of the form $v_{i_p, j'}$; since $v_{i_{p+1}, j}$ is built from $\{v_{i_p+1,1},\dots, v_{i_p+1,n}\}$, by our definition of $m_p$, $u$ must contain some expected occurrence of $v_{i_p+1,j'}$, where $1\leq j'\leq n$, as a subword. Hence (*) holds by the properness of the construction.

We now fix $1\leq j\leq n$ and assume that $v_{i_{p+1},j}$ is in the form (\ref{eqn:1}). For $1\leq t\leq \ell+1$, define
$$ \phi(t)=\left\{\begin{array}{ll} j_t+n & \mbox{ if $2\leq t\leq 2m_p+j+1$ or $\ell-2m_p-j+1\leq t\leq \ell$,} \\ j_t & \mbox{ otherwise}
\end{array}\right. 
$$
and
$$ \psi(t)=\left\{\begin{array}{ll}j_t+n & \mbox{ if $2\leq t\leq 2m_p+j+n+1$ or $\ell-2m_p-j-n+1\leq t\leq \ell$,} \\ j_t & \mbox{ otherwise.}
\end{array}\right.
$$
Then define
$$ w_{p+1,j}= w_{p,\phi(1)}1^{s_1}\cdots 1^{s_\ell}w_{p,\phi(\ell+1)} $$
and
$$ w_{p+1,j+n}=w_{p,\psi(1)}1^{s_1}\cdots 1^{s_\ell}w_{p, \psi(\ell+1)}. $$
This finishes the defintion of $\{w_{p,q}\}_{p\geq 0, 1\leq q\leq 2n}$. 

We verify that the construction defined is proper, i.e., for all $p\geq 1$ and $1\leq q\leq 2n$, all words in $\{w_{p,1},\dots, w_{p,2n}\}$ are used in the building of $w_{p+1,q}$. We first assume $1\leq q\leq n$. Since $\ell>12m_p+4n$, there exists $1\leq t_0\leq \ell+1$ such that for all $t_0\leq t\leq t_0+2m_p-1$, $\phi(t)=j_t$. By property (*), $\{\phi(t_0),\dots, \phi(t_0+2m_p-1)\}=\{j_{t_0},\dots, j_{t+2m_p-1}\}=\{1,\dots, n\}$. Thus all words in $\{w_{p,1}, \dots, w_{p,n}\}$ are used in the building of $w_{p+1,q}$. On the other hand, for $2\leq t\leq 2m_p+1$, $\phi(t)=j_t+n$. By property (*) again, $\{\phi(2),\dots, \phi(2m_p+1)\}=\{j_2+n,\dots, j_{2m_p+1}+n\}=\{n+1,\dots, 2n\}$. Hence all words in $\{w_{p,n+1},\dots, w_{p,2n}\}$ are also used in the building of $w_{p+1,q}$. The case $n+1\leq q\leq 2n$ is similar.

Next we claim that 
\begin{itemize}
\item for all $p\geq 2$ and $1\leq q\leq 2n$, $w_{p,q}$ is not of the form
\begin{equation}\label{eqn:2} \alpha 1^{r_1}w_{p,q_1}1^{r_2}w_{p,q_2}\cdots w_{p,q_{d-1}}1^{r_d}\beta
\end{equation}
where $d\geq 1$, $\alpha$ is a nonempty suffix of some $w_{p,q_{d}}$ and $\beta$ is a nonempty prefix of some $w_{p,q_{d+1}}$, and 
\item for all $p\geq 2$ and $1\leq q, q'\leq 2n$, if $q\neq q'$ then $w_{p,q}$ is not a subword of $w_{p,q'}$. 
\end{itemize}
We prove this claim by induction on $p\geq 2$.

First suppose $p=2$. We observe that $w_{2,q}$ can be written as $u_1yu_2$, where $0^{k_0}$ is not a subword of $y$, $y$ begins and ends with 0, every word of $\{v_{i_1,1},\dots, v_{i_1,n}\}$ occurs at least 3 different times in $y$, and both $u_1$ and $u_2$ are of the form
\begin{equation}\label{eqn:4} \alpha 01^{s_1}010^{h_1}101^{s_2}010^{h_2}10\cdots 010^{h_{2m_p+q}}101^{s_{2m_p+q+1}}0\beta \end{equation}
where $\alpha,\beta$ have lengths at least $3k_0$, $0^{k_0}$ is not a subword of either $\alpha$ or $\beta$, and $h_t>4k_0$ for all $1\leq t\leq 2m_p+q$. The statement about $y$ is based on the observation that, by (\ref{eqn:1}), $y$ can be taken to contain subwords of the form (\ref{eqn:3}) for three different values $2m_p+2n+1< t_1<t_2<t_3< \ell-4m_p-2n$ where $t_3-t_2, t_2-t_1> 2m_p$; by property (*), for each value $t_1, t_2, t_3$, the subword of the form (\ref{eqn:3}) contains a distinct occurrence of each word in $\{v_{i_1,1},\dots, v_{i_1,n}\}$. Note that for $q'\neq q$, $w_{2,q'}$ does not have a subword of the form (\ref{eqn:4}), hence $w_{2,q}$ is not a subword of $w_{2,q'}$. Now suppose $w_{2,q}$ can be written in the form of (\ref{eqn:2}), then by the above observation, there are $1\leq q_1, q_2\leq 2n$, a nonempty suffix $y_1$ of $w_{2,q_1}$, and a nonempty prefix $y_2$ of $w_{2,q_2}$, such that $w_{2,q}=y_11^{s}y_2$ for some nonnegative integer $s$. First suppose $q_1=q_2=q$. Then $w_{2,q}$ must have a subword of the form $z\triangleq 0^{2k_0}101^{s_1}v_{i_1,j_1}1^{s}v_{i_1,j_2}1^{s_2}010^{2k_0}$, and in fact $y$ must be a subword of $z$. However, note that $y$ has at least $3$ different occurrences of each word in $\{v_{i_1,1},\dots, v_{i_1,n}\}$, while $z$ does not have this property, a contradiction. Next suppose $q_1\ne q$. Then $u_1$ is not a subword of $w_{2,q_1}$, so $y_1$ is a prefix of $u_1$. It follows that $yu_2$ is a suffix of $y_2$, $q_2=q$, and $y_2$ must be $w_{2,q}$ itself, contradicting the assumption that $y_1$ is nonempty. The case $q_2\ne q$ is similar. This completes the proof of the claim for $p=2$.

Suppose the claim holds for $p\geq 2$. We verify it for $p+1$. First we observe that for any $1\leq q\leq 2n$, $w_{p+1,q}$ can be written as $u_1yu_2$, where $u_1$ and $u_2$ are of the form
\begin{equation}\label{eqn:5} w_{p,q_1}1^{s_1}w_{p,q_2}1^{s_2}\cdots w_{p, q_{2m_p+q+1}}1^{s_{2m_p+q+1}}w_{p,q_{2m_p+q+2}} \end{equation}
where $1\leq q_1, q_{2m_p+q+2}\leq n$, $n+1\leq q_t\leq 2n$ for all $2\leq t\leq 2m_p+q+1$, and by inductive hypothesis, if $w_{p,\kappa}$ is a subword of $y$, then $1\leq \kappa\leq n$. By the inductive hypothesis, if $q\neq q'$ then $w_{p+1,q'}$ does not contain a subword of the form (\ref{eqn:5}), hence $w_{p+1,q}$ is not a subword of $w_{p+1,q'}$. Next assume $w_{p+1,q}$ can be written in the form (\ref{eqn:2}) with $p+1$ replacing $p$. Then by the above observation, there are $1\leq q_1, q_2\leq 2n$, a nonempty suffix $y_1$ of $w_{p+1,q_1}$, and a nonempty prefix $y_2$ of $w_{p+1, q_2}$ such that $w_{p+1,q}=y_11^sy_2$ for some nonnegative integer $s$. First suppose $q_1=q_2=q$. Then $w_{p+1,q}$ has a subword of the form $z\triangleq w_{p, j_1}1^{s_1}w_{p,j_2}1^{s}w_{p, j_3}1^{s_2}w_{p, j_4}$, where $n+1\leq j_1, j_4\leq 2n$ and $1\leq j_2, j_3\leq n$. In fact, $y$ must be a subword of $z$. However, $y$ contains at least $3$ different occurrences of words in $\{w_{p, 1}, \dots, w_{p,n}\}$, a contradiction. Next suppose $q_1\ne q$. then $u_1$ is not a subword of $w_{p+1,q_1}$, so $y_1$ is a prefix of $u_1$. It follows that $yu_2$ is a suffix of $y_2$, $q_2=q$, and $y_2$ must be $w_{p+1,q}$ itself, contradicting the assumption that $y_1$ is nonempty. The case $q_2\ne q$ is similar. This completes the proof of the claim. 

In view of the claim, if we define $\{w'_{p,q}\}_{p\geq 0, 1\leq q\leq 2n}$ by letting $w'_{0,q}=0$ and $w'_{p,q}=w_{p+1,q}$ for $p\geq 1$ and $1\leq q\leq 2n$, then we obtain a good proper rank-$2n$ construction. Let $W=\lim_p w'_{p,1}$. Note that $W=\lim_p w_{p, 1}$. 

We define a factor map $\varphi: X_W\to X_V$. For $x\in X_W$ and $k\in\mathbb{Z}$, if there is $1\leq j\leq n$ such that the position $k$ is part of an expected occurrence of $w'_{1,j}$ or $w'_{1,j+n}$ which starts at position $k'\le k$, then let $\varphi(x)(k)=v_{i_2,j}(k-k')$; otherwise let $\varphi(x)(k)=1$. By the unique readability, and since for all $1\leq j\leq n$, $|w'_{1,j}|=|w'_{1,j+n}|=|w_{2,j}|=|w_{2,j+n}|=|v_{i_2,j}|$, $\varphi$ is well defined. By Lemma~\ref{lem:6.3} $\varphi$ is continuous. It is clear that $\varphi$ is a factor map. 

Finally, if $X_V$ is minimal, then the construction associated with $\{v_{i,j}\}_{i\geq 0, 1\leq j\leq n}$ must have bounded spacer parameter, because otherwise $1^\mathbb{Z}\in X_V$ and $\{1^\mathbb{Z}\}$ is invariant. Now it follows from our construction that the defined proper rank-$2n$ construction of $W$ also has bounded spacer parameter, and by the implication (2)$\Rightarrow$(1) of Corollary~\ref{cor:4.3} (which does not require the assumption on the symbolic rank of $X_W$), $X_W$ is minimal.
\end{proof}

The following is a corollary to the proof of Proposition~\ref{prop:6.3}.

\begin{proposition}\label{prop:6.4} Let $n\geq 1$ and $X$ be an infinite subshift of symbolic rank $\leq n$. Suppose $X=X_V$ and $V$ has a proper rank-$n$ construction $\{v_{i,j}\}_{i\geq 0, 1\leq j\leq n}$ which has unique readability. Then for any $i\geq 0$, $1\leq j\leq n$, and $k\in\mathbb{Z}$, the set 
$$\{x\in X\,:\,\mbox{ there is an expected occurrence of $v_{i,j}$ in $x$ at position $k$}\}$$ is clopen in $X$.
\end{proposition}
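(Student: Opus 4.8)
The plan is to show that the unique reading of $x$, regarded as a point in a compact symbol space, depends continuously on $x$, so that each individual ``expected occurrence'' coordinate becomes a clopen condition. Fix the given proper rank-$n$ construction with generating sequence $\{v_{i,j}\}_{i\ge 0,1\le j\le n}$. By Lemma~\ref{lem:6.1} every $x\in X$ has a reading, and by the assumed unique readability this reading is unique; denote it $R(x)$ and encode it as the point $R(x)\in Y\defeq\{0,1\}^{\mathbb{N}\times\mathbb{Z}\times\{1,\dots,n\}}$ with $R(x)(i,k,j)=1$ iff $(k,j)\in E_i$ in the reading of $x$. Here $Y$ is compact and metrizable. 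The set in the statement is exactly $\{x\in X:R(x)(i,k,j)=1\}$, so it suffices to prove that $R\colon X\to Y$ is continuous; then each coordinate projection $x\mapsto R(x)(i,k,j)\in\{0,1\}$ is continuous into a discrete space, and its fibers are clopen.

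To prove continuity I would argue by sequential compactness, in the same spirit as the compactness argument used to produce readings in the proof of Lemma~\ref{lem:6.1} and the factor map in the proof of Proposition~\ref{prop:6.3}. Suppose $x_m\to x$ in $X$ but $R(x_m)\not\to R(x)$. Passing to a subsequence and using compactness of $Y$, we may assume $R(x_{m_l})\to e$ for some $e\neq R(x)$. The key claim is that $e$ is itself a reading of $x$; granting this, unique readability forces $e=R(x)$, a contradiction. To establish the claim I would verify that each defining condition (i)--(iv) of a reading passes to the limit. Conditions (i), (ii) and (iii) are ``local'': each asserted occurrence or (in)equality involves only finitely many coordinates of $x$ and finitely many coordinates of the reading, all of which stabilize along $x_{m_l}\to x$ and $R(x_{m_l})\to e$ (convergence in these zero-dimensional spaces being eventual coordinatewise agreement), so the condition is inherited by the pair $(e,x)$.

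The main obstacle, and the only step requiring care, is condition (iv), the ``exactly one parent'' requirement, since it mixes an existence and a uniqueness assertion. The point that makes it finitary is that, for a fixed child $(k,j)$, the candidate parents $(k',j')$ satisfying $k'\le k$ and $k'+|v_{i+1,j'}|\ge k+|v_{i,j}|$ are confined to the finite range $k+|v_{i,j}|-a_{i+1}\le k'\le k$ with $1\le j'\le n$, where $a_{i+1}=\max_{1\le j'\le n}|v_{i+1,j'}|$. Hence, if $e(i,k,j)=1$, then along infinitely many $l$ the reading $R(x_{m_l})$ designates a parent for $(k,j)$ from this finite set; by pigeonhole one candidate $(k^\ast,j^\ast)$ recurs infinitely often, whence $e(i+1,k^\ast,j^\ast)=1$ and the inclusion inequalities persist, giving existence. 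Uniqueness for $e$ follows because two distinct parents for $(k,j)$ in $e$ would both appear in $R(x_{m_l})$ for all large $l$, contradicting that $R(x_{m_l})$ is a genuine reading. This verifies (iv) for $e$ and completes the claim, hence the continuity of $R$ and the clopenness of the set in question.
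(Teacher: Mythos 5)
Your proof is correct, but it takes a genuinely different route from the paper's. The paper derives Proposition~\ref{prop:6.4} as a corollary of the proof of Proposition~\ref{prop:6.3}: it pulls the set $E_{i,j,k}$ back along the factor map $\varphi\colon X_W\to X_V$ coming from the auxiliary good rank-$2n$ word $W$, observes that by unique readability $\varphi^{-1}(E_{i,j,k})$ is exactly the set of $y\in X_W$ with an expected occurrence of $w'_{p,j}$ or $w'_{p,j+n}$ at position $k$ --- clopen because in a \emph{good} construction expected occurrences are recognized by an explicit finite pattern, as in Lemma~\ref{lem:6.3} --- and then descends clopenness through $\varphi$ (a continuous surjection of compact Hausdorff spaces, hence a quotient map, a point the paper leaves implicit), after first reducing to the levels $i=i_p$ of the telescoped construction. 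You instead prove the conceptually cleaner statement that the reading map $R\colon X\to\{0,1\}^{\mathbb{N}\times\mathbb{Z}\times\{1,\dots,n\}}$ is continuous, via compactness of the code space and the dichotomy ``existence of readings (Lemma~\ref{lem:6.1}) plus uniqueness forces every subsequential limit $e$ of $R(x_{m})$ to be a reading of $x=\lim_m x_m$, hence to equal $R(x)$.'' Your verification that the reading conditions pass to the limit is sound: (i)--(iii) are coordinatewise local, and for the delicate condition (iv) you correctly confine the candidate parents of a fixed $(k,j)$ to the finite window $k+|v_{i,j}|-a_{i+1}\le k'\le k$, $1\le j'\le n$, so that pigeonhole yields existence in the limit, while two distinct parents in $e$ would already give two parents in $R(x_{m_l})$ for large $l$, yielding uniqueness. (Sequential arguments suffice since $X\subseteq 2^{\mathbb{Z}}$ is compact metrizable.) As for what each approach buys: the paper's proof is short because the heavy lifting --- the construction of $W$, goodness, and Lemma~\ref{lem:6.3} --- is already in place, and it keeps all pattern-recognition localized in the good cover; your argument is self-contained, never invokes properness or the auxiliary good rank-$2n$ construction, and in fact establishes the more general principle that unique readability alone makes the reading map continuous, which subsumes Lemma~\ref{lem:6.3} as the special case of good constructions.
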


\begin{proof} Let $W$ be the infinite word with a good rank-$2n$ construction with associated rank-$2n$ generating sequence $\{w'_{p,q}\}_{p\geq 0, 1\leq q\leq 2n}$ and let $\varphi: X_W\to X_V$ be the factor map both given in the proof of Proposition~\ref{prop:6.3}. Given $i\geq 0$, $1\leq j\leq n$, and $k\in\mathbb{Z}$, let $E_{i,j,k}$ denote the set in question.
It suffices to show that $E_{i,j,k}$ is clopen for all $i=i_p$ for some $p>1$.

Suppose $i=i_p$ for some $p>1$. Note that a reading of $y\in X_W$ can determine a reading of $\varphi(y)$. Thus $\varphi^{-1}(E_{i,j,k})$ consists exactly of those $y\in X_W$ such that there is an expected occurrence of $w'_{p,j}$ or $w'_{p,j+n}$ in $y$ at position $k$. By our construction, $\varphi^{-1}(E_{i,j,k})$ is easily seen to be clopen. Thus $E_{i,j,k}$ is clopen.
\end{proof}

We are now ready to compute the topological rank of a minimal Cantor system if it has a good construction.

\begin{proposition}\label{prop:6.5} Let $n\geq 1$. Let $X$ be an infinite minimal subshift of symbolic rank $\leq n$. Suppose $X=X_V$ and $V$ has a good rank-$n$ construction. Then $X$ has finite topological rank. 
\end{proposition}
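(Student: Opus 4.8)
The plan is to manufacture, from the given good rank-$n$ construction, a \emph{nested} system of Kakutani--Rohlin partitions in the sense of Theorem~\ref{thm:1} whose number of towers is bounded by a constant depending only on $n$, and then to read the topological rank off the associated ordered Bratteli diagram. First I would record the structural consequences of the hypotheses. Write the good rank-$n$ generating sequence as $\{v_{i,j}\}_{i\geq 0,\,1\leq j\leq n}$ with $V=\lim_i v_{i,1}$ and $X=X_V$. Since $X$ is minimal and infinite, $1^{\mathbb{Z}}\notin X$, so there is an $M$ with every spacer $\leq M$ (otherwise $1^k$ would be a subword of $V$ for arbitrarily large $k$, whence $\{1^{\mathbb{Z}}\}$ would be a proper closed invariant subset); by Corollary~\ref{cor:4.2} there is also a $k_0$ with $0^{k_0}$ not a subword of $V$. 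Goodness gives unique readability, and these two facts guarantee (via the Case~4 analysis in the proof of Lemma~\ref{lem:6.1}) that every $x\in X$ has a two-sided reading whose expected occurrences of level-$i$ words are unbounded in both directions. Hence, for each $i$, the \emph{extended blocks} $v_{i,j}1^{t}$ (with $1\leq j\leq n$, $0\leq t\leq M$) tile $\mathbb{Z}$: each coordinate lies in exactly one expected occurrence of some $v_{i,j}$ together with the maximal run of $1$'s that follows it. Finally, every building stacks at least two words, so $|v_{i+1,j}|\geq 2\min_{j'}|v_{i,j'}|$, and therefore $b_i\defeq \min_{1\leq j\leq n}|v_{i,j}|\to\infty$.

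For each $i$ I would then define a Kakutani--Rohlin partition $\mathcal{P}_i$ whose towers are indexed by triples $(\ell,k,r)$ of extended blocks, where $k$ is the extended block starting at position $0$, $\ell$ is the extended block ending at position $-1$, and $r$ is the extended block starting at the right end of $k$. The base cell $B_i(\ell,k,r)$ is the set of $x\in X$ realizing this configuration at $0$; by Lemma~\ref{lem:6.3}, together with closure of the clopen sets under finite Boolean operations, each $B_i(\ell,k,r)$ is clopen, the tower over it has height $|k|$, and $\sigma^{|k|}B_i(\ell,k,r)\subseteq B(\mathcal{P}_i)$, so $\mathcal{P}_i$ is a genuine Kakutani--Rohlin partition. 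Crucially, the number of towers is at most $\bigl(n(M+1)\bigr)^3$, a constant $C$ independent of $i$. The point of recording the neighbours $\ell$ and $r$ is that every atom $\sigma^{m}B_i(\ell,k,r)$ with $0\leq m<|k|$ then determines $x$ on a window reaching at least $b_i$ to the left (through $\ell$) and at least $b_i$ to the right (through $r$, since $m\leq|k|-1$); hence $\diam(A)\to 0$ uniformly over $A\in\mathcal{P}_i$ as $i\to\infty$.

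It remains to verify that $\{\mathcal{P}_i\}$, prefixed by $\mathcal{P}_0=\{X\}$, is a nested system. Refinement and base nesting follow from the hierarchical decomposition: a level-$(i+1)$ extended block is a concatenation of level-$i$ extended blocks, and at each seam the left and right neighbour data of the level-$i$ blocks is determined, so each level-$(i+1)$ tower subdivides into level-$i$ atoms carrying the correct triples; in particular $B(\mathcal{P}_{i+1})\subseteq B(\mathcal{P}_i)$. That $\bigcap_i B(\mathcal{P}_i)$ is a single point $x^{*}$ and that $\bigcup_i\mathcal{P}_i$ generates the topology both follow from the diameter estimate of the previous paragraph together with $b_i\to\infty$. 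By the construction following Theorem~\ref{thm:1}, this nested system produces an essentially simple (indeed simple, as $X$ is minimal) ordered Bratteli diagram with $|V_i|\leq C$ for all $i$ whose Bratteli--Vershik system is conjugate to $(X,\sigma)$; consequently $\toprank(X,\sigma)\leq C<\infty$.

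The main obstacle is the generation condition, clause~(4) of Theorem~\ref{thm:1}. The naive choice of towers indexed only by the block starting at $0$ fails it: a point may sit at a block boundary at every level (for instance $V$ itself can admit several left extensions inside $X$), so the atoms near the top and bottom of tall towers pin a window on only one side of $0$, and $\bigcap_i B(\mathcal{P}_i)$ may contain more than one point. Recording the preceding and following extended blocks repairs this, forcing every atom to pin a two-sided window of radius $\geq b_i$, at the cost of only multiplying the already bounded tower count by a constant. The one genuinely delicate step is the verification that this enriched partition remains nested, i.e. that the neighbour data stays consistent across the seams of the hierarchical decomposition; this is where goodness (unique readability) and properness are used in an essential way.
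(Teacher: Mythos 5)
Your preliminary reductions (bounded spacers from minimality, two-sided readings via Case~4 of Lemma~\ref{lem:6.1}, clopenness of expected-occurrence sets via Lemma~\ref{lem:6.3}), the tiling by extended blocks, the $(n(M+1))^3$ tower bound, the refinement and base-nesting checks, and the diameter estimate for the individual atoms $\sigma^m B_i(\ell,k,r)$ are all sound. But there is a genuine gap at clause~(3) of Theorem~\ref{thm:1}: decorating the towers with the triples $(\ell,k,r)$ shrinks the \emph{atoms}, not the \emph{base}. The base of your $i$-th partition is still the union $B(\mathcal{P}_i)=\bigcup_{(\ell,k,r)}B_i(\ell,k,r)$, i.e.\ the set of all $x$ in which some level-$i$ extended block starts at position $0$ --- exactly the naive base. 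Its diameter does not tend to $0$, and $\bigcap_i B(\mathcal{P}_i)$ is in general not a singleton: it consists of all points whose reading has a block boundary at $0$ at every level, and neither the coherent spine of words starting at $0$ nor (more importantly) the left tail of such a point is constrained by membership in the bases, so the intersection is typically uncountable. Your sentence deriving clause~(3) ``from the diameter estimate of the previous paragraph'' silently substitutes $\max_{A\in\mathcal{P}_i}\diam(A)$ for $\diam(B(\mathcal{P}_i))$. Without (3) the machinery behind Theorem~\ref{thm:1} does not apply --- the unique minimal path of the resulting ordered diagram, hence its essential simplicity, corresponds precisely to the bases shrinking to a single point --- and the alternative route through Theorem~\ref{thm:3.3}(2) is blocked for the same reason, since that clause demands $\diam(B(\mathcal{P}))<\epsilon$ and not merely small atoms.

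Repairing this is not a routine decoration but the actual crux, and it is what the paper's proof does: it anchors the base at one point. The paper fixes $x_0$ and takes $A_m$ to be the set of $x$ in which an expected occurrence of $v_{3m,1}$ starts at the \emph{fixed} position $-\sum_{0\le i\le m-1}k_{3i}$ and is the \emph{second} expected occurrence of $v_{3m,1}$ inside an expected occurrence of some $v_{3m+3,j}$; the ``second occurrence'' buys a two-sided margin forcing every $x\in A_m$ to agree with $x_0$ on $[-|v_{3m-3,1}|,|v_{3m-3,1}|]$, so $\diam(A_m)\to 0$ and $x_0\in\bigcap_m A_m$. The price of a small base is that return times to it are long and the number of tower heights must be re-bounded: the paper shows the first return from $A_m$ is determined by the data (current parent type $j$, spacer length $s\le \ell_0$, next parent type $j'$), where $1^{\ell_0}$ is not a subword of $V$, yielding at most $n^2\ell_0$ towers, and then concludes via Theorem~\ref{thm:3.3}(2). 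If you shrink your bases to a single decorated cell anchored at a fixed point and redo the tower count, you will essentially be rediscovering this argument; as written, your nested system fails condition~(3) and the conclusion does not follow.
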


\begin{proof}
Fix a good rank-$n$ construction with associated rank-$n$ generating sequence $\{v_{i,j}\}_{i\geq 0,1\le j\le n}$. Let $\ell\in\mathbb{N}$ be such that $1^\ell$ is not a subword of $V$. Note that for each $i\geq 0$, there are at least 4 distinct expected occurrences of $v_{i,1}$ in $v_{i+3,1}$. We let $k_i$ be the starting position of the second expected occurrence of $v_{i,1}$ in $v_{i+3,1}$. Let $x_0$ be the unique element of $2^\mathbb{Z}$ such that for all $i\geq 0$, there exists an occurrence of $v_{3i,1}$ which starts at the position $-\sum_{0\le i'\le i-1} k_{3i'}$. Then every finite subword of $x_0$ is a subword of $v_{3i,1}$ for some $i\geq 0$, and thus $x_0\in X_V$. 

Now for every $m\geq 2$, let $A_m$ be the set of all $x\in X_V$ such that there is an expected occurrence of $v_{3m,1}$ in $x$ starting at the position $-\sum_{0\le i\le m-1}k_{3i}$, which is the second expected occurrence of $v_{3m,1}$ in an expected occurrence of $v_{3m+3,j}$ in $x$ for some $1\leq j\leq n$. By Lemma~\ref{lem:6.3} each $A_m$ is clopen in $X_V$. By definition $x_0\in A_m$.

Now consider the canonical Kakutani-Rohlin partition $\mathcal{P}$ with base $A_m$ defined in the remark after Lemma~\ref{lem:2.1}. The number of towers in $\mathcal{P}$ corresponds to the number of different $h>0$ such that $h$ is the smallest positive integer with $\sigma^h(x)\in A_m$ for some $x\in A_m$. Suppose $x\in A_m$ and let $1\leq j\leq n$ be the integer such that an expected occurrence of $v_{3m+3,j}$ in $x$ contains the position $0$. Suppose $h$ is the smallest positive integer with $\sigma^h(x)\in A_m$. Then there is an expected occurrence of $v_{3m+3,j'}$ in $x$ for some $1\leq j'\leq n$ such that the second expected occurrence of $v_{3m,1}$ in this occurrence of $v_{3m+3, j'}$ starts exactly at $h-\sum_{0\le i\le m-1}k_{3i}$. By the minimality of $h$, we get that the expected occurrence of $v_{3m+3,j}$ and this expected occurrence of $v_{3m+3,j'}$ are only separated by some $1^s$. Conversely, the expected occurrence of some $v_{3m+3,j'}$ immediately to the right of the expected occurrence of $v_{3m+3,j}$ determines the smallest $h$ such that $\sigma^h(x)\in A_m$. Therefore, for $1\le j,j'\le n$ and $0\le s\le\ell$, if we let $B_{j,s,j'}$ be the set of all $x\in X_V$ such that there is an expected occurrence of $v_{3m,1}$ in $x$ starting at the position $-\sum_{0\le i\le m-1}k_{3i}$, which is the second expected occurrence of $v_{3m,1}$ in an expected occurrence of $v_{3m+3,j}$ in $x$, and this expected occurrence of $v_{3m+3,j}$ is followed by $1^s$ and an expected occurrence of $v_{3m+3,j'}$ in $x$, we know that $\{B_{j,s,j'}:1\le j,j'\le n;0\le s\le\ell\}$ is a clopen partition of $A_m$ and this partition refines $\mathcal{P}\upharpoonright A_m$. In summary, we obtain a new Kakutani--Rohlin partition $\mathcal{P}'$ whose base is still $A_m$, and if $B_{j,s,j'}\ne \varnothing$, then $B_{j,s,j'}\in\mathcal{P}'$. $\mathcal{P}'$ has at most $n^2\ell$ towers.

Finally note that the diameter of $A_m$ is at most $2^{-|v_{3m-3,1}|}$ since for any $x\in A_m$, $x\rest [-|v_{3m-3,1}|, |v_{3m-3,1}|]=x_0\rest[-|v_{3m-3,1}|, |v_{3m-3,1}|]$ and is thus completely fixed. Similarly, every clopen set in $\mathcal{P}'$ has a diameter at most $2^{-|v_{3m-3,1}|}$. By Theorem~\ref{thm:3.3} (2), $X_V$ has topological rank at most $n^2\ell$.
\end{proof}

\begin{theorem}\label{thm:6.7} Let $X$ be an infinite minimal subshift of finite symbolic rank. Then $X$ has finite topological rank.
\end{theorem}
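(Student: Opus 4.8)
The plan is to reduce the general minimal subshift of finite symbolic rank to the case of a \emph{good} construction, for which Proposition~\ref{prop:6.5} already computes a finite topological rank, and then to transport the conclusion across a factor map. Let $n=\symbrank(X)$. Since $X$ is an infinite minimal subshift of symbolic rank $n$, Corollary~\ref{cor:4.3} produces a rank-$n$ word $V$ with $X=X_V$ whose rank-$n$ construction is proper (indeed with bounded spacer parameter). A general proper construction need not be good, so Proposition~\ref{prop:6.5} cannot be applied to $V$ directly; this is precisely the gap that Proposition~\ref{prop:6.3} is designed to bridge.

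First I would invoke Proposition~\ref{prop:6.3} to obtain an infinite word $W$ carrying a good rank-$2n$ construction such that $X$ is a topological factor of $X_W$, and---using the second assertion of that proposition together with the minimality of $X$---such that $X_W$ is itself minimal. Since $X$ is infinite and is a factor of $X_W$, the system $X_W$ is infinite as well, and $\symbrank(X_W)\leq 2n$ because $W$ admits a rank-$2n$ construction. Thus $X_W$ is an infinite minimal subshift of symbolic rank $\leq 2n$ presented by a good rank-$2n$ construction, and Proposition~\ref{prop:6.5} (applied with $2n$ in place of $n$) yields that $X_W$ has finite topological rank.

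It remains to pass from $X_W$ to its factor $X$. The cleanest finish is to invoke the known fact that a topological factor of a minimal Cantor system of finite topological rank again has finite topological rank (obtainable, for instance, through the $\mathcal{S}$-adic characterization of \cite{DDMP}); since $X$ is an infinite minimal subshift and hence, by Lemma~\ref{lem:2.4}, not an odometer, this immediately gives $\toprank(X)<\infty$. I expect this transfer step to be the main obstacle, as it is the one ingredient not manufactured by the earlier propositions. A self-contained alternative, avoiding any external factor theorem, is to re-examine the explicit Kakutani--Rohlin partitions built for $X_W$ in the proof of Proposition~\ref{prop:6.5} and push them down along the factor map $\varphi\colon X_W\to X$: because $\varphi$ is a continuous surjection of compact spaces, hence a quotient map, clopenness of a subset of $X$ can be tested on its $\varphi$-preimage, exactly as in the proof of Proposition~\ref{prop:6.4}. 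The delicate point in this alternative is to verify that the tower structure descends---that the images of the towers on $X_W$ remain a bounded number of towers on $X$ and assemble into a genuine Kakutani--Rohlin partition---since $\varphi$ is far from injective; this is where the real work would lie.
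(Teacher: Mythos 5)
Your proposal is correct and follows essentially the same route as the paper: Corollary~\ref{cor:4.3} to get a proper construction, Proposition~\ref{prop:6.3} to pass to a minimal $X_W$ with a good rank-$2n$ construction, Proposition~\ref{prop:6.5} to conclude $X_W$ has finite topological rank, and finally the external fact that factors of minimal Cantor systems of finite topological rank have finite topological rank. The paper discharges that last step by citing Theorem~1.1 of \cite{GH*} rather than the $\mathcal{S}$-adic route you suggest, but this is the same transfer step playing the same role, so your argument matches the paper's proof.
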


\begin{proof} By Corollary~\ref{cor:4.3} $X=X_V$ where $V$ has a proper rank-$n$ construction for some $n\geq 1$. By Proposition~\ref{prop:6.3} there is a word $W$ which has a good rank-$2n$ construction such that $X_V$ is a factor of $X_W$ and $X_W$ is minimal. By Proposition~\ref{prop:6.5}, $X_W$ has finite topological rank. Thus $X_V$ has finite topological rank by the main theorem (Theorem~1.1) of \cite{GH*}.
\end{proof}

In \cite{GH*} the authors showed that if a minimal Cantor system $(Y,S)$ is a factor of a minimal Cantor system $(X, T)$ of finite topological rank, then $\toprank(Y,S)\leq 3\toprank(X,T)$. In \cite{Es} Corollary 4.8 this is improved to $\toprank(Y,S)\leq \toprank(X,T)$. Combining these with our results, we can state the following quantitative result.

\begin{corollary}\label{cor:6.8} Let $X_V$ be an infinite minimal subshift of finite symbolic rank. Then $$\toprank(X_V,\sigma)\leq 4(M+1)(\symbrank(X_V))^2 $$ 
where $M$ is a bound for the spacer parameter of any proper construction of $V$.
\end{corollary}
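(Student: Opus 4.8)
The plan is to assemble the quantitative bound by tracking the constants through the chain of constructions that culminate in Theorem~\ref{thm:6.7}, and then invoking the sharpened factor inequality from \cite{Es}. Write $n=\symbrank(X_V)$ and let $M$ be a bound for the spacer parameter of a proper rank-$n$ construction of $V$ (such a construction exists with bounded spacer parameter by Corollary~\ref{cor:4.3}, since $X_V$ is minimal). First I would apply Proposition~\ref{prop:6.3} to produce a word $W$ with a good rank-$2n$ construction such that $X_V$ is a factor of the minimal subshift $X_W$. The key observation is that the factor map in Theorem~\ref{thm:6.7} is $1$-Lipschitz in the relevant sense, so by Corollary~4.8 of \cite{Es} we have $\toprank(X_V,\sigma)\leq\toprank(X_W,\sigma)$, and it remains to bound $\toprank(X_W,\sigma)$.

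Next I would run the bound from Proposition~\ref{prop:6.5} on $X_W$, which has a good rank-$2n$ construction. That proposition shows the topological rank is at most $(2n)^2\ell$, where $\ell$ is chosen so that $1^\ell$ is not a subword of $W$; equivalently $\ell-1$ bounds the length of any run of $1$s, i.e. $\ell$ can be taken to be one more than the maximal spacer value appearing in the good rank-$2n$ construction of $W$. So the main quantitative task is to control the spacer parameter of the construction of $W$ in terms of $M$. Inspecting the definition of $\{w_{p,q}\}$ in the proof of Proposition~\ref{prop:6.3}, the spacers $1^{s_t}$ appearing in $w_{p+1,q}$ are exactly the spacers $1^{s_t}$ appearing in the telescoped form \eqref{eqn:1} of the words $v_{i_{p+1},j}$ in the original proper rank-$n$ construction of $V$. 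Since telescoping merely composes buildings and introduces no new spacers beyond those already present in the rank-$n$ construction, every spacer in the rank-$2n$ construction of $W$ is bounded by $M$. Hence $1^{M+1}$ is not a subword of $W$ and we may take $\ell=M+1$.

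Combining these, $\toprank(X_W,\sigma)\leq (2n)^2(M+1)=4(M+1)n^2$, and therefore
$$ \toprank(X_V,\sigma)\leq\toprank(X_W,\sigma)\leq 4(M+1)n^2=4(M+1)(\symbrank(X_V))^2, $$
as desired. The step I expect to be the main obstacle is the spacer-tracking in the second paragraph: one must verify carefully that the auxiliary letters $010^{\,\cdot\,}10$ introduced in the rank-$2n$ construction (the $w_{p,q}$ with $n+1\leq q\leq 2n$) do not themselves create long runs of $1$s, and that the substitution $\phi,\psi$ used to define $w_{p+1,j}$ and $w_{p+1,j+n}$ preserves the spacer blocks $1^{s_t}$ verbatim rather than concatenating them. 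Both are true by construction since $\phi$ and $\psi$ only relabel the vertex indices $j_t$ and leave the spacers $s_t$ untouched, but this is exactly the point that needs to be checked to justify $\ell=M+1$.
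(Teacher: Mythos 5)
Your proof is correct and takes essentially the same route as the paper: Corollary~\ref{cor:4.3} to get a proper rank-$n$ construction with spacers bounded by $M$, Proposition~\ref{prop:6.3} to pass to a good rank-$2n$ construction of $W$ whose spacers (as you verify, since telescoping and the relabelings $\phi,\psi$ leave the blocks $1^{s_t}$ untouched and the auxiliary words $010^{\cdot}10$ contain only single $1$s) remain bounded by $M$, so that $\ell=M+1$ works in Proposition~\ref{prop:6.5} and $\toprank(X_W,\sigma)\leq (2n)^2(M+1)$, and finally Corollary~4.8 of \cite{Es} to conclude $\toprank(X_V,\sigma)\leq\toprank(X_W,\sigma)$ --- precisely the chain the paper invokes when it states the corollary. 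One minor remark: your appeal to a ``$1$-Lipschitz'' property of the factor map is superfluous, as Corollary~4.8 of \cite{Es}, as quoted in the paper, requires only that $X_V$ be a minimal Cantor factor of the finite topological rank minimal system $X_W$.
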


\subsection{From finite topological rank to finite symbolic rank\label{subsec:6.2}} 

It was proved in \cite{DM} that every minimal Cantor system of finite topological rank is either an odometer or a subshift on a finite alphabet. We will show that in case it is a subshift it is conjugate to a subshift of finite symbolic rank.

We use the following notation from \cite{GH*} (with a slight modification) in this subsection. Let $B=(V, E, \preceq)$ be an ordered Bratteli diagram. For each $i\geq 1$, let $V_i^*$ denote the set of all words on the alphabet $V_i$, and define a map $\eta_{i+1}: V_{i+1}\to V_i^*$ as follows. For $v\in V_{i+1}$, enumerate all edges $e\in E_{i+1}$ with $\mathsf{r}(e)=v$ in the $\preceq$-increasing order as $e_1,\dots, e_k$, and define
$$ \eta_{i+1}(v)=\mathsf{s}(e_1)\cdots \mathsf{s}(e_k). $$
We also define $\eta_1: V_1\to E_1^*$, where $E_1^*$ is the set of all words on the alphabet $E_1$. For $v\in V_1$, enumerate all $e\in E_1$ with $\mathsf{r}(e)=v$ in the $\preceq$-increasing order as $e_1\dots e_k$, and define
$$ \eta_1(v)=e_1\cdots e_k. $$

\begin{theorem} \label{thm:6.9} Every minimal Cantor system $(X,T)$ of finite topological rank is an odometer or is conjugate to a minimal subshift $X_V$ of finite symbolic rank. Moreover, if $(X,T)$ is not an odometer, then $\symbrank(X_V)\leq \toprank(X,T)$.
\end{theorem}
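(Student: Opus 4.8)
The plan is to exploit the Downarowicz--Maass dichotomy (\cite{DM}): a minimal Cantor system of finite topological rank is either equicontinuous, in which case it is an odometer (the first alternative), or expansive, in which case it is conjugate to a subshift. So assume $(X,T)$ is not an odometer; by \cite{DM} it is expansive. Fix an essentially simple ordered Bratteli diagram $B=(V,E,\preceq)$ with $(X,T)$ conjugate to $(X_B,\lambda_B)$ and, by telescoping, arrange (as in Subsection~\ref{subsec:2.4}) that $|V_n|=d\defeq\toprank(X,T)$ for every $n\geq 1$, that $B$ is simple in the strong sense that every $u\in V_{n-1}$ is joined to every $v\in V_n$ by an edge, and that the representation is \emph{recognizable}, with the $d$ tower words at each level distinct; this last property is available after sufficient telescoping for finite rank diagrams (see \cite{BKMS}, \cite{DDMP}). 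Label the vertices of each $V_n$ as $v_{n,1},\dots,v_{n,d}$ so that $\xmin$ runs through $v_{n,1}$ for all $n$.

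Next I would manufacture the $\{0,1\}$-words directly from the substitutions $\eta_n$. Encode the level-one edges into $v=v_{1,j}\in V_1$ by sending the $\preceq$-least and the $\preceq$-greatest incoming edge to $0$ and every intermediate edge to $1$; writing $h_1(j)$ for the number of such edges this gives $v_{1,j}\defeq 0\,1^{h_1(j)-2}\,0\in\mathcal{F}$ (read as $0$ when $h_1(j)=1$ and as $00$ when $h_1(j)=2$). For $i\geq 1$ set
$$ v_{i+1,j}\defeq v_{i,u_1}v_{i,u_2}\cdots v_{i,u_k},\qquad \eta_{i+1}(v_{i+1,j})=u_1u_2\cdots u_k, $$
that is, substitute each letter $u_t$ of $\eta_{i+1}(v_{i+1,j})$ by the previously built word. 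Because a level-$(i{+}1)$ tower is an ungapped stacking of level-$i$ towers, no spacer is inserted at this stage; since every $v_{i,u_t}$ lies in $\mathcal{F}$, so does their concatenation, and the junctions produce the block $00$, matching the coding. Thus $\{v_{i,j}\}_{i\geq 0,\,1\leq j\leq d}$ is a rank-$d$ generating sequence: it is proper because the strong simplicity makes every letter of $V_i$ occur in each $\eta_{i+1}(v_{i+1,j})$, and it starts with $v_{i,1}$ by our labelling (the minimal edge into $v_{i+1,1}$ emanates from $v_{i,1}$). Its spacer parameter is $0$ at every level $\geq 1$, while the only runs of $1$ occur inside the words $v_{1,j}$ and have length $h_1(j)-2\leq\max_j h_1(j)$; hence the spacer parameter is bounded. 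Setting $V=\lim_i v_{i,1}$, the implication (2)$\Rightarrow$(1) of Corollary~\ref{cor:4.3} gives that $X_V$ is minimal.

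It then remains to identify $X_V$ with $(X,T)$ up to conjugacy. Let $A\subseteq X$ be the clopen set of points sitting at the top or the bottom of their level-one tower, and define $\Phi\colon X\to 2^{\mathbb{Z}}$ by $\Phi(x)(k)=0$ iff $T^kx\in A$. Then $\Phi\circ T=\sigma\circ\Phi$, and by construction every finite subword of $\Phi(x)$ is an expected subword of some $v_{i,1}$, so $\Phi(X)\subseteq X_V$; conversely recurrence of $V$ and minimality force equality. The substance is that $\Phi$ is injective, i.e. that $\{A,X\setminus A\}$ is generating: here I would invoke expansiveness together with recognizability of the telescoped diagram, which is exactly what guarantees that the hierarchical position of $x$ at every level is recoverable from the bi-infinite pattern $\Phi(x)$, and which simultaneously yields the unique readability of $\{v_{i,j}\}$. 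With $\Phi$ a conjugacy, $(X,T)$ is conjugate to $(X_V,\sigma)$; and since the construction uses exactly $d$ columns, $\symbrank(X_V)\leq d=\toprank(X,T)$.

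The step I expect to be the main obstacle is the last one: securing a single clopen coding $A$ that is simultaneously generating (so that $\Phi$ is a genuine conjugacy rather than a proper factor), compatible with the $\mathcal{F}$-normalization of the tower words, and uniquely readable. The first two demands pull against each other — marking only the tops and bottoms of level-one towers keeps the words short and in $\mathcal{F}$, yet may fail to separate towers of equal height — so the argument must lean on telescoping the diagram far enough to make this level-one marking recognizable. It is the recognizability of finite rank Bratteli diagrams that carries the real weight; once it is in hand, the combinatorial bookkeeping of the substitution words $\eta_n$ is routine, and one could alternatively reach the same conclusion by reading off a finite alphabet rank $\le d$ $\mathcal{S}$-adic presentation and quoting \cite{DDMP}.
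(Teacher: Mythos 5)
Your overall architecture (telescope the Bratteli diagram, substitute binary words along the morphisms $\eta_i$, code $(X,T)$ into $2^{\mathbb{Z}}$ by return times to a clopen set, and identify the image with $X_V$ via minimality) is the same as the paper's, but the level-one coding you chose breaks the proof, and you have correctly located where: $v_{1,j}=01^{h_1(j)-2}0$ records \emph{only the height} of the $j$-th tower, so it cannot distinguish distinct towers of equal height, and your set $A$ of tops-and-bottoms is then not generating. Neither of the tools you invoke repairs this. What expansiveness (via \cite{DM}) actually yields, and what the paper extracts as its property (5), is that after telescoping the \emph{labelled} level-one Kakutani--Rohlin partition separates points, i.e.\ the $(2^{W_1})^{\mathbb{Z}}$-valued coding $x\mapsto (\Ret_{A_w}(x))_{w\in W_1}$ by return times to each tower separately is injective; it says nothing about the two-set partition $\{A, X\setminus A\}$. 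Recognizability is likewise about unique desubstitution in a presentation whose alphabet already carries the tower labels; it cannot recover labels your coding has erased. The failure is concrete: for the Thue--Morse diagram (two vertices per level, substitution $1\mapsto 12$, $2\mapsto 21$), both towers have height $2^i$ at every level, and telescoping preserves this equality; your construction gives $v_{i,1}=v_{i,2}$ for all $i$, so $V$ is periodic, $X_V$ is finite, and $\Phi$ is a proper factor onto a finite orbit --- while the Thue--Morse subshift is an infinite minimal system of symbolic rank $2$. So there is no way to "telescope far enough" to make the height-only marking recognizable.

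The paper's fix is exactly at this point: it embeds the tower index $j$ into the binary word itself, setting $v_{1,j}=0(01)^j0^{|\eta_1(w_{1,j})|-2n-4j-2}(10)^{j+n}0$, which has the same length as the tower height but carries distinctive prefix and suffix marker blocks depending on $j$; a combinatorial claim shows no $v_{1,j}$ can be parsed as a suffix-spacers-prefix concatenation of the others, which is what makes the coding decipherable even when heights coincide. The identification of the two systems is then run not through a single $\{0,1\}$-coding but through two embeddings into $(2^{W_1})^{\mathbb{Z}}$: $\theta(x)(k)(w)=\Ret_{A_w}(x)(k)$ on the Bratteli--Vershik side (injective by property (5), which the paper derives from \cite{DM} by converting return times to path-cylinders into return times to whole towers) and a map $\varphi$ on the $X_V$ side defined from occurrences of the $v_{1,j}$; these are matched along $\xmin$ and equality follows by minimality. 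Your closing suggestion --- read off a recognizable finite-alphabet-rank $\mathcal{S}$-adic presentation from \cite{DDMP} (using \cite{BSTY}-type recognizability) and convert it --- is indeed a viable alternative and is carried out in the paper's Subsection on alphabet rank; but note that even that route uses the same marker words $0(01)^j0^{\cdots}(10)^{j+n}0$ at the bottom level, precisely because some injective binary encoding of the alphabet letters is indispensable.
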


\begin{proof}
We just need to show that for every simple ordered Bratteli diagram $B=(W,E,\preceq)$ where $|W_i|\le n$ for all $i\geq 1$, if the Bratteli-Vershik system $(X_B,\lambda_B)$ generated by $B$ is not an odometer, then it is conjugate to $X_V$ for a word $V$ which has a rank-$n$ construction. By telescoping if necessary, we assume without loss of generality that the following properties hold for $B$:
\begin{enumerate}
\item[(1)] for each $i\geq 0$, $w\in W_i$ and $w'\in W_{i+1}$, there is an edge $e\in E_{i+1}$ with
$\mathsf{s}(e)=w$ and $\mathsf{r}(e)=w'$;
\item[(2)] for each $i\geq 1$, $|W_i|\ge 2$;
\item[(3)] for each $i\geq 1$, there are vertices $w^i_{\min}$ and $w^i_{\max}$ in $W_i$ such that for every $w\in W_{i+1}$, $\eta_{i+1}(w)$ starts with $w^i_{min}$ and ends with $w^i_{max}$;
\item[(4)] for each $w\in W_1$, $|\eta_1(w)|\gg n$;
\item[(5)] for any $x,y\in X_B$, if $x\ne y$, then there exists $w\in W_1$ such that $\Ret_{A_w}(x)\ne \Ret_{A_w}(y)$, where $A_w$ denotes the union of the Kakutani--Rohlin tower determined by $w$. 
\end{enumerate}

For (3), we consider the unique $\xmin$ and $\xmax$. Fix an $i\geq 1$. Let $w^i_{\min}\in W_i$ be the vertex in $W_i$ which $\xmin$ passes through and $w^i_{\max}\in W_i$ be the vertex in $W_i$ which $\xmax$ passes through. Then by the uniqueness of $\xmin$, there is an $i_0>i$ such that for all $i'\geq i_0$ and $w\in W_{i'}$ the minimal path between $v_0$ and $w$ passes through $w^i_{\min}$. Similarly, there is an $i_1$ such that for all $i'\geq i_1$ and $w\in W_{i'}$, the maximal path between $v_0$ and $w$ passes through $w^i_{\max}$. Now we get (3) by telescoping. 

For (5) we use the main theorem of \cite{DM}, which guarantees that $(X,T)$ is a subshift on a finite alphabet. In particular there is a finite partition $\mathcal{P}$ of $X$ into clopen sets such that the smallest Boolean algebra containing elements of $\mathcal{P}$ and closed under $T$ and $T^{-1}$ contains all clopen subsets of $X$. Now we also have that $(X, T)$ is conjugate to $(X_B,\lambda_B)$. Thus there is also a finite partition $\mathcal{Q}$ of $X_B$ into clopen sets such that the smallest Boolean algebra containing elements of $\mathcal{Q}$ and closed under $\lambda_B$ and $\lambda_B^{-1}$ contains all clopen subsets of $X_B$. Hence there is $i\geq 1$ such that every element of $\mathcal{Q}$ is the union of the basic open sets given by the paths from $v_0$ to some elements of $W_i$. Let $F$ be the set of all paths from $v_0$ to an element of $W_i$. For each $p\in F$ let $N_p$ denote the basic open set of $X_B$ given by $p$. Then for all $x, y\in X_B$ with $x\neq y$, there is $p\in F$ such that $\Ret_{N_p}(x)\neq \Ret_{N_p}(y)$. Now for any $w\in W_i$, let $A_w$ denote the union of the Kakutani--Rohlin tower determined by $w$, then $A_w$ is the clopen set given by all paths from $v_0$ to $w$. We claim that for all $x, y\in X_B$ with $x\neq y$, there is $w\in W_i$ such that $\Ret_{A_w}(x)\neq \Ret_{A_w}(y)$. For this, note that for any $w\in W_i$, if we enumerate all paths from $v_0$ to $w$ in the $\preceq'$-increasing order as $p_1,\dots, p_k$, then for any $x\in X_B$, $1\leq j\leq k$ and $m\in\mathbb{Z}$, $m\in \Ret_{N_{p_j}}(x)$ iff $m-ak-j+1,\dots, m\in \Ret_{A_w}(x)$ and $m-ak-j\notin \Ret_{A_w}(x)$ for a natural number $a$. Thus if $x\neq y\in X_B$ and $p$ is a path from $v_0$ to $w$ such that $\Ret_{N_p}(x)\neq\Ret_{N_p}(y)$, then $\Ret_{A_w}(x)\neq\Ret_{A_w}(y)$. Now (5) follows by telescoping.
 
For each $i\geq 1$, enumerate the elements of $W_i$ as $w_{i,1},w_{i,2},\cdots,w_{i,n_i}$, where $2\le n_i\le n$, so that $w_{i,1}=w^i_{\min}$. Define
$$ v_{1,j}=0(01)^j0^{|\eta_1(w_{1,j})|-2n-4j-2}(10)^{j+n}0 $$
for $1\le j\le n_1$. For $i\ge 2$, assume $v_{i-1,j}$ have been defined for all $1\le j\le n_{i-1}$. Then we define 
$$ v_{i,j}=v_{i-1,j_1}v_{i-1,j_2}\cdots v_{i-1,j_k}$$ 
if 
$$\eta_i(w_{i,j})=w_{i-1,j_1}w_{i-1,j_2}\cdots w_{i-1,j_k}.$$ 
It is clear that this defines a rank-$n$ construction. Let $V=\lim_i v_{i,1}$.

We note that $V$ has a proper rank-$m$ construction for some $m\leq n$. In fact, let $m\geq 2$ be the smallest such that $n_i=m$ for infinitely many $i\geq  2$. Let $\{i_k\}_{k\geq 0}$ enumerate this infinite set of indices. Then by telescoping with respect to $\{i_k\}_{k\geq 0}$, we obtain a proper rank-$m$ construction for $V$. We note in addition that the rank-$m$ generating sequence associated to this construction is a subsequence of the rank-$n$ generating sequence $\{v_{i,j}\}_{i\geq 0, 1\leq j\leq n_i}$, and therefore has bounded spacer parameter. Thus by Corollary~\ref{cor:4.3} $X_V$ is minimal.

We claim that for any $1\le j\le n_1$, $v_{1,j}$ is not of the form 
$$ \alpha 1^{s_1}v_{1,j_1}1^{s_2}\cdots v_{1,j_{k-1}}1^{s_k}\beta $$
where $k> 0$, $\alpha$ is a nonempty suffix of some $v_{1,j_k}$, and $\beta$ is a nonempty prefix of some $v_{1,j_{k+1}}$. This follows easily from the observation that $v_{1,j}$ has a prefix  of the form $00(10)^j00$ and a suffix of the form $00(01)^{j+n}00$, and for $1\leq j'\leq n_1$ where $j'\neq j$, $v_{1,j'}$ does not contain either of these words as a subword. 

Let $Y=(2^{W_1})^{\mathbb{Z}}$ and view it as a shift over the alphabet $2^{W_1}$. Define $\theta: X_B\to Y$ by 
$$ \theta(x)(k)(w)=\Ret_{A_w}(x)(k). $$
Then $\theta$ is clearly continuous. It is easy to check that $\theta\circ \lambda_B=\sigma\circ \theta$. By (5), $\theta$ is injective. Thus $\theta$ is a conjugacy map between $(X_B, \lambda_B)$ and $\theta(X_B)$, which is a subshift of $Y$.

Finally, we verify that $X_V$ is conjugate to $\theta(X_B)$. For this we define $\varphi: X_V\to (2^{W_1})^{\mathbb{Z}}$ by letting $\varphi(z)(k)(w_{1,j})=1$ iff there is $k'\leq k$ with $k'+|v_{1,j}|-1\geq k$ such that $v_{1,j}$ occurs in $z$ starting at position $k'$. $\varphi$ is well defined because of the above claim. It is clear that $\varphi$ is continuous and injective, and $\varphi\circ \sigma=\sigma\circ \varphi$. Thus $\varphi$ is a conjugacy map between $X_V$ and $\varphi(X_V)$. To complete our proof, it suffices to show $\theta(X_B)=\varphi(X_V)$. Consider a $y\in X_V$ such that $y\rest [0,\infty)=V$. Then by our definitions of $\theta$ and $\varphi$, and particularly because $|v_{i,j}|=|\eta(w_{1,j})|$ for all $1\leq j\leq n_1$, we have $\theta(\xmin)\rest{[0,\infty)}=\varphi(y)\rest [0,\infty)$. By the shift-invariance and the compactness of $\theta(X_B)$ and $\varphi(X_V)$, we get $\theta(X_B)\cap\varphi(X_V)\ne\varnothing$. By the minimality of $\theta(X_V)$ and $\varphi(X_B)$, we have $\theta(X_B)=\varphi(X_V)$ as required.
\end{proof}

The consideration of the shift $Y$ in the above proof is motivated by the work in \cite{Ka} (the construction before Theorem 3.4 in \cite{Ka}).

\subsection{Some examples}

In this subsection we give some examples to demonstrate that the results in the preceding subsections are optimal. We first show that a non-minimal rank-$1$ subshift need not have finite topological rank.

\begin{proposition}\label{prop:6.10} There exists a rank-$1$ word $V$ such that $X_V$ is not minimal and $X_V$ is not of finite topological rank.
\end{proposition}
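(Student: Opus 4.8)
The plan is to exhibit $V$ as an infinite rank-$1$ word whose spacer parameter is \emph{unbounded}, so that $1^k$ occurs as a subword of $V$ for every $k$ and hence $1^{\mathbb Z}\in X_V$. Since every rank-$1$ construction is proper, Corollary~\ref{cor:4.2} then applies and shows that $(X_V,\sigma)$ is an essentially minimal Cantor system whose unique minimal set is $\{1^{\mathbb Z}\}$; equivalently, by Theorem~\ref{thm:4.2}(3), every $x\ne 1^{\mathbb Z}$ has dense orbit while $1^{\mathbb Z}$ is a fixed point. In particular $X_V$ is \emph{not} minimal. This settles the first requirement, and it is exactly here that the spacer parameter is forced to be unbounded.

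To prove that $X_V$ has infinite topological rank I would argue by contradiction through Theorem~\ref{thm:3.3}. Suppose $\toprank(X_V,\sigma)\le n$. By the proof of $(1)\Rightarrow(2)$ the distinguished point of clause (2) may be taken in the unique minimal set, so $x=1^{\mathbb Z}$, and for every $\epsilon>0$ there is a Kakutani--Rohlin partition $\mathcal P$ with at most $n$ towers, all of whose atoms have diameter $<\epsilon$, and whose base $B(\mathcal P)$ is a clopen neighbourhood of $1^{\mathbb Z}$ with $\diam(B(\mathcal P))<\epsilon$. For small $\epsilon$ this forces $B(\mathcal P)\subseteq\{x: x\rest[-m,m]=1^{2m+1}\}$ for large $m$, and forces every atom of $\mathcal P$ to refine the partition of $X_V$ into centered cylinders of radius $m$. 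The structural fact I would then use is clean and, I believe, correct: in any Kakutani--Rohlin partition each tower height equals a first-return time to the base, so that the column over a tower visits the base only at its top; and once the atoms refine the radius-$m$ cylinders, the successive cylinders along a column determine the whole orbit segment between consecutive visits to $B(\mathcal P)$. Hence any two points in a common tower have the same \emph{return word} to $B(\mathcal P)$, and therefore the number of towers is at least the number of distinct return words of $1^{\mathbb Z}$ to $B(\mathcal P)$. It thus suffices to arrange that this number of return words is unbounded as the neighbourhood shrinks.

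The main obstacle, on which the argument entirely turns, is that the naive self-similar choice $v_{n+1}=v_n1^{s_n}v_n$ with $s_n\to\infty$ does \emph{not} work: a direct computation shows that after any sufficiently long block $1^{s_j}$ the word $V$ continues with the fixed prefix $v_N$ (where $s_{N}\ge 2m+1>s_{N-1}$), so the return words to the centered cylinders $[1^{2m+1}]$ are eventually \emph{independent} of the scale, and such an $X_V$ in fact has bounded width. To defeat this collapse I would instead choose the spacer data $\{s_{n,i}\}$ so that the spacer patterns themselves carry unboundedly much information across levels --- that is, I would follow the mechanism by which rank-$1$ subshifts of arbitrarily large topological rank are produced in \cite{AD} (see also Proposition~\ref{prop:6.11}), while additionally inserting, at a sparse set of levels, single spacers $1^{s}$ whose length dwarfs all block lengths used so far. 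The large spacers keep $1^{\mathbb Z}\in X_V$ (non-minimality), while the increasing combinatorial complexity of the spacer patterns is designed so that, for every bound $n$, some small neighbourhood of $1^{\mathbb Z}$ admits more than $n$ distinct return words, contradicting Theorem~\ref{thm:3.3}. The hardest point to make rigorous is precisely this last forcing step: showing that the return-word complexity built into each level survives near the fixed point in the ambient non-minimal system and cannot be coarsened away by a clever choice of base. I expect this to require a careful matching between the Kakutani--Rohlin columns of $X_V$ accumulating at $1^{\mathbb Z}$ and the columns coming from the high-rank spacer patterns, together with control of how the inserted long spacers separate the scales so that distinct levels contribute genuinely distinct return words.
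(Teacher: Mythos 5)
Your overall architecture is the same as the paper's, and the parts you prove are sound: non-minimality from an unbounded spacer parameter (so $1^{\mathbb{Z}}\in X_V$ and $\{1^{\mathbb{Z}}\}$ is the unique minimal set), then a contradiction through Theorem~\ref{thm:3.3}(2) with the distinguished point taken to be $1^{\mathbb{Z}}$, counting towers of a Kakutani--Rohlin partition whose base is a small clopen neighbourhood of the fixed point; your structural observations (tower heights are first-return times to the base, and atoms refining radius-$m$ cylinders force all points of one tower to share a return word) are correct, as is your diagnosis that the naive choice $v_{n+1}=v_n1^{s_n}v_n$ collapses to boundedly many return words. But the proposal stops exactly where the proof begins: you never exhibit $V$, and you explicitly defer the ``forcing step'' --- showing that for every $K$ some neighbourhood of $1^{\mathbb{Z}}$ must support more than $K$ towers. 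That step is essentially the entire content of the paper's argument, and ``follow the mechanism of \cite{AD} as in Proposition~\ref{prop:6.11}, plus insert huge spacers'' is not yet a proof: one must (a) design spacers whose patterns provably force base visits at controlled positions, and (b) rule out that one tower absorbs two different patterns. (Also, the phrase ``return words of $1^{\mathbb{Z}}$'' is a misnomer --- the fixed point lies in a tower of height $1$ --- what must be counted are return words of nearby points of $X_V$; and spacers ``dwarfing all block lengths'' are unnecessary, since mere unboundedness already puts $1^{\mathbb{Z}}$ in $X_V$.)

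For comparison, the paper settles (a) and (b) with two concrete devices absent from your sketch. At level $n$ the interior spacers are all multiples of $3$, with each value $3m$ ($1\le m\le n+1$) occurring as a \emph{doubled} pair $s_{n,i}=s_{n,i+1}=3m$, while the two boundary spacers are $3n+1$ and $3n+2$: these grow (giving $1^{\mathbb{Z}}\in X_V$ for free) and, being $\not\equiv 0 \pmod 3$, yield the key word-combinatorial lemma that $01^{3m}u$ is never a suffix of $v_n$ when $u$ is a nonempty prefix of $v_n$. The doubled pair produces points $x$ containing $v_n1^{3m}v_n1^{3m}v_n$ with all occurrences expected; since $x$ and $\sigma^{|v_n|+3m}(x)$ agree on the window determining atoms, the orbit must hit $B(\mathcal{P})$ at an offset $t_m\in[-|v_n|,3m)$, and the lemma then shows that for $m_1\ne m_2\le\lfloor N/3\rfloor$ the two cases $t_{m_1}<t_{m_2}$ and $t_{m_1}=t_{m_2}$ each produce a prefix-of-$v_n$ versus suffix-of-$v_n$ (or $1^{3m_1}u$ versus $1^{3m_2}$) clash, so the bases $B_{k_{m_1}}$ and $B_{k_{m_2}}$ are distinct and $K\ge\lfloor N/3\rfloor$. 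This is exactly the ``careful matching'' you anticipate would be needed but do not supply; until you give a construction with these two properties and prove the separation claim, the proposal remains a plan rather than a proof.
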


\begin{proof} For any $n\geq 0$, let $r_n\geq 2n+5$ and $s_{n,1}, \dots, s_{n, r_n-1}$ be nonnegative integers satisfying the following:
\begin{enumerate}
\item[(i)] $s_{n,1}=3n+1$, and $s_{n, r_n-1}=3n+2$;
\item[(ii)] for all $1<i<r_n-1$, $s_{n,i}=3m$ for some $0\leq m\leq n+1$;
\item[(iii)] for any $1\le m\le n+1$, there exist $1<i<r_n-1$ such that $s_{n,i}=s_{n,i+1}=3m$.
\end{enumerate}
Then as usual, define $v_0=0$ and $v_{n+1}=v_n1^{s_{n,1}}v_n1^{s_{n,2}}\cdots 1^{s_{n,r_n-1}}v_n$ inductively, and let $V=\lim_n v_{n,1}$. 


We note that for any $n\ge1$, $0\leq m\leq n+1$ and $u$ a nonempty prefix of $v_n$, $01^{3m}u$ is not a suffix of $v_n$. 


Toward a contradiction, assume $X_V$ has topological rank $K\geq 1$. Fix a positive integer $N\geq 1$. Then by Theorem~\ref{thm:3.3} there is a Kakutani--Rohlin partition  $\mathcal{P}$ of $X_V$ with the following properties:
\begin{enumerate}
\item[(a)] $\mathcal{P}$ has $K$ many towers, with bases $B_1,\dots, B_K$; 
\item[(b)] $1^{\mathbb{Z}}\in B(\mathcal{P})=\bigcup_{1\leq k\leq K} B_k$ and $\diam(B(\mathcal{P}))<2^{-N-2}$;
\item[(c)] $\diam(A)<2^{-N-2}$ for all $A\in\mathcal{P}$.
\end{enumerate}
Since every $A\in\mathcal{P}$ is clopen, there exists $N'>N+4$ such that for every $A\in\mathcal{P}$, there exists $U_A\subseteq \{0,1\}^{2N'+1}$ with 
$$A=\{x\in X_V\,:\, x\rest [-N', N']\in U_A\}.$$

Let $n\gg N'+3N$. 

Fix any $1\leq m\leq n+1$. Let $x\in X_V$ be such that $v_n1^{3m}v_n1^{3m}v_n$ occurs in $x$ at position $-|v_n|$, and each of the three demonstrated occurrences of $v_n$ is expected. Then from the definition of $N'$, and because $|v_n|\geq 2^n\gg n\gg N'$, we have that $\sigma^{|v_n|+3m}(x)$ and $x$ belong to the same set in the partition $\mathcal{P}$. Thus there is $0< j\leq |v_n|+3m$ such that $\sigma^j(x)\in B(\mathcal{P})$. Let $t=3m-j$. Then $-|v_n|\leq t< 3m$, $y\triangleq \sigma^{3m-t}(x)\in B_{k}$ for some $1\leq k\leq K$, $v_n1^{3m}v_n1^{3m}v_n$ occurs in $y$ at position $t-|v_n|-3m$, and every $z\in X_V$ with an occurrence of $v_n1^{3m}v_n1^{3m}v_n$ at position $t-|v_n|-3m$ is in $B_{k}$. Let $t_m$ be the least such $t$ and $k_m$ be the corresponding $k$. We note the following two properties of the element $y$. First, there is an occurrence of $v_n1^{3m}v_n$ in $y$ at position $t_m$. Second, because of the minimality of $t_m$, we have that for any $0\leq j\leq t_m+|v_n|$, $\sigma^j(y)\in \sigma^j(B_{k_m})$ and $\sigma^j(B_{k_m})\cap B(\mathcal{P})=\varnothing$ when $j\ne 0$, and so $\sigma^j(B_{k_m})$ is an element of $\mathcal{P}$ (it is one of the sets in the $k_m$-th tower of $\mathcal{P}$).

We claim that for any $1\leq m_1, m_2\leq \lfloor N/3\rfloor$ with $m_1\neq m_2$, we must have $k_{m_1}\neq k_{m_2}$. Toward a contradiction, assume $k\triangleq k_{m_1}=k_{m_2}$. Without loss of generality assume $t_{m_1}\leq t_{m_2}$. 

Consider first the case $t_{m_1}<t_{m_2}$. Then we have a subclaim that $t_{m_2}\geq t_{m_1}+3m_1$. To see this, let $y_1\in B_k$ be an element with an occurrence of $v_n1^{3m_1}v_n$ at position $t_{m_1}$ as above, and similarly $y_2\in B_k$ be an element with an occurrence of $v_n1^{3m_2}v_n$ at $t_2$. Since $\diam(B_k)<2^{-N-2}$, $y_1\rest [-N,N]=y_2\rest[-N,N]$. Also, since for all $0\leq j\leq t_{m_2}+|v_n|$, $\sigma^j(B_k)$ is an element of $\mathcal{P}$, which has diameter $<2^{-N-2}$, we have that $y_1\rest[-N+j, j+N]=y_2\rest[-N+j, j+N]$ for all $0\leq j\leq t_{m_2}+|v_n|$. In particular $y_2(t_{m_2}+|v_n|-1)=0=y_1(t_{m_2}+|v_n|-1)$. Since $t_{m_2}+|v_n|-1\geq t_{m_1}+|v_n|$ and $y_1$ has an occurrence of $1^{3m_1}$ at $t_{m_1}+|v_n|$, we must have $t_{m_2}+|v_n|-1\geq t_{m_1}+|v_n|+3m_1-1$, or $t_{m_2}\geq t_{m_1}+3m_1$ as in the subclaim. Note that our argument above gives that 
$$y_1\rest [t_{m_1}+|v_n|-1, t_{m_2}+|v_n|-1]=y_2\rest [t_{m_1}+|v_n|-1, t_{m_2}+|v_n|-1].$$
Since $t_{m_2}\geq t_{m_1}+3m_1$, the left-hand side is a word of the form $01^{3m_1}u$ where $u$ is a nonempty prefix of $v_n$. But the right-hand side is a suffix of $v_n$. This contradicts our construction of $v_n$.

Thus $t_{m_1}=t_{m_2}$. Denote $t\triangleq t_{m_1}=t_{m_2}$. Without loss of generality assume $m_1<m_2$. By the above argument we again have $y_1\rest [-N+t+|v_n|, t+|v_n|+N]=y_2\rest [-N+t+|v_n|, t+|v_n|+N]$. Since $3m_1<3m_2\leq N$, we have in particular $y_1\rest[t+|v_n|, t+|v_n|+3m_2-1]=y_2\rest[t+|v_n|, t+|v_n|+3m_2-1]$. But the left-hand side is of the form $1^{3m_1}u$ where $u$ is a nonempty prefix of $v_n$, while the right-hand side is $1^{3m_2}$, a contradiction.

This finishes our proof of the claim that whenever $1\leq m_1\neq m_2\leq \lfloor N/3\rfloor$, we have $k_{m_1}\ne k_{m_2}$. It follows from the claim that $K\geq \lfloor N/3\rfloor$. This contradicts the arbitrariness of $N$.
\end{proof}

The next examples show that the topological rank is not bounded by a function of the symbolic rank alone, thus the extra parameter as in Corollary~\ref{cor:6.8} is necessary. The proposition is also a consequence of \cite[Corollary 4.9]{AD}.

\begin{proposition}\label{prop:6.11} For any $N>1$, there is a minimal rank-$1$ subshift whose topological rank is at least $N$.
\end{proposition}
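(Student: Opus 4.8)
The plan is to imitate the construction and the counting argument in the proof of Proposition~\ref{prop:6.10}, but to keep the spacer parameter bounded so that the resulting rank-$1$ subshift is minimal. Fix $N>1$. For every $n\geq 0$ I would choose $r_n=2N+5$ and nonnegative integers $s_{n,1},\dots,s_{n,r_n-1}$ subject to: (i) $s_{n,1}=1$ and $s_{n,r_n-1}=2$; (ii) for all $1<i<r_n-1$ one has $s_{n,i}=3m$ for some $1\leq m\leq N$; and (iii) for every $1\leq m\leq N$ there is some $1<i<r_n-1$ with $s_{n,i}=s_{n,i+1}=3m$. Setting $v_0=0$ and $v_{n+1}=v_n1^{s_{n,1}}v_n1^{s_{n,2}}\cdots 1^{s_{n,r_n-1}}v_n$, let $V=\lim_n v_n$. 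Then $V$ is a rank-$1$ word, and since all spacers are at most $3N$ the (automatically proper) rank-$1$ construction has bounded spacer parameter, so by the implication (2)$\Rightarrow$(1) of Corollary~\ref{cor:4.3} the subshift $X_V$ is minimal; aperiodicity, hence infiniteness, is clear from the varying interior spacer values.

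The key combinatorial input, exactly as in Proposition~\ref{prop:6.10}, is the observation that for every $n\geq 1$, every $1\leq m\leq N$, and every nonempty prefix $u$ of $v_n$, the word $01^{3m}u$ is not a suffix of $v_n$. This is where the choice of the boundary spacers matters: since $s_{n,1}\equiv 1$ and $s_{n,r_n-1}\equiv 2\pmod 3$ while every interior spacer is $\equiv 0\pmod 3$, a maximal run of ones of length $3m$ (a multiple of $3$) can never arise from the last spacer $1^2$ preceding the final copy of $v_{n-1}$. An induction on $n$ along the decomposition $v_n=v_{n-1}1^{s_{n-1,1}}v_{n-1}\cdots 1^{s_{n-1,r_{n-1}-1}}v_{n-1}$ then rules out $01^{3m}u$ as a suffix, the residues modulo $3$ preventing any misalignment at the right-hand boundary. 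This is precisely the analog of the corresponding observation in the proof of Proposition~\ref{prop:6.10}, with the growing boundary spacers $3n+1,3n+2$ there replaced by the constant boundary spacers $1,2$ of the same residues.

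It then remains to show $\toprank(X_V,\sigma)\geq N$, by a verbatim adaptation of the counting in Proposition~\ref{prop:6.10}. Suppose $\toprank(X_V,\sigma)=K$. By Theorem~\ref{thm:3.3}(2) there is a Kakutani--Rohlin partition $\mathcal{P}$ with at most $K$ towers and with $\diam(A)<2^{-3N-2}$ for every $A\in\mathcal{P}$, small enough that each element of $\mathcal{P}$ is determined by the restriction of points to a window $[-N',N']$ with $N'>3N$, so that runs of ones up to length $3N$ are resolved. For each $1\leq m\leq N$, condition (iii) furnishes, for all large $n$, a point $x\in X_V$ in which $v_n1^{3m}v_n1^{3m}v_n$ occurs expectedly at position $-|v_n|$; since $|v_n|\gg 3N$, the local patterns around position $0$ and around position $|v_n|+3m$ coincide on the resolving window, so $x$ and $\sigma^{|v_n|+3m}x$ lie in the same element of $\mathcal{P}$. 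As this element is a single tower level, the orbit cannot climb the tower for $|v_n|+3m$ steps and return to the same level without passing through the base, so there is $0<j\leq |v_n|+3m$ with $\sigma^j x\in B(\mathcal{P})$. Taking the least such witness produces, as in Proposition~\ref{prop:6.10}, a displacement $t_m$ and a tower index $k_m$. The forbidden-suffix property forces $k_{m_1}\neq k_{m_2}$ whenever $1\leq m_1\neq m_2\leq N$: if two distinct spacer lengths shared a tower, comparing the corresponding base points on a common window would exhibit a word $01^{3m_1}u$, with $u$ a nonempty prefix of $v_n$, that is simultaneously a suffix of $v_n$, a contradiction. Hence $\mathcal{P}$ has at least $N$ towers, so $K\geq N$.

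The main obstacle is not the construction but the verification that the indices $k_1,\dots,k_N$ are pairwise distinct; this is the heart of the argument and is carried out exactly as the two-case analysis at the end of the proof of Proposition~\ref{prop:6.10}, the only new point being that the bounded boundary spacers $1,2$ still carry the residues modulo $3$ that make the forbidden-suffix observation go through. Keeping the spacer parameter bounded (so that Corollary~\ref{cor:4.3} yields minimality) while retaining $N$ distinguishable interior spacer lengths is exactly what upgrades the non-minimal Proposition~\ref{prop:6.10} to a minimal rank-$1$ subshift of prescribed topological rank.
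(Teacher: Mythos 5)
Your proposal is correct and takes essentially the same approach as the paper: the paper's own proof of Proposition~\ref{prop:6.11} is likewise a bounded-spacer variant of the construction in Proposition~\ref{prop:6.10} (minimality coming from the bounded spacers via Corollary~\ref{cor:4.3}), followed by the very same forbidden-suffix lemma and Kakutani--Rohlin tower-counting argument via Theorem~\ref{thm:3.3}. The only difference is in how the two boundary spacers are distinguished from the interior ones: the paper does it by magnitude, taking $v_{n+1}=(v_n1)^{q}v_n1^{a_{n,1}}\cdots 1^{a_{n,p}}v_n(1^{N+2}v_n)^{q}$ with interior spacers in $\{2,\dots,N+1\}$ and boundary spacers $1$ and $N+2$, whereas you retain the mod-$3$ device of Proposition~\ref{prop:6.10} with constant boundary spacers $1$ and $2$ and interior spacers in $\{3,6,\dots,3N\}$ --- both choices make the forbidden-suffix property and the distinctness of the tower indices $k_m$ go through.
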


\begin{proof} Fix $p\geq 2N$ and $q\gg N$. Define $v_0=0$ and 
$$ v_{n+1}=(v_n1)^{q}v_n1^{a_{n,1}}v_n1^{a_{n,2}}\cdots 1^{a_{n,p}}v_n(1^{N+2}v_n)^{q}, $$ 
where $a_{n,1}, \dots, a_{n,p}$ are nonnegative integers satisfying the following:
\begin{enumerate}
\item[(i)] for any $1\leq i\leq p$, $2\leq a_{n,i}\leq N+1$;
\item[(ii)] for any $2\leq m\leq N+1$, there is $1\leq i\leq p$ such that $a_{n,i}=a_{n,i+1}=m$.
\end{enumerate}
Let $V=\lim_{n} v_n$. 

By an easy induction we have that for all $n\geq 1$ and $1\leq m\leq N+1$, if $u$ is a nonempty prefix of $v_n$, then $01^{m}u$ is not a suffix of $v_n$.

Consider a Kakutani--Rohlin partition $\mathcal{P}$ of $X_V$ such that 
\begin{enumerate}
\item[(a)] $\mathcal{P}$ has $K$ many towers, with bases $B_1,\cdots,B_K$;
\item[(b)] $\diam(B(\mathcal{P}))<2^{-N-4}$;
\item[(c)] $\diam(A)<2^{-N-4}$ for all $A\in\mathcal{P}$.
\end{enumerate}
Since every $A\in\mathcal{P}$ is clopen, there exists $N'>N+6$ such that for every $A\in\mathcal{P}$, there exists $U_A\subset \{0,1\}^{2N'+1}$ with
$$ A=\{x\in X_V\,:\, x\rest[-N',N']\in U_A\}. $$

Let $n\gg N'+3N$. Similar to the proof of Proposition~\ref{prop:6.10}, we can define, for each $2\leq m\leq N+1$, numbers $t_m$ where $-|v_n|\leq t_m\leq m$, $k_m$ where $1\leq k_m\leq K$, and an element $y\in B_{k_m}$ such that $v_n1^mv_n$ occurs in $y$ at position $t_m$ and for all $0\leq 1\leq t_m+|v_n|$, $\sigma^q(B_{k_m})$ is an element of $\mathcal{P}$.

As in the proof of Proposition~\ref{prop:6.10}, we have that for all $2\leq m_1,m_2\leq N+1$, if $m_1\neq m_2$, then $k_{m_1}\neq k_{m_2}$. This implies that $K\geq N$.
\end{proof}


\subsection{From finite alphabet rank to finite symbolic rank\label{subsec:6.4}}
In this subsection we explore some connections between subshifts of finite symbolic rank and $\mathcal{S}$-adic subshifts of finite alphabet rank considered by various authors, e.g. \cite{BSTY} and \cite{DDMP}.

We first recall the basic definition of $\mathcal{S}$-adic subshifts and related notions following \cite{DDMP}. For a finite alphabet $A$, let $A^*$ be the set of all finite words on $A$. If $A, B$ are finite alphabets, a {\em morphism} $\tau: A^*\to B^*$ is a map satisfying that $\tau(\varnothing)=\varnothing$ and for all $u, v\in A^*$, $\tau(uv)=\tau(u)\tau(v)$. A {\em directive sequence} is a sequence of morphisms $\boldsymbol{\tau}=(\tau_n\,:\, A_{n+1}^*\to A_n^*)_{n\geq 0}$. For $0\leq n<N$, denote by $\tau_{[n,N)}$ the morphism $\tau_n\circ\tau_{n+1}\circ\cdots\circ \tau_{N-1}$. For any $n\geq 0$, define
$$ L^{(n)}(\boldsymbol{\tau})=\{w\in A_n^*\,:\, \mbox{$w$ occurs in $\tau_{[n,N)}(a)$ for some $a\in A_N$ and $N>n$}\} $$
and
$$ X^{(n)}_{\boldsymbol{\tau}}=\{x\in A_n^\mathbb{Z}\,:\, \mbox{every finite subword of $x$ is a subword of some $w\in L^{(n)}(\boldsymbol{\tau})$}\}. $$
$X^{(n)}_{\boldsymbol{\tau}}$ is a subshift on the alphabet $A_n$, and we denote the shift map by $\sigma$. Now let $X_{\boldsymbol{\tau}}=X^{(0)}_{\bf\tau}$. Then $(X_{\boldsymbol{\tau}},\sigma)$ is the {\em $\mathcal{S}$-adic subshift} generated by the directive sequence $\boldsymbol{\tau}$. The {\em alphabet rank} of $\boldsymbol{\tau}$ is defined as
$$ \mbox{AR}(\boldsymbol{\tau})=\liminf_{n\to\infty}|A_n| $$
and the {\em alphabet rank} of a subshift $(X,\sigma)$ as
$$ \mbox{AR}(X)=\inf\{\mbox{AR}(\boldsymbol{\tau})\,:\, X_{\boldsymbol{\tau}}=X\}. $$
As a convention, $\inf\varnothing=+\infty$.

There is a similar notion of {\em telescoping} for directive sequence $\boldsymbol{\tau}$ which does not change the $\mathcal{S}$-adic subshift generated by $\boldsymbol{\tau}$.

An $\mathcal{S}$-adic subshift $X_{\boldsymbol{\tau}}$ is {\em primitive} if for any $n\geq 0$ there exists $N>n$ such that $\tau_{[n,N)}(a)$ contains all letters in $A_n$ for all $a\in A_N$. 

If $\tau:A^*\to B^*$ is a morphism, $x\in B^\mathbb{Z}$, and $Y\subseteq A^\mathbb{Z}$ is a subshift, then a {\em $\tau$-representation} of $x$ in $Y$ is a pair $(k,y)\in \mathbb{Z}\times Y$ such that $x=\sigma^k(\tau(y))$. Moreover, $(k,y)$ is a {\em centered} $\tau$-representation if $0\leq k<|\tau(y(0))|$ in addition. Now $\tau$ is {\em recognizable in $Y$} if each $x\in B^\mathbb{Z}$ has at most one centered $\tau$-representation in $Y$, and a directive sequence $\boldsymbol{\tau}=(\tau_n:A_{n+1}^*\to A_n^*)_{n\geq 0}$ is {\em recognizable} if for each $n\geq 0$, $\tau_n$ is recognizable in $X_{\boldsymbol{\tau}}^{(n+1)}$. An $\mathcal{S}$-adic subshift $X_{\boldsymbol{\tau}}$ is {\em recognizable} if $\boldsymbol{\tau}$ is recognizable.

\begin{theorem}\label{thm:6last} Let $X_{\boldsymbol{\tau}}$ be a primitive, recognizable $\mathcal{S}$-adic subshift of finite alphabet rank $K$. Then $(X_{\boldsymbol{\tau}},\sigma)$ is conjugate to a subshift of finite symbolic rank $\leq K$. Moreover, there exists a proper rank-$K$ construction for a uniquely readable rank-$K$ generating sequence $\{v_{i,j}\}_{i\geq 0, 1\leq j\leq K}$ such that $(X_{\boldsymbol{\tau}},\sigma)$ is conjugate to $(X_V,\sigma)$, where $V=\lim_{i}v_{i,1}$.
\end{theorem}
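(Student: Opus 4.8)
The plan is to mirror the construction in the proof of Theorem~\ref{thm:6.9}, reading the morphisms $\tau_n$ in place of the maps $\eta_n$ coming from a Bratteli diagram, and to let recognizability supply exactly the unique readability and injectivity that simplicity and property~(5) provided there. I may assume $X_{\boldsymbol{\tau}}$ is infinite, the periodic case being trivial.

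First I would carry out a sequence of telescopings of $\boldsymbol{\tau}$, none of which changes $X_{\boldsymbol{\tau}}$ or destroys recognizability. Since $\mathrm{AR}(\boldsymbol{\tau})=\liminf_n|A_n|=K$, I telescope to the levels on which $|A_n|=K$, so every alphabet has exactly $K$ letters. Using primitivity I telescope further so that each $\tau_n$ is positive (every $\tau_n(a)$ contains all letters of $A_n$) and left-proper (there is a letter, indexed $a_{n,1}$, with which every $\tau_n(a)$ begins); left-properness is obtained just as property~(3) in Theorem~\ref{thm:6.9}, by following the letter through which $\xmin$ of the desubstitution tower passes. Primitivity also makes $X_{\boldsymbol{\tau}}$ minimal. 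Write $A_n=\{a_{n,1},\dots,a_{n,K}\}$.

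Next I define the rank-$K$ generating sequence. Put $v_{0,j}=0$. For the first level I imitate the marker words of Theorem~\ref{thm:6.9}, setting $v_{1,j}=0(01)^j0^{L_j}(10)^{j+K}0$ with $L_j$ chosen so that $|v_{1,j}|=|\tau_0(a_{1,j})|$ (possible once $\tau_0$ has long enough images); the same prefix/suffix bookkeeping as in Theorem~\ref{thm:6.9} shows the first level is good. For $i\ge 1$, whenever $\tau_i(a_{i+1,j})=a_{i,j_1}\cdots a_{i,j_k}$ I set $v_{i+1,j}=v_{i,j_1}\cdots v_{i,j_k}$, a concatenation with no interposed spacers; inductively $v_{i,j}$ is the length-$|\tau_{[0,i)}(a_{i,j})|$ binary word coding the level-$i$ block $\tau_{[0,i)}(a_{i,j})$, and $V=\lim_i v_{i,1}$ is well defined since left-properness makes $v_{i+1,1}$ begin with $v_{i,1}$. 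Positivity makes every $v_{i,\cdot}$ occur in each $v_{i+1,j}$, so the construction is proper; the only $1$'s are those inside the level-$1$ markers, so the spacer parameter is bounded, and Corollary~\ref{cor:4.3} gives that $X_V$ is minimal. For unique readability, goodness of the first level gives every $x\in X_V$ a unique parse into expected occurrences of the $v_{1,j}$'s, returning a word $y^{(1)}\in X^{(1)}_{\boldsymbol{\tau}}$ over $A_1$; recognizability then makes the hierarchical desubstitution of $y^{(1)}$ unique, which transports through the length-preserving marker coding to a unique grouping at every level, so $\{v_{i,j}\}$ is uniquely readable. I then build the conjugacy as in Theorem~\ref{thm:6.9}: define $\psi\colon X_{\boldsymbol{\tau}}\to X_V$ by replacing, in the unique centered representation $y=\sigma^{k}\tau_0(y^{(1)})$, each level-$1$ block $\tau_0(a_{1,j})$ by the equal-length marker $v_{1,j}$. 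Equal lengths make $\psi$ commute with $\sigma$; the bounded recognizability window (uniform for a minimal sequence) makes $\psi$ a continuous sliding block code, with continuity of the inverse supplied by Lemma~\ref{lem:6.3} and Proposition~\ref{prop:6.4}; unique readability makes it injective. Both $\psi(X_{\boldsymbol{\tau}})$ and $X_V$ are minimal and share $\psi(y_0)$ for any $y_0$ whose forward desubstitution yields $V$ on $[0,\infty)$, so by minimality and compactness they coincide, exactly as in the last lines of Theorem~\ref{thm:6.9}, giving $(X_{\boldsymbol{\tau}},\sigma)\cong(X_V,\sigma)$ with $\symbrank(X_V)\le K$.

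The main obstacle, and the point that must not be fumbled, is the matching of time scales: one must code whole level-$1$ blocks $\tau_0(a_{1,j})$ (long words over $A_0$ after telescoping) by binary markers of exactly the same length, not single $A_0$-letters. Only this equal-length block-to-marker replacement commutes with the shift and is a genuine topological conjugacy; a naive letter-to-word substitution would realize $X_{\boldsymbol{\tau}}$ merely as the induced system of $X_V$ on the marker-start set, which is not conjugate to $X_V$. The second delicate point is converting recognizability into the uniform, bounded-window form needed for continuity of $\psi$ and for unique readability; here I would invoke the standard uniformity of recognizability for minimal $\mathcal{S}$-adic sequences (\cite{DDMP}) or deduce the requisite clopenness directly from Lemma~\ref{lem:6.3} and Proposition~\ref{prop:6.4}. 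Everything else—properness, bounded spacers, and goodness of the first level—is routine bookkeeping inherited from Theorem~\ref{thm:6.9}.
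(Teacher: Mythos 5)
Your proposal is essentially the paper's own proof: telescope so that every alphabet has size $K$, each $\tau_n$ is positive, and $|\tau_0(a)|\gg K$; encode each $a_{1,j}$ by the marker word $v_{1,j}=0(01)^j0^{\cdots}(10)^{j+K}0$ of length exactly $|\tau_0(a_{1,j})|$; propagate upward by pure concatenation following the $\tau_i$; and conjugate via the length-preserving substitution $\tau_0(a_{1,j})\mapsto v_{1,j}$, with recognizability supplying unique readability. Your insistence on the equal-length block-to-marker coding is precisely the point of the paper's choice of $|v_{1,j}|$, and your elaborations (goodness of the first level, bounded spacers plus Corollary~\ref{cor:4.3} for minimality, clopenness for continuity of the inverse) all match the intended argument.

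The one step that would fail as you state it is the claim that telescoping makes $\boldsymbol{\tau}$ left-proper, i.e.\ that one may assume every $\tau_n(a)$ begins with a common letter $a_{n,1}$, justified ``by following the letter through which $\xmin$ of the desubstitution tower passes.'' That device belongs to the ordered-Bratteli setting of Theorem~\ref{thm:6.9}, where essential simplicity furnishes a unique minimal path; a bare primitive recognizable directive sequence carries no order structure and has no such distinguished path, and left-properness is genuinely not attainable by telescoping: for the Thue--Morse sequence ($0\mapsto 01$, $1\mapsto 10$) every telescoped morphism sends $0$ to a word beginning with $0$ and $1$ to a word beginning with $1$, so no common first letter exists at any level. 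Fortunately your argument uses left-properness only to ensure that $v_{i+1,1}$ begins with $v_{i,1}$, and for that a much weaker chain condition suffices: an enumeration of each $A_i$ with $\tau_i(a_{i+1,1})$ beginning with $a_{i,1}$. Since the sets of first letters of $\tau_{[n,N)}(a)$, $a\in A_N$, are finite and decreasing in $N$, such a chain is extracted by exactly the finite splitting argument of Proposition~\ref{prop:4.1}, and this is what the paper invokes in place of left-properness. With that substitution your proof goes through and coincides with the paper's.
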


\begin{proof} This is similar to the proof of Theorem~\ref{thm:6.9}. By telescoping if necessary, we assume without loss of generality that the following properties hold for $\boldsymbol{\tau}$:
\begin{enumerate}
\item[(1)] for each $i\geq 0$, $a\in A_i$ and $b\in A_{i+1}$, $\tau_i(b)$ contains the letter $a$;
\item[(2)] for each $i\geq 1$, $|A_i|=K$;
\item[(3)] for each $a\in A_1$, $|\tau_0(a)|\gg K$.
\end{enumerate}
Since each $A_i$ is finite, a finite splitting argument similar to the proof of Proposition~\ref{prop:4.1} shows that we can enumerate each $A_i$ as $a_{i,1},\dots, a_{i,n_i}$ such that $n_i=K$ for all $i>1$ and for each $i\geq 0$, $\tau_i(a_{i+1,1})$ starts with $a_{i,1}$. Now, as in the proof of Theorem~\ref{thm:6.9}, define
$$ v_{1,j}=0(01)^j0^{|\tau_0(a_{1,j})|-2n-4j-2}(10)^{j+n}0 $$
for $1\leq j\leq K$. For $i\geq 1$ and $1\leq j\leq K$, if
$$ \tau_{i}(a_{i+1,j})=a_{i,j_1}a_{i,j_2}\cdots a_{i,j_k}, $$
then let
$$ v_{i+1,j}=v_{i,j_1}v_{i,j_2}\cdots v_{i,j_k}. $$
This gives a proper rank-$K$ construction for $V=\lim_iv_{i,1}$.

Clearly the recognizability of $\boldsymbol{\tau}$, together with our definition of $v_{1,j}$, imply the unique readability of $\{v_{i,j}\}_{i\geq 0, 1\leq j\leq K}$. Now $(X_{\boldsymbol{\tau}},\sigma)$ and $(X_V,\sigma)$ are conjugate by the substitution $\tau_0(a_{1,j})\mapsto v_{1,j}$.
\end{proof}

With Theorem~\ref{thm:6last}, our Theorem~\ref{thm:6.9} becomes a consequence of the main theorem of \cite{DDMP} which states that every minimal Cantor system of finite topologival rank  is either an odometer or conjugate to a primitive, recognizable $\mathcal{S}$-adic subshift of finite alphabet rank.

\section{Density and genericity of subshifts of finite symbolic rank\label{sec:new6}}

 It is known that the set of rank-$1$ measure-preserving transformations is a dense $G_\delta$ subset of the Polish space of all measure-preserving transformations (see \cite{Fe}). Here in the topological setting, we show that the situation is different. In fact we consider various different spaces of Cantor systems and subshifts and show that the class of all rank-$1$ subshifts is dense in all but one  of them but generic in none. On the other hand, we note that subshifts of symbolic rank $2$ are generic in the spaces for all transitive and totally transitive subshifts.

We start with the coding space for all minimal Cantor systems.

\begin{proposition}\label{prop:5.4} The set of all minimal Cantor systems conjugate to a rank-$1$ subshift is dense but not generic in the space of all minimal Cantor systems.
\end{proposition}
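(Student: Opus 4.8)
The plan is to establish the two halves separately. Non-genericity will follow quickly from the Baire category theorem together with Proposition~\ref{prop:odometerdense}, whereas density requires an explicit construction modeled on the proof of Proposition~\ref{prop:odometerdense}.

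For density, fix a minimal Cantor system $(X,T)$, identified with a point of $M(\mathcal{C})$, and fix $\epsilon>0$; it suffices to produce a minimal $S$ that is conjugate to an infinite rank-$1$ subshift and satisfies $d(S,T)<\epsilon$. As in Proposition~\ref{prop:odometerdense} I would work with $\tilde T=T^{-1}$ and use Lemma~\ref{lem:2.1} to fix a Kakutani--Rohlin partition $\mathcal{P}$ refining a clopen partition of diameter $<\epsilon$, chosen so that its base $B(\mathcal{P})$ has diameter $<\epsilon$ and so that both $Z$ and $TZ$ have diameter $<\epsilon$ for every $Z\in\mathcal{P}$. Exactly as there, the strong connectedness of the associated adjacency graph organizes $B(\mathcal{P})$ into a single-tower partition $\mathcal{Q}=\{C_1,\dots,C_m\}$ on which the map I build agrees with $\tilde T$; this first tower is the only place where agreement with $T$ is needed, and since every transition outside it, as well as the base itself, is confined to the small-diameter region $B(\mathcal{P})$, it forces $d(S,T)<\epsilon$. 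Instead of then completing the definition to an odometer by spacer-free restacking, I would continue the recursive refinement as a genuine rank-$1$ cutting-and-stacking: at each stage cut the current base into $r_n\ge 2$ clopen pieces, restack the resulting sub-towers, and insert between consecutive copies bounded spacer blocks, realized as clopen ``spacer slabs'' carved from $B(\mathcal{P})$. Keeping the spacer parameter bounded but positive infinitely often yields, in the limit, a rank-$1$ word $V=\lim_n v_n$ whose subshift $X_V$ is infinite (hence, by Lemma~\ref{lem:2.4}, not an odometer) and minimal, the minimality coming from the implication $(2)\Rightarrow(1)$ of Corollary~\ref{cor:4.3} applied with $n=1$ (recall every rank-$1$ construction is automatically proper). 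Taking $A=C_1$ as a generating clopen set, the return-time map $\Ret_A$ conjugates $(X,S)$ to $X_V$; alternatively one verifies clause~(2) of Theorem~\ref{thm:5.3}. Thus $S$ lies in the class under study and is $\epsilon$-close to $T$.

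The hardest step will be the topological realization of the spacers. On the fixed Cantor set $X$ there is no spare mass as in the measure-theoretic cutting-and-stacking, so the spacer slabs must be produced by shrinking the base and reassigning clopen slabs of the previous tower, and this must be arranged so that (i) $S$ remains a homeomorphism of $X$, (ii) the refined partitions still generate the topology and give a nested Kakutani--Rohlin system whose return word to $A$ has rank-$1$ form, and (iii) all spacer slabs lie inside the small region $B(\mathcal{P})$, so that the image under $S$ of any point at the top of the first tower stays in the same $\epsilon$-small base region that also contains its image under $T$; this last point is precisely what keeps the insertions from destroying the estimate $d(S,T)<\epsilon$. I expect the bookkeeping that simultaneously maintains the closeness estimate, the generating property of $(S,A)$, and the rank-$1$ form of $\Ret_A(x_0)$ to be the principal technical burden.

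Non-genericity is then immediate. By Proposition~\ref{prop:odometerdense} the set $O$ of infinite odometers is a dense $G_\delta$ in $M(\mathcal{C})$. If the set $R$ of minimal Cantor systems conjugate to a rank-$1$ subshift were generic, it would contain a dense $G_\delta$ set $D$; since $M(\mathcal{C})$ is Polish and hence Baire, the intersection $D\cap O$ would again be a dense $G_\delta$, in particular nonempty. But every element of $R$ is, up to conjugacy, an infinite subshift, which by Lemma~\ref{lem:2.4} is never conjugate to an infinite odometer, so $R\cap O=\varnothing$ --- a contradiction. Hence $R$ is dense but not generic in $M(\mathcal{C})$.
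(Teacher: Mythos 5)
Your non-genericity argument is correct and is essentially the paper's own: by Proposition~\ref{prop:odometerdense} the infinite odometers form a dense $G_\delta$ in $M(\mathcal{C})$, and by Lemma~\ref{lem:2.4} a minimal Cantor system conjugate to an (automatically infinite) rank-$1$ subshift is never an odometer, so the class in question misses a comeager set and is meager. The density half, however, has a genuine gap: the construction that would prove it is never carried out. You correctly isolate the crux --- realizing spacers topologically on the fixed Cantor set while simultaneously (i) keeping $S$ a homeomorphism agreeing setwise with $T$ on $\mathcal{P}$, (ii) keeping the nested Kakutani--Rohlin system generating with return words of rank-$1$ form, and (iii) confining the spacer slabs --- and then defer exactly this as ``the principal technical burden.'' But this is the entire content of the density claim, and it is harder than your sketch suggests: unlike the odometer construction of Proposition~\ref{prop:odometerdense}, where only the top-of-tower transitions are reassigned, a rank-$1$ structure forces every spacer level introduced at stage $n+1$ to be nested inside the $1$-labelled levels of stage $n$, so the spacer slabs must be budgeted coherently across \emph{all} stages at once, and nothing in the proposal shows this bookkeeping is compatible with maintaining $SZ=TZ$ for all $Z\in\mathcal{P}$. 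There is also a concrete misstep in the coding: with $A=C_1$ (the tower base), the sequence $\Ret_A(x_0)$ is the base-hit itinerary, consisting of isolated $1$'s separated by $0$-runs of varying lengths, which is not the rank-$1$ word $V$ you built and does not visibly satisfy clause (2) of Theorem~\ref{thm:5.3}; the set matching the rank-$1$ structure is the union of the spacer ($1$-labelled) levels, exactly as in the proof of Theorem~\ref{thm:5.3}, where $A=\{x\,:\,x(0)=1\}$.

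The paper avoids all of this by a two-step reduction that you had within reach, since you already invoke Proposition~\ref{prop:odometerdense} for the other half: because the odometers are dense in $M(\mathcal{C})$, it suffices to approximate each infinite odometer by systems conjugate to rank-$1$ subshifts, and for that one only needs, for each $k\geq 2$, a single infinite minimal rank-$1$ subshift carrying a clopen $k$-cyclic partition. The paper exhibits it explicitly via the Chac\'on-like sequence $v_1=0^{2k}1^k0^k$ and $v_{n+1}=v_nv_n1^kv_n$: every $|v_n|$ and every spacer block is divisible by $k$, so positions modulo $k$ are well defined along orbits, yielding a clopen $A\subseteq X_V$ with $\sigma^k(A)=A$ and $\{A,\sigma(A),\dots,\sigma^{k-1}(A)\}$ a partition of $X_V$ (minimality follows from the bounded spacer parameter as in Corollary~\ref{cor:4.3}). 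Conjugating $X_V$ by a homeomorphism of $\mathcal{C}$ that aligns this cyclic partition with a fine cyclic clopen partition of the given odometer makes the two maps carry each small cell onto the same next cell, hence uniformly close. No in-place cutting-and-stacking surgery on $X$, and none of your constraints (i)--(iii), is needed. As written, your density argument is a plan rather than a proof; either supply the full stage-by-stage construction you defer, or replace it by the reduction to odometers just described.
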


\begin{proof} By Proposition~\ref{prop:odometerdense} and Lemma~\ref{lem:2.4}, the set of all subshifts is meager, and not generic, in the space of all minimal Cantor systems. For the density, in view of Proposition~\ref{prop:odometerdense}, it suffices to show that infinite minimal rank-$1$ subshifts can approximate any infinite odometer. To be precise, we need to show that for all $k\geq 2$ there is an infinite rank-$1$ subshift $X_V$ and a clopen subset $A$ of $X_V$ such that $\sigma^k(A)=A$ and $\{A, \sigma(A), \dots, \sigma^{k-1}(A)\}$ form a partition of $X_V$.

Fix $k\geq 2$. We define the following {\em Chac\'{o}n-like} rank-$1$ generating sequence:
$$ \begin{array}{rcl} v_0&=& 0 \\
v_1&=&0^{2k}1^k0^k \\
v_{n+1}&=& v_nv_n1^kv_n \mbox{ for $n\geq 1$.}
\end{array}
$$
Let $V=\lim_n v_n$. Then $X_V$ is infinite. Let $A$ be the set of all $x\in X_V$ such that
$$ x\rest[0,3k-1]\in\{0^{3k}, 0^{2k}1^k, 0^k1^k0^k, 1^k0^k1^k, 1^k0^{2k}\}. $$
Then $A$ is a clopen subset of $X_V$ with the required property.
\end{proof}

To investigate the density and the genericity of subshifts of finite symbolic rank, we consider some spaces of subshifts as defined in \cite{PS}. First, let $\mathcal{S}_2$ be the space of all $\sigma$-invariant closed subsets of $2^\mathbb{Z}$. $\mathcal{S}_2$ is a $G_\delta$ subspace of $K(2^\mathbb{Z})$, and hence is a Polish space. The Hausdorff metric on $\mathcal{S}_2$ is equivalent to the following metric which is easier to work with in our setting. For $X\in \mathcal{S}_2$ and integer $n\geq 0$, let $L_n(X)$ be the set of all finite words of length $n$ which occurs in some element of $X$. Let $L(X)=\bigcup_n L_n(X)$. For $X, Y\in\mathcal{S}_2$, let
$$ d_L(X,Y)=2^{-\inf\{n\,:\, L_n(X)\neq L_n(Y)\}}. $$

However, $\mathcal{S}_2$ is not a perfect space; in particular, the finite subshifts are isolated points in this space. Thus, following \cite{PS} we consider the following perfect subspace, which in particular includes all infinite rank-$1$ subshifts.  Let $\mathcal{S}'_2$ be the subspace of all elements of $\mathcal{S}_2$ which are not isolated (the notation is inspired by the Cantor--Bendixson derivative; see \cite{Ke}). $\mathcal{S}'_2$ is a perfect subspace of $\mathcal{S}_2$, and hence a Polish space. 

Recall that a Cantor system $(X, T)$ is {\em (point) transitive} if there exists $x\in X$ such that the orbit of $x$ is dense in $X$; it is {\em totally (point) transitive} if for all integer $n\geq 1$ there exists $x\in X$ such that $\{T^{nk}x\,:\, k\in\mathbb{Z}\}$ is dense in $X$.

Let $\mathcal{T}'_2$ be the subspace of all transitive subshifts in $\mathcal{S}'_2$. Let $\overline{\mathcal{T}'_2}$ be the closure of $\mathcal{T}'_2$ in $\mathcal{S}'_2$. Moreover, let $\mathcal{TT}'_2$ be the subspace of all totally transitive subshifts in $\mathcal{S}'_2$. Let $\overline{\mathcal{TT}'_2}$ be the closure of $\mathcal{TT}'_2$ in $\mathcal{S}'_2$. Then $\overline{\mathcal{TT}'_2}\subseteq \overline{\mathcal{T}'_2}$ are both closed subspaces of $\mathcal{S}'_2$, hence are Polish spaces, and the metric $d_L$ remains a compatible metric on these subspaces.

The following theorem shows that minimal rank-$1$ subshifts can approximate infinite minimal subshifts of topological rank $2$ in the sense of $d_L$.

\begin{theorem}\label{thm:new7.2}  Let $n\geq 1$ and let $(X, \sigma)$ be an infinite minimal subshift of topological rank $2$. Then there exists an infinite minimal subshift $(Y,\sigma)$ such that $L_n(X)=L_n(Y)$ and $(Y,\sigma)$ is conjugate to $(X_V,\sigma)$ for some infinite rank-1 word $V$. Moreover, if $(X,\sigma)$ is totally transitive, then we can find $(Y,\sigma)$ which is also totally transitive.
\end{theorem}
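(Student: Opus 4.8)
The plan is to reduce the matching of $L_n$ to a finite amount of combinatorial data and then to build $Y$ by hand as a recoded rank-$1$ subshift. First I would invoke the nested Kakutani--Rohlin structure of Theorem~\ref{thm:1}: since $(X,\sigma)$ is an infinite minimal subshift it is not an odometer (Lemma~\ref{lem:2.4}), and as it has topological rank $2$ we may fix, at a deep level $m$, a Kakutani--Rohlin partition with exactly two towers which refines the time-$0$ partition $\{[0],[1]\}$ (where $[b]=\{x\in X:x(0)=b\}$) and whose tower names $W_1,W_2\in\{0,1\}^*$ have length at least $2n$. Every point of $X$ is then a bi-infinite concatenation of $W_1,W_2$; writing $\mathcal A\subseteq\{1,2\}^2$ for the set of ordered pairs $(a,b)$ such that $W_aW_b$ occurs in $X$, and using that each length-$n$ window meets at most one block boundary, one gets $L_n(X)=\bigcup_{(a,b)\in\mathcal A}\{\text{length-}n\text{ subwords of }W_aW_b\}$. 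This reduces the theorem to constructing a subshift $Y$ whose points are bi-infinite $\{W_1,W_2\}$-concatenations realizing exactly the adjacencies in $\mathcal A$ (so that $L_n(Y)=L_n(X)$ automatically), subject to $Y$ being infinite, minimal, conjugate to a rank-$1$ subshift, and totally transitive when $X$ is.

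The key point is that $Y$ cannot in general equal $X$ or even be conjugate to it, since rank-$1$ is a proper subclass up to conjugacy; the freedom I exploit is that only the structure up to length $n$ (the level-$m$ data $W_1,W_2,\mathcal A$) must be preserved, while the combinatorics above level $m$ may be redesigned to be rank-$1$-compatible. Concretely I would build a rank-$1$ word $V$ over $\{0,1\}$ together with a sliding-block conjugacy $\pi\colon X_V\to Y$. The building block of the rank-$1$ construction will be a long valid $X$-word $P$ that is a $\{W_1,W_2\}$-concatenation realizing every adjacency of $\mathcal A$ (hence containing all of $L_n(X)$), decorated with a short recognizable marker in the spirit of the words $0(01)^j0^{\cdots}(10)^{j+n}0$ from the proof of Theorem~\ref{thm:6.9}. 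The recursion concatenates copies of the block with bounded spacers $1^s$, and I would choose the marker together with the finitely many spacer values so that $\pi$ decodes each junction region to exactly one of the transitions recorded in $\mathcal A$, introducing no length-$n$ word outside $L_n(X)$ and realizing all of them. Aperiodicity is injected through a Chac\'{o}n-like pattern in the placement of the nonzero spacers.

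For minimality I would keep the spacer parameter bounded and the construction proper, so that $1^{\mathbb Z}\notin X_V$ and, by the implication (2)$\Rightarrow$(1) of Corollary~\ref{cor:4.3} (which does not require an assumption on the symbolic rank), $X_V$ and hence $Y$ are infinite minimal subshifts. The remaining freedom in the high-level Chac\'{o}n-like combinatorics can be used to make the placement pattern totally transitive, in particular weakly mixing; this property passes to $Y$ and supplies the ``moreover'' clause when $X$ is totally transitive.

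The main obstacle is reconciling the rigidity of a rank-$1$ construction---a single building block and all-$1$ spacers $1^s$, which force repetitive, spacer-laden junctions---with the two-tower, essentially spacer-free junction structure carried by $L_n(X)$, all under a length-preserving code. The marker has to do three seemingly incompatible jobs at once: make block occurrences locally recognizable so that $\pi$ is injective; guarantee that every decoded length-$n$ window lands in $L_n(X)$ while every word of $L_n(X)$ is produced; and keep the spacers bounded so that minimality survives. Pinning down a marker-and-spacer scheme for which the finitely many junction types decode to precisely the adjacency set $\mathcal A$, creating no spurious short words, is where essentially all the work lies; everything else is bookkeeping with the reduction of the first paragraph and the criterion of Corollary~\ref{cor:4.3}.
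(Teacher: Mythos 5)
There is a genuine gap: the one step you yourself flag as ``where essentially all the work lies''---producing a marker-and-spacer scheme for which the sliding-block map $\pi\colon X_V\to Y$ is \emph{injective}, decodes every junction to a transition in $\mathcal{A}$, and creates no word outside $L_n(X)$---is never carried out, and it is exactly the step your route cannot easily survive. Since a conjugacy of subshifts is a length-preserving sliding-block code, any marker symbols and any spacer block $1^s$ must decode, position by position, into legal $X$-words of the same length; with two tower words $W_1,W_2$ of different lengths and up to four transition types in $\mathcal{A}$, the arithmetic of junction lengths versus spacer lengths is a real constraint, not bookkeeping. The paper sidesteps the injectivity demand entirely: it first replaces $X$ by a conjugate $X_W$ coming from a proper rank-$2$ construction with spacer parameter bounded by $0$ and unique readability (via \cite{DDMP} and Theorem~\ref{thm:6last}), builds the Chac\'on-like rank-$1$ word $v_1=w_{i_0,j_0}w_{i_1,1}w_{i_0,j_0}$, $v_{i+1}=v_iv_i1^{|w_{i_0,j_0}|}v_i$, and defines $Y$ as a \emph{factor} of $X_V$ by two non-injective-looking maps: $g$ fills each spacer block with the word $w_{i_0,j_0}$ itself (length preserved by fiat, legality ensured by choosing $j_0$ so that $w_{i_0,j_0}$ concatenates with both blocks inside $L(W)$), and $h$ applies the local rule of the conjugacy $X_W\to X$. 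Conjugacy of $Y$ with $X_V$ is then obtained \emph{for free} from the Gao--Ziegler theorem \cite{GZ20} that every nontrivial factor of a minimal rank-$1$ subshift is conjugate to it---provided $Y$ is infinite, which requires its own argument (the choice of $i_1$ making $\Phi(w_{i_1,1})$ aperiodic below period $|w_{i_0,1}|+|w_{i_0,2}|$, combined with the computation of the maximal equicontinuous factor of rank-$1$ subshifts from \cite{GZ19}). Your proposal addresses aperiodicity of $X_V$ but not of the image $Y$; without the factor-theorem shortcut, nothing in your plan rules out $\pi$ collapsing $Y$ to something small, and nothing proves injectivity.

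The ``moreover'' clause has a second unsupported step. You assert that leftover freedom in the Chac\'on-like pattern makes the construction totally transitive, but with a single building block all return-time lengths are tied to the block and junction lengths, and if these share a common divisor $d>1$ the subshift has a nontrivial cyclic factor no matter how cleverly the spacers are placed. The paper must (and does) \emph{use} total transitivity of $X$ here: it proves that $|w_{i_0,1}|$ and $|w_{i_0,2}|$ are relatively prime (otherwise the expected-occurrence positions modulo $d$ give a clopen $\sigma^d$-invariant proper subset), and then runs the rank-$1$ recursion with spacers of the two coprime lengths $p=|w_{i_0,j_0}|$, $q=|w_{i_0,3-j_0}|$, maintaining $\gcd(|v_i|+p,|v_i|+q)=1$ together with a condition forcing transitivity of every power. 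Your sketch never uses the hypothesis on $X$ at all, which is a sign the argument is incomplete; also, your aside ``totally transitive, in particular weakly mixing'' reverses the implication (weak mixing implies total transitivity, not conversely), and aiming for mixing-type strengthenings is hopeless here since minimal rank-$1$ subshifts are never topologically mixing by \cite{GZ19}. The first-paragraph reduction to tower names $W_1,W_2$ and the adjacency set $\mathcal{A}$ is sound and parallels the paper's use of the spacer-free rank-$2$ presentation, and the appeal to Corollary~\ref{cor:4.3} for minimality is fine; but as submitted the proposal is a plan whose critical steps (the conjugacy, infiniteness of $Y$, and the total-transitivity arithmetic) are missing.
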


\begin{proof} By the main theorem of \cite{DDMP}, $(X,\sigma)$ is conjugate to a primitive, recognizable $\mathcal{S}$-adic subshift of alphabet rank 2. By the proof of Theorem~\ref{thm:6last}, there exists a proper rank-$2$ construction for an infinite word $W$ with the following properties:
\begin{enumerate}
\item[(1)] the associated rank-$2$ generating sequence $\{w_{i,j}\}_{i\geq 0,1\le j\le2}$ has unique readability;
\item[(2)] for all $i\geq 1$ and $j=1,2$, the spacer parameter in the building of $w_{i+1,j}$ from $\{w_{i,1},w_{i,2}\}$ is bounded by 0;
\item[(3)] $(X_W, \sigma)$ is conjugate to $(X,\sigma)$.
\end{enumerate}
Let $f$ be a conjugacy map from $(X_W,\sigma)$ to $(X,\sigma)$. 

Fix $n_1\ge1$ such that for any $x,y\in X_W$ and $k\in\mathbb{Z}$, whenever $x\rest{[k-n_1,k+n_1]}=y\rest{[k-n_1,k+n_1]}$, we have $f(x)(k)=f(y)(k)$. For any $v\in L(X_W)$, if $|v|>2n_1$ and for some $x\in X_W$ and $k\in\mathbb{Z}$, we have $x\rest{[k,k+|v|-1]}=v$, then define $\Phi(v)=f(x)\rest {[k+n_1,k+|v|-n_1-1]}$. Clearly $\Phi(v)$ is well defined and does not depend on the choice of $x$.  

For any finite or infinite word $u$ and $m\leq |u|$, let $L_m(u)$ denote the set of all subwords of $u$ of length $n$. 

Let $i_0\ge1$ be sufficiently large such that for $j=1,2$, $|w_{i_0,j}|>2n+4n_1$ and $L_{n+2n_1}(W)=L_{n+2n_1}(w_{i_0,j})$. Since $X$ is infinite, $W$ is aperiodic, and it follows that there is $j_0\in \{1,2\}$ such that for any $j=1,2$, both $w_{i_0,j_0}w_{i_0,j}\in L(W)$ and $w_{i_0,j}w_{i_0,j_0}\in L(W)$. For the same reason, there exists $i_1>i_0$ such that $\Phi({w_{i_1,1}})$ does not have a period $t$ for any $t\le |w_{i_0,1}|+|w_{i_0,2}|$, i.e., there are $0\leq a< a+kt<|\Phi(w_{i_1,1})|$ such that $\Phi(w_{i_1,1})(a)\neq \Phi(w_{i_1,1})(a+kt)$.

Define a rank-1 generating sequence by letting 
$$v_1=w_{i_0,j_0}w_{i_1,1}w_{i_0,j_0}$$ 
and for any $i\ge1$, 
$$v_{i+1}=v_iv_i1^{|w_{i_0,j_0}|}v_i. $$ 
As usual, let $V=\lim_i v_i$. Then $V$ is a minimal aperiodic infinite rank-1 word.

Define a map $g$ from $X_V$ to $2^\mathbb{Z}$ as follows. For $x\in X_V$, if $k$ is a part of an expected occurrence of $v_1$ in $x$, then set $g(x)(k)=x(k)$; if not, let $k'$ be the starting position of the next expected occurrence of $v_1$ in $x$, and set $g(x)(k)=w_{i_0,j_0}(|w_{i_0,j_0}|+k-k')$. Let $Z=g(X_V)$. Then $(Z,\sigma)$ is a subshift and $g$ is a factor map. By our definition, $L_{n+2n_1}(Z)=L_{n+2n_1}(W)=L_{n+2n_1}(X_W)$. 

For $x\in Z$ and $k\in \mathbb{Z}$, define $h(x)(k)=\Phi({x\rest{[k-n_1,k+n_1]}})$. Let $Y=h(Z)$. Then $(Y,\sigma)$ is a subshift and $h$ is a factor map. By our definition, $L_n(Y)=L_n(X)$. It also follows that there exists $y\in Y$ such that $y$ does not have a period $t$ for any $t\le |w_{i_0,1}|+|w_{i_0,2}|$. 

By Theorem 1.5 of \cite{GZ19}, the maximal equicontinuous factor of $X_V$ is a finite cycle of length $p$, where $p$ is the maximum such that for sufficiently large $i$, $p$ divides both $|v_i|$ and $|v_i|+|w_{i_0,j_0}|$. It follows that $p$ is a factor of $|w_{i_0,j_0}|$. However, since $Y$, a factor of $X_V$, contains an element which does not have a period $t$ for any $t\leq |w_{i_0,j_0}|$, we conclude that $Y$ is an infinite set. By the main theorem of \cite{GZ20}, any nontrivial factor of $X_V$ is conjugate to $X_V$. Thus $Y$ is conjugate to $X_V$. This finishes the proof of the main conclusion of the theorem.

Suppose $(X,\sigma)$ is totally transitive. We define $W$, $\{w_{i,j}\}_{i\geq 0,1\le j\le2}$, $n_1$, $i_0$, and $i_1$ as before. We claim that $|w_{i_0,1}|$ and $|w_{i_0,2}|$ are relatively prime. To see this, let $a=\gcd(|w_{i_0,1}|, |w_{i_0,2}|)$ and assume $a>1$. Then by property (2), the set of all $x\in X_W$ such that there exists an expected occurrence of $w_{i_0,1}$ or $w_{i_0,2}$ starting at  some multiple of $a$ is a clopen, $\sigma^a$-invariant, proper subset of $X$, contradicting the assumption that $(X,\sigma)$ is totally transitive. 

Let $p=|w_{i_0,j_0}|$ and $q=|w_{i_0,3-j_0}|$. Since $p, q$ are relatively prime, we can find a positive integer $m$ such that $|w_{i_0,j_0}w_{i_1,1}|+(m+1)p$ and $p-q$ are relatively prime. We inductively define a rank-1 generating sequence as follows. First let 
$$ v_1=w_{i_0,j_0}w_{i_1,1}(w_{i_0,j_0})^m. $$
For $i\ge1$, if $v_i$ has been defined such that $|v_i|+p$ and $|v_i|+q$ are relatively prime, then let $v_{i+1}$ be defined to satisfy the following properties:
\begin{enumerate}
{\item[(i)] $v_{i+1}$ is built from $v_i$ and the spacer parameters are only selected from $\{p,q\}$;
\item[(ii)] for any $0\le j<i$, there exist $k_1<k_2$ such that $k_2-k_1-j$ is a multiple of $i$, and $k_1,k_2$ are the starting positions of expected occurrences of $v_i$ in $v_{i+1}$;
\item[(iii)] $|v_{i+1}|+p$ and $|v_{i+1}|+q$ are relatively prime.
}
\end{enumerate}
Let $V=\lim_i v_i$. Then $V$ is a minimal aperiodic infinite rank-$1$ word. By (ii), $(X_V,\sigma)$ is totally transitive. The rest of the argument is identical to the above proof.
\end{proof}

\begin{corollary}\label{cor:new7.3} The set of all minimal subshifts conjugate to a rank-$1$ subshift is dense in $\overline{\mathcal{T}'_2}$ and $\overline{\mathcal{TT}'_2}$.
\end{corollary}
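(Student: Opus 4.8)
The plan is to deduce the corollary directly from Theorem~\ref{thm:new7.2} together with the density, supplied by \cite{PS}, of minimal subshifts of topological rank $2$ in the two coding spaces. First I would record the elementary fact about the metric $d_L$: if two subshifts $X,Y\in\mathcal{S}'_2$ satisfy $L_n(X)=L_n(Y)$, then every word of length $m\le n$ occurring in one extends to a length-$n$ word occurring in it, hence is a subword of a length-$n$ word of the other, so $L_m(X)=L_m(Y)$ for all $m\le n$ and therefore $d_L(X,Y)\le 2^{-(n+1)}$. Thus matching languages up to length $n$ makes $d_L$ as small as we please. I would also note that a minimal subshift conjugate to an infinite rank-$1$ subshift is itself an infinite minimal, hence transitive, subshift, and so lies in $\mathcal{T}'_2\subseteq\overline{\mathcal{T}'_2}$ (and, when totally transitive, in $\mathcal{TT}'_2\subseteq\overline{\mathcal{TT}'_2}$); this places the approximating systems inside the relevant closures.

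Next I would invoke the results of \cite{PS}, which guarantee that the minimal subshifts of topological rank $2$ are dense in $\overline{\mathcal{T}'_2}$ and that the totally transitive minimal subshifts of topological rank $2$ are dense in $\overline{\mathcal{TT}'_2}$. Within $\mathcal{S}'_2$ this is exactly the class of ``topological rank $\le 2$'' minimal subshifts, since an infinite subshift is never equicontinuous by Lemma~\ref{lem:2.4} and so cannot have topological rank $1$. Because density is transitive along inclusions $A\subseteq B\subseteq C$ with $A$ dense in $B$ and $B$ dense in $C$, it now suffices to approximate, in $d_L$, each minimal subshift of topological rank $2$ by a minimal subshift conjugate to a rank-$1$ subshift.

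This last step is precisely Theorem~\ref{thm:new7.2}. Given a nonempty open set $U$ in $\overline{\mathcal{T}'_2}$, the density from \cite{PS} produces a minimal subshift $X$ of topological rank $2$ with $X\in U$, together with an $\epsilon>0$ such that the $d_L$-ball of radius $\epsilon$ about $X$ lies in $U$. Choosing $n$ with $2^{-(n+1)}<\epsilon$ and applying Theorem~\ref{thm:new7.2}, I obtain an infinite minimal subshift $Y$, conjugate to some $X_V$ with $V$ an infinite rank-$1$ word, satisfying $L_n(X)=L_n(Y)$; then $d_L(X,Y)\le 2^{-(n+1)}<\epsilon$, so $Y\in U$. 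Hence the minimal subshifts conjugate to a rank-$1$ subshift meet every nonempty open set of $\overline{\mathcal{T}'_2}$. For $\overline{\mathcal{TT}'_2}$ I would run the identical argument starting from a totally transitive $X$ and use the ``moreover'' clause of Theorem~\ref{thm:new7.2} to keep $Y$ totally transitive, so that the approximant lands in $\mathcal{TT}'_2\subseteq\overline{\mathcal{TT}'_2}$.

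The substantive work has already been done in Theorem~\ref{thm:new7.2}, so the only genuine external input is the density of the topological rank $2$ minimal systems from \cite{PS}; the main point to verify carefully is that the approximating systems $Y$ really belong to the ambient spaces, i.e. that an infinite minimal subshift is non-isolated in $\mathcal{S}_2$ and hence lies in $\mathcal{S}'_2$, together with the bookkeeping converting $L_n(X)=L_n(Y)$ into a small value of $d_L$. I do not expect any serious obstacle beyond citing the precise form of the density statement of \cite{PS}.
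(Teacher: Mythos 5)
Your proposal is correct and follows essentially the same route as the paper: the paper cites Theorems 1.3 and 1.4 of \cite{PS} (a generic, hence dense, set of subshifts in $\overline{\mathcal{T}'_2}$ and $\overline{\mathcal{TT}'_2}$ is minimal of topological rank $2$) and then applies Theorem~\ref{thm:new7.2}. You merely make explicit the bookkeeping the paper leaves implicit --- that $L_n(X)=L_n(Y)$ forces $d_L(X,Y)\le 2^{-(n+1)}$, and that the approximants $Y$ (infinite, minimal, hence non-isolated and transitive, totally transitive in the second case) actually lie in the ambient spaces --- all of which is accurate.
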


\begin{proof} By Theorems 1.3 and 1.4 of \cite{PS}, a generic subshift in $\overline{\mathcal{T}'_2}$ or $\overline{\mathcal{TT}'_2}$ is minimal and has topological rank $2$. Thus the conclusion follows from Theorem~\ref{thm:new7.2}.
\end{proof}

\begin{theorem} The set of all minimal subshifts conjugate to a rank-$1$ subshift is not generic in either $\mathcal{S}'_2$, $\overline{\mathcal{T}'_2}$ or $\overline{\mathcal{TT}'_2}$. Moreover, it is not $G_\delta$ in either $\overline{\mathcal{T}'_2}$ or $\overline{\mathcal{TT}'_2}$.
\end{theorem}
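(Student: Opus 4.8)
The plan is to treat the ``not $G_\delta$'' clauses as formal consequences of the ``not generic'' clauses, and to prove non-genericity in each space by exhibiting a comeager set disjoint from the class $R$ of subshifts conjugate to a rank-$1$ subshift. First I would record the Baire-category reduction: each of $\mathcal{S}'_2$, $\overline{\mathcal{T}'_2}$, $\overline{\mathcal{TT}'_2}$ is Polish, hence a Baire space, and in a Baire space a dense $G_\delta$ set is automatically comeager (if $\bigcap_n U_n$ is dense then each open $U_n\supseteq\bigcap_n U_n$ is dense). Since $R$ is dense in $\overline{\mathcal{T}'_2}$ and in $\overline{\mathcal{TT}'_2}$ by Corollary~\ref{cor:new7.3}, were $R$ a $G_\delta$ set in either of these it would be comeager there, i.e. generic; so once non-genericity is proved in these two spaces, the non-$G_\delta$ assertion follows. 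This also explains why the $G_\delta$ statement is claimed only for the two closure spaces: $R$ is not dense in $\mathcal{S}'_2$.

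For $\mathcal{S}'_2$ the argument is purely topological. Every rank-$1$ subshift is minimal, hence transitive, so $R\subseteq \mathcal{T}'_2\subseteq\overline{\mathcal{T}'_2}$. In the framework of \cite{PS} the transitive subshifts are not dense in $\mathcal{S}'_2$, so $\overline{\mathcal{T}'_2}$ is a \emph{proper closed} subset of $\mathcal{S}'_2$, hence not dense; since a comeager subset of a Baire space is dense, $\overline{\mathcal{T}'_2}$, and a fortiori $R$, is not comeager in $\mathcal{S}'_2$, i.e. not generic.

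For the two closure spaces I would separate $R$ from a comeager set by a conjugacy invariant. By Theorems~1.3 and~1.4 of \cite{PS}, a generic element of $\overline{\mathcal{T}'_2}$ (respectively $\overline{\mathcal{TT}'_2}$) is minimal of topological rank $2$; by Theorem~\ref{thm:6.9} it is then conjugate to a subshift of symbolic rank $\le 2$, and the point is to exclude symbolic rank $1$ generically. The natural invariant is the maximal equicontinuous factor: by the analysis of topological eigenvalues of rank-$1$ subshifts (Theorem~1.5 of \cite{GZ19}, as used in the proof of Theorem~\ref{thm:new7.2}), the MEF of a minimal rank-$1$ subshift is a finite or infinite odometer, in particular procyclic and totally disconnected with all continuous eigenvalues roots of unity; hence no minimal rank-$1$ subshift has a nontrivial connected equicontinuous factor, e.g. an irrational rotation. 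I would let $P$ be the conjugacy-invariant class admitting a continuous eigenvalue that is not a root of unity, show $P$ is comeager in each closure space using the realization results of Section~\ref{sec:7} (every irrational rotation occurs as the MEF of a minimal symbolic rank-$2$ subshift) together with Corollary~\ref{cor:new7.3} and the genericity input of \cite{PS}, and conclude $R\cap P=\varnothing$, so $R\subseteq P^{c}$ is meager. As an alternative separating invariant I would keep in reserve the number of ergodic measures: a minimal rank-$1$ subshift has bounded spacer parameter (Corollary~\ref{cor:4.3}), so the word-frequencies along the $v_n$ stabilize and it is uniquely ergodic, whereas a generic topological rank-$2$ system is expected to attain the maximal number $2$ of ergodic measures.

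The main obstacle is precisely this last comeagerness claim: upgrading a rank-$1$-excluding invariant from \emph{dense} to \emph{generic} in $\overline{\mathcal{T}'_2}$ and $\overline{\mathcal{TT}'_2}$. Density of the ``good'' subshifts is already supplied by Section~\ref{sec:7} and Corollary~\ref{cor:new7.3}, but comeagerness requires writing the chosen invariant (an irrational continuous eigenvalue, or a second ergodic measure) as a $G_\delta$ condition and verifying it on a dense $G_\delta$, which is where the fine structural results of \cite{PS} must be invoked; the spectral route additionally relies on the procyclicity of the MEF of every minimal rank-$1$ subshift, while the ergodic-measure route is self-contained on the rank-$1$ side but shifts the burden onto \cite{PS} to produce a second ergodic measure generically. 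In either formulation the real content of the theorem is the collision between the structural restrictions forced by rank $1$ and the generic behavior furnished by \cite{PS}, whereas the non-$G_\delta$ and $\mathcal{S}'_2$ assertions are soft Baire-category consequences.
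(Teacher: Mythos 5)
Your scaffolding agrees with the paper: non-genericity together with density (Corollary~\ref{cor:new7.3}) yields the non-$G_\delta$ clauses, and the $\mathcal{S}'_2$ clause is soft --- though where you assert without citation that transitive subshifts are not dense in $\mathcal{S}'_2$, the paper simply quotes Corollary 4.9 of \cite{PS} (minimal subshifts are nowhere dense in $\mathcal{S}'_2$), which suffices since every element of the class in question is minimal. The genuine gap is in your core argument for the two closure spaces: the separating class $P$ you propose --- subshifts admitting a continuous eigenvalue that is not a root of unity, i.e.\ an irrational rotation factor --- is not comeager in either space; it is in fact disjoint from the generic class. By Theorem 1.3 of \cite{PS}, the generic element of $\overline{\mathcal{T}'_2}$ is a \emph{regular Toeplitz} subshift factoring onto the universal odometer; its maximal equicontinuous factor is an odometer, so all its continuous eigenvalues are roots of unity and $P$ is meager in $\overline{\mathcal{T}'_2}$. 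By Theorem 1.4 of \cite{PS}, the generic element of $\overline{\mathcal{TT}'_2}$ is topologically mixing, hence has trivial maximal equicontinuous factor and no nontrivial eigenvalues, so $P$ is meager there as well. The obstacle you flag --- upgrading the invariant from dense to generic --- is not a technical burden but an impossibility for this choice of invariant: the realization results of Section~\ref{sec:7} produce a dense but meager family.

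The repair, which is the paper's actual proof, is to read the separating invariant directly off the genericity theorems of \cite{PS}, so that nothing needs upgrading. In $\overline{\mathcal{T}'_2}$: generic subshifts factor onto the universal odometer, hence have infinite maximal equicontinuous factor, while by Theorem 1.5 of \cite{GZ19} the maximal equicontinuous factor of a rank-$1$ subshift is \emph{finite} (a stronger fact than the procyclicity you invoke, and the one that does the work here, since ``odometer MEF'' alone would not separate). In $\overline{\mathcal{TT}'_2}$: generic subshifts are topologically mixing, while by Theorem 1.3 of \cite{GZ19} a minimal rank-$1$ subshift is never topologically mixing --- note that the MEF invariant is useless in this space, since mixing systems have trivial, hence finite, MEF. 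Your reserve invariant fails for the same structural reason: regular Toeplitz subshifts are uniquely ergodic, so ``two ergodic measures'' is not generic in $\overline{\mathcal{T}'_2}$; moreover your claim that bounded spacer parameter makes the word frequencies along the $v_n$ stabilize is not an argument (boundedness of spacers does not force convergence of empirical frequencies), so even the rank-$1$ side of that route is unsupported as written.
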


\begin{proof} By Corollary 4.9 of \cite{PS}, the set of all minimal subshifts is nowhere dense in $\mathcal{S}'_2$. 

By Theorem 1.3 of \cite{PS}, a generic subshift in $\overline{\mathcal{T}'_2}$ is a regular Toeplitz subshift which factors onto the universal odometer. In contrast, by Theorem 1.5 of \cite{GZ19}, the maximal equicontinuous factor of a rank-$1$ subshift is finite. Hence the set of all minimal subshifts conjugate to a rank-$1$ subshift is not generic in $\overline{\mathcal{T}'_2}$. Since it is dense in $\overline{\mathcal{T}'_2}$ by Corollary~\ref{cor:new7.3}, it is not a $G_\delta$ in $\overline{\mathcal{T}'_2}$.

By Theorem 1.4 of \cite{PS}, a generic subshift in $\overline{\mathcal{TT}'_2}$ is topologically mixing. In contrast, by Theorem 1.3 of \cite{GZ19}, a minimal rank-$1$ subshift is never topologically mixing. Hence the set of all minimal subshifts conjugate to a rank-$1$ subshift is not generic in $\overline{\mathcal{TT}'_2}$. Since it is dense in $\overline{\mathcal{TT}'_2}$ by Corollary~\ref{cor:new7.3}, it is not a $G_\delta$ in $\overline{\mathcal{TT}'_2}$.
\end{proof}

\begin{theorem} The set of all minimal subshifts conjugate to a subshift of symbolic rank $\leq 2$ is generic in $\overline{\mathcal{T}'_2}$ and $\overline{\mathcal{TT}'_2}$.
\end{theorem}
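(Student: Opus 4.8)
The plan is to combine the genericity results of \cite{PS} with Theorem~\ref{thm:6.9}; the argument is essentially an assembly of these two inputs. First I would invoke Theorems 1.3 and 1.4 of \cite{PS}, exactly as quoted in the proof of Corollary~\ref{cor:new7.3}, which guarantee that a generic subshift in $\overline{\mathcal{T}'_2}$ (respectively in $\overline{\mathcal{TT}'_2}$) is minimal and has topological rank $2$. Phrased in terms of the definition of genericity used in this paper, this says that the class $\mathcal{R}$ of all minimal subshifts of topological rank $2$ contains a dense $G_\delta$ subset of each of these two Polish spaces, and is therefore generic in each.

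The key step is to show that every member of $\mathcal{R}$ is conjugate to a subshift of symbolic rank $\leq 2$; granting this, $\mathcal{R}$ is contained in the target class, so the target class contains the same dense $G_\delta$ that witnesses the genericity of $\mathcal{R}$, and is hence itself generic. To carry this out, fix $(X,\sigma)\in\mathcal{R}$. Since $X$ lies in $\mathcal{S}'_2$, which consists of the non-isolated points of $\mathcal{S}_2$, and since the finite subshifts are isolated in $\mathcal{S}_2$, the subshift $X$ is infinite. By Lemma~\ref{lem:2.4} an infinite subshift is not conjugate to any infinite odometer, so $(X,\sigma)$ is not an odometer. Thus $(X,\sigma)$ is a minimal Cantor system of finite topological rank which is not an odometer, and Theorem~\ref{thm:6.9} applies, producing a minimal subshift $X_V$ of finite symbolic rank, conjugate to $(X,\sigma)$, with $\symbrank(X_V)\leq\toprank(X,\sigma)=2$.

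Putting these together yields the inclusion $\mathcal{R}\subseteq\{\text{minimal subshifts conjugate to a subshift of symbolic rank}\leq 2\}$, and since $\mathcal{R}$ is generic in both $\overline{\mathcal{T}'_2}$ and $\overline{\mathcal{TT}'_2}$, so is the larger class. I do not anticipate a genuine obstacle: the substantive content is carried entirely by Theorems 1.3 and 1.4 of \cite{PS} (which pin the generic topological rank at $2$) and by Theorem~\ref{thm:6.9} (which converts topological rank $\leq 2$ into symbolic rank $\leq 2$). The only point meriting a line of care is the verification that generic members are truly infinite subshifts, hence not odometers, so that one lands in the second alternative of Theorem~\ref{thm:6.9} rather than the first.
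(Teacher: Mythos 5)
Your proposal is correct and follows essentially the same route as the paper, which likewise cites Theorems 1.3 and 1.4 of \cite{PS} for generic minimality and topological rank $2$ in $\overline{\mathcal{T}'_2}$ and $\overline{\mathcal{TT}'_2}$ and then applies Theorem~\ref{thm:6.9}. Your extra check that generic members are infinite and hence not odometers is sound, though it is automatic here since odometers have topological rank $1$ by definition.
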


\begin{proof} By Theorems 1.3 and 1.4 of \cite{PS}, a generic subshift in $\overline{\mathcal{T}'_2}$ or $\overline{\mathcal{TT}'_2}$ is minimal and has topological rank $2$. Thus the conclusion follows from Theorem~\ref{thm:6.9}.
\end{proof}

\section{Factors of subshifts of finite symbolic rank\label{sec:7}}

By results of \cite{GH*}, \cite{Es} and our Corollary~\ref{cor:6.8} and Theorem~\ref{thm:6.9}, a Cantor system that is a factor of a minimal subshift of finite symbolic rank is conjugate to a minimal subshift of finite symbolic rank.

In this final section of the paper we prove some further results about factors of minimal subshifts of finite symbolic rank, and in particular about odometer factors and non-Cantor factors of mininal subshifts of finite symbolic rank. In the following we first show that for any $N\geq 1$, there exist minimal subshifts of finite symbolic rank which are not factors of minimal subshifts of symbolic rank $\leq N$.

\begin{lemma}\label{lem:7.1} For any $N\geq 1$, there exist $m>N$ and a good rank-$m$ construction with associated rank-$m$ generating sequence $\{v_{i,j}\}_{i\geq 0,1\le j\le m}$ such that the following hold for all $i\geq 1$:
\begin{enumerate}
\item[(i)] for any $1\le j_1,j_2\le m$ with $j_1\neq j_2$, $v_{i,j_1}$ is not a subword of $v_{i,j_2}$;
\item[(ii)] for any $1\le j\le m$, there is a unique building of $v_{i+1,j}$ from \newline $\{v_{i,1},\dots, v_{i,m}\}$ whose spacer parameter is bounded by 0;
\item[(iii)] there is a positive integer $\ell\geq 1$ such that, given any two finite sequences $(j_1,j_2,\cdots,j_\ell)$ and $(j'_1,j'_2,\cdots,j'_\ell)$ of elements of $\{1,2,\cdots,m\}$, there is at most one element $w$ of 
$$\{v_{i,j}v_{i,j'}\,:\,1\le j,j'\le m\}\cup\{v_{i,j}\,:\,1\le j\le m\}$$ such that 
$$v_{i,j_1}v_{i,j_2}\cdots v_{i,j_\ell}wv_{i,j'_1}v_{i,j'_2}\cdots v_{i,j'_\ell}$$ is a subword of $V\triangleq \lim_{n}v_{n,1}$;
\item[(iv)] $X_V$ is minimal and $\symbrank(X_V)\geq N$.
\end{enumerate}
\end{lemma}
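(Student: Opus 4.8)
The plan is to generate $\{v_{i,j}\}$ from a single proper, aperiodic substitution on an $m$-letter alphabet, realized by binary blocks equipped with run-length markers, and then to check (i)--(iv) one at a time. First I would fix $m$ large (the precise size being dictated by the rank bound in the last step) and a substitution $\zeta$ on $\{1,\dots,m\}$ that is proper, in the sense that every image $\zeta(j)$ contains all $m$ letters and begins and ends with one reserved marker letter. Putting $v_{0,j}=0$ and letting $v_{i+1,j}$ be the concatenation, with no spacers, of the level-$i$ blocks prescribed by $\zeta(j)$ yields a proper rank-$m$ construction in which every building is a pure concatenation; combined with the unique readability established below this gives (ii) at once. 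The marker letter is realized at each level by a block whose run of $1$'s has length $c_i$ strictly dominating every run of $1$'s inside any other level-$i$ block, with $c_i$ growing in $i$. This makes block boundaries detectable simultaneously at all scales, which is exactly the content of \emph{goodness} (hence of unique readability), and it forces (i): a level-$i$ block of one type cannot sit inside a different equal-scale block, since its marker signature would then have to be reproduced in the interior of the other, against the domination of $c_i$.

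For (iii) I would argue by recognizability, which is where the markers pay off. Fix $i\geq 1$. Since the construction is good, the level-$(i+1)$ blocks are uniquely readable inside $V$, so any window of sufficiently many consecutive level-$i$ blocks determines both the surrounding level-$(i+1)$ block and the exact offset of the window within it; this is the finite-window form of recognizability for the aperiodic primitive substitution $\zeta$ (cf.\ \cite{DDMP}). Choosing $\ell=\ell(i)$ larger than this recognizability window---note that (iii) only asks for some $\ell$ at each fixed $i$, so $\ell$ may depend on $i$---any occurrence in $V$ of a pattern $v_{i,j_1}\cdots v_{i,j_\ell}\,w\,v_{i,j'_1}\cdots v_{i,j'_\ell}$ with $w$ a single block or a pair of blocks has its two flanking windows pin down the left and right level-$(i+1)$ contexts; the one or two blocks filling the gap then occupy determined positions inside determined higher blocks and are forced, so at most one admissible $w$ survives. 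The only delicate point is to let the marker lengths $c_i$ grow fast enough that no short block pattern recurs at two distinct offsets of a higher block, so that the offset, and hence the gap, is genuinely unambiguous.

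The last clause (iv) is where I expect the real work to lie. Minimality is the easy half: properness of $\zeta$ makes the construction primitive, and since the spacer parameter is identically $0$ it is bounded, so $X_V$ is minimal by the implication $(2)\Rightarrow(1)$ of Corollary~\ref{cor:4.3}. The hard half is the lower bound $\symbrank(X_V)\geq N$, where the rigidity in (i) and (iii) must be converted into a genuine lower bound on rank. The route I would take is to bound the topological rank from below: designing $\zeta$ to branch genuinely (so that its subshift carries on the order of $m$ right-special factors at each long length) forces the factor complexity of $X_V$ to grow linearly with slope of order $m$, and by the complexity theory for finite-rank subshifts (\cite{BDM}, \cite{DDMP}) this forces $\toprank(X_V,\sigma)\to\infty$ as $m\to\infty$; the counting is in the spirit of Propositions~\ref{prop:6.10} and~\ref{prop:6.11}, except that the variety now comes from the $m$ mutually non-nested block identities of (i) rather than from spacer lengths. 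Finally, since the construction has spacer parameter $0$, Corollary~\ref{cor:6.8} bounds $\toprank(X_V,\sigma)$ by a fixed multiple of $\symbrank(X_V)^2$, so a topological rank exceeding $4N^2$ forces $\symbrank(X_V)\geq N$; choosing $m$ large enough at the outset then secures (iv). The main obstacle is precisely this conversion---making the lower bound on topological (hence symbolic) rank rigorous in the spacer-free setting, where, unlike in Propositions~\ref{prop:6.10} and~\ref{prop:6.11}, all the distinguishing information is carried by the block alphabet itself.
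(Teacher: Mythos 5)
There is a genuine gap, and it occurs at the two load-bearing points of your plan. First, the marker mechanism is internally inconsistent with the paper's formalism. Every word $v_{i,j}\in\mathcal{F}$ begins and ends with $0$, so if the buildings at levels $\geq 1$ are pure concatenations (spacer parameter $0$, your clause (ii)), the maximal run of $1$'s in a level-$(i+1)$ block equals the maximal run already present at level $1$: run lengths can never grow, so there is no sequence $c_i$ ``growing in $i$.'' If you instead create growing markers by inserting longer and longer spacers, the spacer parameter of the construction is unbounded, hence $1^{\mathbb{Z}}\in X_V$ and $X_V$ is not minimal (Theorem~\ref{thm:4.2}, Corollary~\ref{cor:4.3}) --- contradicting your own appeal to Corollary~\ref{cor:4.3} in (iv). With markers of bounded size, your scale-detection arguments for goodness, for (i), and for the recognizability windows in (iii) collapse, because level-$i$ boundaries cannot be distinguished from the abundant identical low-level markers in block interiors. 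This is exactly the difficulty that Proposition~\ref{prop:6.3} resolves, not with run-length markers but with the alternating $2n$-word scheme in which the words $w_{p,q+n}$ flag block borders combinatorially at each scale; the paper's proof of Lemma~\ref{lem:7.1} simply reads (i)--(iii) off that construction rather than building markers from scratch. (A smaller slip of the same kind: ``spacer parameter identically $0$'' at all levels would force $V=0^\infty$; spacers must appear at level $1$, and the $M$ of Corollary~\ref{cor:6.8} is in effect the maximal run of $1$'s in the language of $X_V$ --- at least your marker length, not $0$ --- though this particular point is repairable by fixing an absolute marker size.)

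Second, the lower bound $\symbrank(X_V)\geq N$ is not established, and your proposed route to it is not the paper's. You reduce it to ``complexity slope of order $m$ at all scales forces $\toprank\to\infty$,'' which you yourself flag as the main obstacle; no such result appears in the paper, and what is needed is delicate because topological and alphabet rank are $\liminf$-type invariants, so the complexity lower bound must hold at \emph{every} scale and simultaneously cohere with all the marker constraints of (i)--(iii). The paper avoids proving any new lower bound: it starts from a minimal Cantor system of finite topological rank $K\geq 8N^2$ (such systems exist by Proposition~\ref{prop:6.11} together with Theorem~\ref{thm:6.7}), realizes it as a subshift $X_W$ of finite symbolic rank with spacer parameter bounded by $1$ via Theorem~\ref{thm:6.9}, and passes to the good rank-$m$ construction of Proposition~\ref{prop:6.3}, obtaining $V$ with $X_W$ a \emph{factor} of $X_V$. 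Then the monotonicity of topological rank under factor maps (\cite{Es}) gives $\toprank(X_V,\sigma)\geq K$, and Corollary~\ref{cor:6.8} with $M=1$ converts $\toprank(X_V,\sigma)\geq 8N^2$ into $\symbrank(X_V)\geq N$. The point you miss is that the high rank is imported into $X_V$ through a factor map from a system already known to have high rank; it is never engineered directly into the substitution. To salvage your approach you would need both a correct scale-indexed border scheme (essentially reproving Proposition~\ref{prop:6.3}) and a rigorous complexity-to-rank lower bound (essentially importing machinery from \cite{DDMP}), so as written the proposal does not close.
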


\begin{proof}
Let $(X,T)$ be a minimal Cantor system whose topological rank is $K<\infty$ where $K\geq 8N^2$. By Theorem~\ref{thm:6.9} there exist $k\leq K$ and a proper rank-$k$ construction of an infinite word $W$ such that $(X_W,\sigma)$ is conjugate to $(X,T)$. It also follows from the proof of Theorem~\ref{thm:6.9} that the spacer parameter of $W$ is bounded by $1$. Let $m=2k$. By the proof of Proposition~\ref{prop:6.3}, there exists an infinite word $V$ with a good rank-$m$ construction such that $X_W$ is a factor of $X_V$. Moreover, the spacer parameter of $V$ is also bounded by $1$ and so $X_V$ is minimal. By analyzing the proof of Proposition~\ref{prop:6.3}, we can see that this construction satisfies (i), (ii) and (iii). In fact, (i) and (ii) are explicit from the proof. For (iii) we can take $\ell$ to be larger than the lengths of all buildings of $v_{i+1,j}$ from $\{v_{i,1},\dots, v_{i,m}\}$ for $1\leq j\leq m$. Then (iii) follows from the argument for the goodness of the construction in the proof of Proposition~\ref{prop:6.3}.

It remains to verify that $\symbrank(X_V)\geq N$. Suppose $\symbrank(X_V)=n$. Then by Corollary~\ref{cor:6.8} we have $\toprank(X_V,\sigma)\leq 8n^2$. By \cite{Es}, $K=\toprank(X,T)=\toprank(X_W,\sigma)\leq \toprank(X_V,\sigma)\leq 8n^2$. Since $K\geq 8N^2$, we have $n\geq N$. 
\end{proof}

\begin{proposition}\label{prop:7.2} For any $N\geq 1$, there exists a minimal subshift $X_V$ which is not a factor of any minimal subshift of symbolic rank $\leq N$. In particular, $X_V$ is not conjugate to any minimal subshift of symbolic rank $\leq N$.
\end{proposition}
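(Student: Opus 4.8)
The plan is to exploit the rigidity forced by Lemma~\ref{lem:7.1} to push a rank-$n$ construction from an arbitrary minimal pre-image down through a factor map onto $X_V$ \emph{itself}, obtaining directly that $\symbrank(X_V)\le n$. Using Lemma~\ref{lem:7.1} I first fix $X_V$ together with a good rank-$m$ construction $\{v_{i,j}\}$ satisfying (i)--(iii) and with $\symbrank(X_V)>N$ (take the lemma's parameter to exceed $N$). Recall that a good construction is uniquely readable, that by~(ii) its buildings above the first level use no spacers, and that by Lemma~\ref{lem:6.3} each event ``an expected occurrence of $v_{i,j}$ starts at position $k$'' is clopen in $X_V$. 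Suppose toward a contradiction that $\varphi\colon Y\to X_V$ is a factor map from a minimal subshift $Y$ with $\symbrank(Y)=n\le N$; note $\varphi(Y)=X_V$ since factor maps are surjective. By Corollary~\ref{cor:4.3}, $Y=X_U$ for some $U$ carrying a proper rank-$n$ construction $\{u_{p,q}\}_{p\ge 0,\,1\le q\le n}$ of bounded spacer parameter, and, being a factor map between subshifts of $2^{\mathbb Z}$, $\varphi$ is a sliding block code (Curtis--Hedlund--Lyndon) of some radius $r$.

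The first step is to observe that the level-$i$ block structure of $\varphi(y)$ is itself a sliding block code of $y$. Indeed, pulling the clopen sets of Lemma~\ref{lem:6.3} back through the continuous, shift-equivariant map $\varphi$ yields clopen subsets of $Y$, each a finite union of cylinders; hence there is a radius $R=R(i)$, uniform in the position, such that the locations and types of all expected occurrences of the level-$i$ blocks in $\varphi(y)$ are determined by $y$ through a code of radius $R$.

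The main step is then to pick the level $p$ so large that $\min_q|u_{p,q}|$ dwarfs $R$, and, for each $1\le q\le n$, to extract from $\varphi$ an image word $\tilde v_q\in\mathcal F$ assembled from the level-$i$ blocks that sit inside an occurrence of $u_{p,q}$. Far from the ends of a $u_{p,q}$-block the radius-$R$ code sees only the content of $u_{p,q}$, so the bulk of $\tilde v_q$ depends on $q$ alone; the boundedly many level-$i$ blocks straddling the two ends are exactly where properties (i)--(iii) enter. Property~(iii) (unique determination of a block, or of a product of two blocks, from $\ell$ blocks of context on each side), together with~(i) and the spacer-freeness of~(ii), forces these boundary blocks from a bounded amount of block-context and lets me apportion each straddling block unambiguously to one side, so that the decomposition of $X_V=\varphi(Y)$ into the words $\tilde v_q$ is globally consistent. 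The building relation $u_{p+1,q}=u_{p,q_1}\cdots u_{p,q_k}$ (bounded spacers) then transfers to a building of $\tilde v_q$ from $\{\tilde v_{q_1},\dots,\tilde v_{q_k}\}$; since $Y$'s construction is proper with $n$ blocks per level, telescoping produces a proper rank-$n$ construction of bounded spacer parameter for a word $\tilde V$ with $X_{\tilde V}=X_V$. Hence $\symbrank(X_V)\le n\le N$, contradicting $\symbrank(X_V)>N$. The concluding ``in particular'' is immediate, as a conjugacy is a factor map.

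The step I expect to be the main obstacle is the boundary analysis of the third paragraph. A generic radius-$r$ code splits the image blocks against the grid of $u_{p,q}$-blocks, and a priori the image of $u_{p,q}$ could depend on the full neighbouring pair of types and the intervening spacer, which would only bound the number of image blocks by a quantity on the order of $n^2$ times the spacer bound rather than by $n$. Collapsing this down to exactly $n$ blocks is precisely the role of the rigidity clauses (i)--(iii): they are engineered so that every level-$i$ block occurring in a bounded window is pinned down by its block-context, so the finitely many boundary configurations between adjacent $u$-blocks resolve coherently. Checking that this apportionment is simultaneously consistent across all adjacent pairs, so that the $\tilde v_q$ are well defined and satisfy the required building and properness relations, is the delicate point, and it is where the full strength of Lemma~\ref{lem:7.1} is used.
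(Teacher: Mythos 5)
Your overall strategy---rigidify the target via Lemma~\ref{lem:7.1}, then pull the finite-rank structure of a hypothetical preimage through the factor map to induce a finite-rank construction on $X_V$ itself---is exactly the paper's, but the step you yourself flag as delicate is a genuine gap, and it does not close. The rigidity clauses (i)--(iii) live entirely on the target construction $\{v_{i,j}\}$, and what they buy (and all they buy, in the paper's proof) is this: when two expected source-level blocks abut as $u_{p,q}1^a u_{p,q'}$, the spacer $a$ is a function of the pair $(q,q')$ alone---one finds $\ell$ consecutive $v_{r,j}$-blocks deep inside the image of each of the two source blocks, and clause (iii) pins down the bounded junction word $w$, hence $a$. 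They do \emph{not} make the image of $u_{p,q}$ independent of its right neighbour. For a block code of radius $k_2$, the image over the junction window $[k+|u_{p,q}|-k_2,\,k+|u_{p,q}|+a+k_2-1]$ genuinely depends on both $q$ and $q'$; moreover the junction words contain $0$s, so in a symbolic-rank construction they must be absorbed into blocks rather than treated as spacers, and no apportionment of straddling blocks to one side removes the pair-dependence. The induced construction on $X_V$ therefore has its alphabet indexed by \emph{pairs} of source blocks, i.e.\ rank on the order of $n^2$, not $n$; your conclusion $\symbrank(X_V)\le n$ is unjustified, and since you invoked Lemma~\ref{lem:7.1} only with parameter exceeding $N$, the contradiction evaporates once the bound degrades to $O(n^2)$. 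There is also a second, smaller gap: you work directly with the proper rank-$n$ construction of $Y$ from Corollary~\ref{cor:4.3}, but proper constructions need not be uniquely readable, so ``the level-$i$ blocks that sit inside an occurrence of $u_{p,q}$'' is not well defined; one must first pass to a good (hence uniquely readable) cover on the \emph{source} side.

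The repair is quantitative rather than structural, and it is precisely the paper's proof. Invoke Lemma~\ref{lem:7.1} with parameter $4N^2+1$, so $\symbrank(X_V)\ge 4N^2+1$. Replace $Y$ by a minimal subshift $X_W$ with a \emph{good} rank-$2n$ construction factoring onto $Y$ (Proposition~\ref{prop:6.3}), which supplies expected occurrences on the source. Prove the claim above that the spacer between adjacent expected blocks $w_{s,q}1^aw_{s,q'}$ is determined by $(q,q')$, and for each such pair define the trimmed image word $u_{s,q,q'}\in\mathcal{F}$; there are at most $(2n)^2=4n^2$ of these per level, each element of $T_{s+1}$ is built from $T_s$, and Proposition~\ref{prop:4.1} yields a rank-$4n^2$ construction of a word $V'$ with $X_{V'}\subseteq X_V$, hence $X_{V'}=X_V$ by minimality. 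This gives $\symbrank(X_V)\le 4n^2\le 4N^2$, contradicting $\symbrank(X_V)\ge 4N^2+1$. In short: your mechanism is the right one, but the collapse from pair-indexed blocks to $n$ blocks is false in general, and the argument only succeeds because Lemma~\ref{lem:7.1} can be tuned to beat the $4n^2$ bound.
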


\begin{proof}
By Lemma~\ref{lem:7.1} there is $m>4N^2+1$ and we have an infinite word $V$ which has a good rank-$m$ construction with associated rank-$m$ generating sequence $\{v_{i,j}\}_{i\geq 0,1\le j\le m}$ satisfying (i), (ii) and (iii) in Lemma~\ref{lem:7.1}, so that $X_V$ is minimal and $\symbrank(X_V)\geq 4N^2+1$. Assume toward a contradiction that $n\leq N$ and $W'$ has a proper rank-$n$ construction with bounded spacer parameter such that $X_V$ is a factor of $X_{W'}$.  

By Proposition~\ref{prop:6.3}, we have an infinite word $W$ which has a good rank-$2n$ construction with associated rank-$2n$ generating sequence $\{w_{p,q}\}_{p\geq 0,1\le q\le 2n}$ such that $X_W$ is minimal and $X_{W'}$ is a factor of $X_W$. Let $f$ be a factor map from $(X_W,\sigma)$ to $(X_V,\sigma)$. 

Let $k_1$ be a positive integer such that $1^{k_1}$ is not a subword of $W$. Let $k_2$ be a positive integer such that for any $x,y\in X_W$ and $k\in\mathbb{Z}$, whenever $x\rest [k-k_2, k+k_2]=y\rest [k-k_2,k+k_2]$, we have $f(x)(k)=f(y)(k)$. Let $r\ge 1$ so that $\min_{1\le j\le m}|v_{r,j}|\gg k_1+2k_2$. Let $\ell\geq 1$ be given by (iii) in Lemma~\ref{lem:7.1}, that is, for any two finite sequences $(j_1,j_2,\cdots,j_\ell)$ and $(j'_1,j'_2,\cdots,j'_\ell)$ of elements of $\{1,2,\cdots,m\}$, there is at most one element $w$ of 
$$\{v_{r,j}v_{r,j'}\,:\,1\le j,j'\le m\}\cup\{v_{r,j}\,:\,1\le j\le m\}$$ such that 
$$v_{r,j_1}v_{r,j_2}\cdots v_{r,j_\ell}wv_{r,j'_1}v_{r,j'_2}\cdots v_{r,j'_\ell}$$ is a subword of $V$. We can also find $s_0\ge1$ so that 
$$\displaystyle\frac{\min_{1\le q\le 2n}|w_{s_0,q}|-2k_2}{\max_{1\le j\le m}|v_{r,j}|}>\ell+2.$$ 

We claim that for any $s\geq s_0$, $1\le q,q'\le 2n$, $a\geq 0$, and $x\in X_W$, if $w_{s,q}1^aw_{s,q'}$ occurs in $x$, where the demonstrated occurrences of $w_{s,q}$ and $w_{s,q'}$ are expected, then $a$ is determined by $q$ and $q'$ only (and in particular $a$ does not depend on $x$). To see this, let $k$ be the starting position of the assumed occurrence of $w_{s,q}1^aw_{s,q'}$ in $x$, and let $k'$ be the starting position of the demonstrated occurrence of $w_{s,q'}$. Then $f(x)\rest [k+k_2, k+|w_{s,q}|-k_2-1]$ and $f(x)\rest [k'+k_2,k'+|w_{s,q'}|-k_2-1]$ are determined only by $w_{s,q}$ and $w_{s,q'}$ by our assumption, and since $s\geq s_0$, each of them contains a subword of the form $v_{r,j_1}v_{r,j_2}\dots v_{r,j_\ell}$. Since $a<k_1$ and $\min_{1\leq j\leq m}|v_{r,j}|\gg k_1+2k_2$, we get that $f(x)\rest[k+k_2,k'+|w_{s,q'}|-k_2-1]$ contains a subword of the form 
$$ v_{r,j_1}v_{r,j_2}\cdots v_{r,j_\ell}wv_{r,j_1'}v_{r,j_2'}\cdots v_{r,j'_\ell} $$
where $f(x)\rest[k+k_2,k+|w_{s,q}|-k_2-1]$ contains the part $v_{r,j_1}\cdots v_{r,j_\ell}$, $f(x)\rest [k'+k_2,k'+|w_{s,q'}-k_2-1]$ contains the part $v_{r,j_1'}\cdots v_{r, j_{\ell}'}$, and $w$ is either of the form $v_{r,j}$ for some $1\leq j\leq m$ or of the form $v_{r,j}v_{r,j'}$ for $1\leq j,j'\leq m$. By our assumption, there is a unique such $w$, which implies that there is a unique $a$ by considering $|w|$.

By telescoping we may assume that the claim holds for any $s\geq 1$. We may also assume that $|w_{1,q}|\gg 2k_2+k_1+k_0$ for $1\le q\le 2n$, where $k_0$ is such that $1^{k_0}$ is not a subword of $V$. For any finite word $u$, let $\tilde{u}\in\mathcal{F}$ be the unique subword of $u$ such that $u=1^a\tilde{u}1^b$ for some nonnegative integers $a, b$. Now we define a set $T_s$ of finite words in $\mathcal{F}$ for all $s\geq 0$ as follows. For any $s\ge1$ and $1\le q,q'\le 2n$, if there are $x\in X_W$, $k\in \mathbb{Z}$, and $a\geq 0$ such that the word $w_{s,q}1^aw_{s,q'}$ occurs in $x$, where the demonstrated occurrences of $w_{s,q}$ and $w_{s,q'}$ are expected, then define a word $u_{s,q,q'}=\tilde{u}$ where $u=f(x)\rest [k+k_2,k+|w_{s,q}|+a+k_2-1]$. Let $T_s$ be the set of all $u_{s,q,q'}$ thus obtained for $s\geq 1$ and $1\leq q,q'\leq 2n$.  Let $T_0=\{0\}$. Then the sequence $\{T_s\}_{s\geq 0}$ satisfies the hypotheses of Proposition~\ref{prop:4.1}; in particular, every element of $T_{s+1}$ is built from $T_s$. Also, $|T_s|\leq 4n^2$. By Proposition~\ref{prop:4.1} we obtain a rank-$4n^2$ construction of an infinite word $V'$. Since each $u_{s,q,q'}$ is a subword of $V$, we have that $X_{V'}\subseteq X_V$. By the minimality of $X_V$, we have $X_{V'}=X_V$, and thus $X_V$ has symbolic rank $\leq 4n^2\leq 4N^2$, contradicting $\symbrank(X_V)\geq 4N^2+1$.
\end{proof}

In a sense, we separate the topological rank (or alphabet rank) complexity of a subshift into two parts: symbolic rank and spacer parameters. This proposition together with Proposition~\ref{prop:6.11} shows that both of these two parts are nontrivial.

Next we show that an infinite subshift factor of a minimal subshift of finite symbolic rank is not just conjugate to a subshift of finite symbolic rank -- it is itself a subshift of finite symbolic rank. This is a technical improvement of the result we mentioned at the beginning of this section. The proof  of this result is similar to the one for the above proposition.

\begin{theorem}\label{thm:new} Let $X$ be minimal subshift of finite symbolic rank and $Y$ be an infinite subshift that is a factor of $X$. Then $Y$ has finite symbolic rank, i.e., there is an infinite word $V$ with a finite rank construction such that $Y=X_V$.
\end{theorem}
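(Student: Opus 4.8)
The plan is to mimic the proof of Proposition~\ref{prop:7.2}, but instead of deriving a contradiction I will extract an honest finite rank construction for $Y$ out of the factor map. First I would reduce to a well-behaved presentation of $X$. Writing $n=\symbrank(X)$, Corollary~\ref{cor:4.3} gives $X=X_{V_0}$ with $V_0$ admitting a proper rank-$n$ construction of bounded spacer parameter, and then Proposition~\ref{prop:6.3} produces a word $W$ with a good (hence uniquely readable) rank-$2n$ construction, with generating sequence $\{w_{s,q}\}_{s\ge0,\,1\le q\le 2n}$ and spacer parameter bounded by some $M$, such that $X_W$ is minimal and $X$ is a factor of $X_W$. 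Composing the factor map $X_W\to X$ with the given factor map $X\to Y$ yields a factor map $f\colon X_W\to Y$. Since $Y$ is a factor of the minimal system $X$ it is itself minimal, and being infinite it contains no long run of $1$'s; fix $k_0$ with $1^{k_0}\notin L(Y)$. As $f$ is continuous it is a sliding block code: fix $k_2$ so that $x\rest[k-k_2,k+k_2]=y\rest[k-k_2,k+k_2]$ forces $f(x)(k)=f(y)(k)$. After telescoping (equivalently, restricting to indices $s\ge s_0$ and reindexing), I may assume $|w_{s,q}|>2k_2+k_0$ for all $s\ge1$ and all $1\le q\le 2n$.

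Next I would define the building blocks for $Y$. For $s\ge1$, $1\le q,q'\le 2n$ and $0\le a\le M$, suppose $w_{s,q}1^aw_{s,q'}$ occurs in some $x\in X_W$ with both occurrences expected; set $u=f(x)\rest[k+k_2,\,k+|w_{s,q}|+a+k_2-1]$, where $k$ is the start of this occurrence, and let $u_{s,q,a,q'}=\tilde u\in\mathcal F$ be its core, obtained by deleting the leading and trailing blocks of $1$'s as in Proposition~\ref{prop:7.2}. Locality of $f$ shows that $u$ depends only on $w_{s,q}$, $a$, and the length-$2k_2$ prefix of $w_{s,q'}$, so it is well defined, and the resulting set $T_s$ of cores satisfies $|T_s|\le 4n^2(M+1)$ uniformly in $s$; the length bound on the $w_{s,q}$ together with $1^{k_0}\notin L(Y)$ guarantees that each core is a nonempty element of $\mathcal F$. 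The decisive point is the hierarchical building property: because $w_{s+1,q}$ is an expected concatenation of level-$s$ blocks with spacers $\le M$, the images $D_i=f(x)\rest[p_i+k_2,\,p_{i+1}+k_2-1]$ over consecutive level-$s$ blocks (at positions $p_i$) concatenate exactly to $u$ — the $+k_2$ offset is precisely what absorbs the boundary ambiguity of $f$ — and each $D_i$ is a level-$s$ core framed by runs of $1$'s. Hence $u_{s+1,q,a,q'}$ is built from $T_s$. Setting $T_0=\{0\}$ (every element of $\mathcal F$ is trivially built from $\{0\}$), this verifies all the hypotheses of Proposition~\ref{prop:4.1}.

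Finally, Proposition~\ref{prop:4.1} furnishes a rank-$N_0$ word $V$, with $N_0=4n^2(M+1)$, whose generating sequence runs through the sets $T_s$. Every core lies in $L(Y)$, so every finite subword of $V$ does too, whence $X_V\subseteq Y$; since $X_V$ is a nonempty closed shift-invariant set and $Y$ is minimal, $X_V=Y$. Thus $Y=X_V$ has finite symbolic rank, with $\symbrank(Y)\le 4n^2(M+1)$. I expect the main obstacle to be exactly the hierarchical building step: one must check that the $+k_2$-shifted windows tile $f(x)$ consistently across levels and, in particular, that the spacer between the last level-$s$ block of $w_{s+1,q}$ and the first level-$s$ block of $w_{s+1,q'}$ equals the level-$(s+1)$ spacer $a$, so that the level-$s$ decomposition of $u_{s+1,q,a,q'}$ genuinely uses only blocks from the finite set $T_s$ with bounded spacers.
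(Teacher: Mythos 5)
Your proposal is correct and follows essentially the same route as the paper's proof: reduce via Corollary~\ref{cor:4.3} and Proposition~\ref{prop:6.3} to a good, uniquely readable presentation $X_W$ factoring onto $Y$, define the $k_2$-shifted windows over expected pairs $w_{s,q}1^aw_{s,q'}$, pass to the cores $\tilde{u}$ to obtain the uniformly finite sets $T_s$, and apply Proposition~\ref{prop:4.1} together with the minimality of $Y$ to conclude $X_V=Y$. The hierarchical building step you single out as the main obstacle is precisely the point the paper leaves implicit (``similar to the above proof''), and your $+k_2$-offset tiling argument --- with the level-$(s+1)$ spacer $a$ appearing as the gap between the last level-$s$ block of $w_{s+1,q}$ and the first level-$s$ block of $w_{s+1,q'}$, and adjacent cores separated by runs of $1$'s of length less than $2k_0$ --- verifies it correctly.
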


\begin{proof} By Proposition~\ref{prop:6.3} we may assume that $X=X_W$ where $W$ has a good rank-$n$ construction for some $n\geq 2$, with associated rank-$n$ generating sequence $\{w_{p,q}\}_{p\geq 0, 1\leq q\leq n}$. Let $f$ be a factor map from $(X_W,\sigma)$ to $(Y,\sigma)$. 

Let $k_1$ be a positive integer such that $1^{k_1}$ is not a subword of $W$. Let $k_2$ be a positive integer such that for any $x,y\in X_W$ and $k\in\mathbb{Z}$, whenever $x\rest[k-k_2,k+k_2]=y\rest[k-k_2,k+k_2]$, we have $f(x)(k)=f(y)(k)$. $Y$ is an infinite minimal subshift, let $k_3$ be a positive integer such that $1^{k_3}$ is not a subword of $x$ for any $x\in Y$. Without loss of generality we may assume $|w_{1,q}|\gg 2k_2+k_1+k_3$ for all $1\leq q\leq n$. 

Similar to the above proof, for each $p\geq 1$, if the word $w_{p,q}1^sw_{p,q'}$ occurs in some $x\in X_W$ at position $k\in\mathbb{Z}$, where the demonstrated occurrences of $w_{p,q}$ and $w_{p,q'}$ are expected, we define a word $u_{p,q,q',s}=\tilde{u}$ where 
$$u=f(x)\rest[k+k_2, k+|w_{s,q}|+s+k_2-1].$$
Then it is clear that every $y\in Y$ is built from 
$$T_p=\{u_{p,q,q',s}\,:\, 1\leq q,q'\leq n, 0\leq s<k_1\}.$$ By Proposition~\ref{prop:4.1} we obtain a rank-$n^2k_1$ construction of an infinite word $V$ such that $X_V\subseteq Y$. By the minimality of $Y$ we must have $X_V=Y$, and thus $Y$ has finite symbolic rank.
\end{proof}

A curious example is when $V$ is an infinite rank-$1$ word and $\varphi: X_V\to Y$ is the conjugacy map  defined by the substitution $0\mapsto 1$ and $1\mapsto 0$. $Y$ is in general no longer a rank-$1$ subshift but it has finite symbolic rank.

The above theorem has the following immediate corollary.

\begin{corollary}\label{cor:7.4} Let $n\geq 2$ and let $X$ be a minimal subshift of topological rank $n\geq 2$. Then $X$ has finite symbolic rank.
\end{corollary}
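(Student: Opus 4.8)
The plan is to obtain Corollary~\ref{cor:7.4} as a direct consequence of Theorem~\ref{thm:new}, using Theorem~\ref{thm:6.9} as the bridge. First I would record that a minimal subshift $X$ of topological rank $n\geq 2$ is automatically an infinite Cantor system: as a minimal Cantor system it is perfect, hence infinite. Consequently, by Lemma~\ref{lem:2.4} it is not conjugate to any infinite odometer; alternatively, since the odometers are precisely the minimal Cantor systems of topological rank $1$, the hypothesis $n\geq 2$ already excludes the odometer case.

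Next I would invoke Theorem~\ref{thm:6.9}. Since $X$ is a minimal Cantor system of finite topological rank $n\geq 2$ that is not an odometer, Theorem~\ref{thm:6.9} guarantees that $X$ is conjugate to a minimal subshift $X_V$ of finite symbolic rank (indeed with $\symbrank(X_V)\leq n$). The final step is to apply Theorem~\ref{thm:new}: a topological conjugacy is in particular a factor map, so $X$ is an infinite subshift that is a factor of the minimal subshift $X_V$ of finite symbolic rank. Theorem~\ref{thm:new} then yields immediately that $X$ itself has finite symbolic rank, i.e. $X=X_{V'}$ for some infinite word $V'$ admitting a finite rank construction.

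Since each step is a clean invocation of a previously established result, there is no genuine obstacle here. The only point requiring a moment of care is confirming that the system is not an odometer, so that Theorem~\ref{thm:6.9} delivers the symbolic-rank conclusion rather than the trivial odometer alternative; but this is immediate from $n\geq 2$ (equivalently, from Lemma~\ref{lem:2.4} applied to the infinite subshift $X$). It is worth emphasizing that the strength of the statement comes entirely from Theorem~\ref{thm:new}: Theorem~\ref{thm:6.9} alone only shows $X$ is \emph{conjugate} to a subshift of finite symbolic rank, whereas the corollary asserts that the subshift $X$ is \emph{itself} of finite symbolic rank.
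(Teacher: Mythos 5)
Your proposal is correct and matches the paper's own proof essentially verbatim: Theorem~\ref{thm:6.9} gives that $X$ is conjugate to a minimal subshift of finite symbolic rank (the odometer alternative being excluded by $n\geq 2$, as you note), and Theorem~\ref{thm:new}, applied with the conjugacy viewed as a factor map, upgrades this to $X$ itself having finite symbolic rank. Your added checks that $X$ is infinite and not an odometer are exactly the implicit hypotheses the paper relies on, so nothing is missing.
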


\begin{proof} By Theorem~\ref{thm:6.9} $X$ is conjugate to a minimal subshift of finite symbolic rank. Thus $X$ has finite symbolic rank by Theorem~\ref{thm:new}.
\end{proof}

Next we show that any infinite odometer is the maximal equicontinuous factor of a minimal subshift of symbolic rank $2$. This is in contrast with the result in \cite{GZ19} that any equicontinuous factor of a rank-$1$ subshift is finite.

We use the following fact, which is folklore.

\begin{lemma}\label{lem:7.3} Let $(X,T)$ and $(Y,S)$ be topological dynamical systems and let $f$ be a factor map from $(X,T)$ to $(Y,S)$. Suppose $(Y,S)$ is equicontinuous and suppose for all $x_1, x_2\in X$, if $f(x_1)=f(x_2)$ then $x_1,x_2$ are proximal. Then $(Y,S)$ is the maximal equicontinuous factor of $(X,T)$.
\end{lemma}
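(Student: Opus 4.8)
The plan is to verify the universal property of the maximal equicontinuous factor directly for the given factor map $f:(X,T)\to(Y,S)$. Since $(Y,S)$ is assumed equicontinuous, what remains is to show that for every equicontinuous factor $(Z,G)$ of $(X,T)$, with factor map $\psi:X\to Z$, there is a factor map $\theta:(Y,S)\to(Z,G)$ satisfying $\psi=\theta\circ f$. The candidate is forced: we must have $\theta(f(x))=\psi(x)$ for every $x\in X$, so the crux is to show that this assignment is well defined, i.e. that $f(x_1)=f(x_2)$ implies $\psi(x_1)=\psi(x_2)$.

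I would isolate two elementary ingredients. First, in any equicontinuous system proximal points coincide: if $z_1,z_2\in Z$ are proximal and $d_Z(z_1,z_2)=\epsilon>0$, then taking $\delta$ from equicontinuity for this $\epsilon$ and an $n$ with $d_Z(G^nz_1,G^nz_2)<\delta$, applying $G^{-n}$ gives $d_Z(z_1,z_2)<\epsilon$, a contradiction. Second, factor maps preserve proximality: since $\psi$ is uniformly continuous on the compact space $X$ and $\psi\circ T=G\circ\psi$, any sequence $(n_k)$ witnessing proximality of $x_1,x_2$ pushes forward, via uniform continuity, to a sequence witnessing proximality of $\psi(x_1),\psi(x_2)$ in $Z$. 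Combining these with the hypothesis yields the chain $f(x_1)=f(x_2)\Rightarrow x_1,x_2\text{ proximal}\Rightarrow\psi(x_1),\psi(x_2)\text{ proximal}\Rightarrow\psi(x_1)=\psi(x_2)$, so $\theta$ is well defined on $f(X)=Y$.

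It then remains to check that $\theta$ is a factor map. Equivariance is a direct computation: $\theta(Sy)=\theta(Sf(x))=\theta(f(Tx))=\psi(Tx)=G\psi(x)=G\theta(y)$. Surjectivity follows from that of $\psi$, since $\theta(Y)=\theta(f(X))=\psi(X)=Z$. For continuity I would argue by compactness and sequences: given $y_n\to y$ in $Y$, lift to $x_n\in f^{-1}(y_n)$; every subsequence has a further subsequence $x_{n_j}\to x$, and continuity of $f$ forces $f(x)=y$, whence $\theta(y_{n_j})=\psi(x_{n_j})\to\psi(x)=\theta(y)$ by continuity of $\psi$. As every subsequence admits a sub-subsequence converging to $\theta(y)$, we conclude $\theta(y_n)\to\theta(y)$. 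Uniqueness of $\theta$ is automatic since it is determined on $f(X)=Y$.

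The step that genuinely uses all the hypotheses is the well-definedness of $\theta$, which is precisely where the proximality assumption on the fibers of $f$ and the equicontinuity of the target $Z$ are combined; I expect this to be the conceptual heart of the argument, while everything else is routine. The only point requiring mild care is the continuity of $\theta$, which the compactness argument above handles cleanly; alternatively one may observe that $f$, being a continuous surjection of compact metric spaces, is a quotient map, and invoke the universal property of quotient maps together with the fact that $\psi$ is constant on the fibers of $f$.
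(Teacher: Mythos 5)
Your proof is correct, and in fact the paper offers no proof at all to compare it against: Lemma~\ref{lem:7.3} is stated there with the remark that it is folklore, and your argument is precisely the standard folklore proof --- define $\theta$ on $Y=f(X)$ by $\theta(f(x))=\psi(x)$, with well-definedness following from the chain ``$f(x_1)=f(x_2)$ implies $x_1,x_2$ proximal, factor maps preserve proximality, and proximal points in an equicontinuous system coincide,'' the rest being routine. One detail worth noting is that your first ingredient silently uses equicontinuity for the inverse iterates $G^{-n}$, which is legitimate here because the paper's definition of equicontinuity quantifies over all $n\in\mathbb{Z}$; with that observed, every step (including the subsequence argument for continuity of $\theta$, which is just the closed-map/quotient property of continuous surjections of compact metric spaces in disguise) is sound.
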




\begin{theorem}\label{thm:7.4}  For any infinite odometer $(Y,S)$, there exists a minimal subshift $X_V$ of symbolic rank $2$ such that $(Y,S)$ is the maximal equicontinuous factor of $(X_V,\sigma)$.
\end{theorem}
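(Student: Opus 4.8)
The plan is to realize $Y$ by a rank‑$2$ construction in which \emph{all spacers are pushed to the bottom level}, so that above level $1$ the two words are produced by pure concatenation, i.e.\ by a (length‑varying) substitution on two letters. This is exactly what allows an \emph{infinite} odometer to appear while the spacer parameter stays bounded: the odometer coordinate will come from the nesting of equal‑length words, not from divisibility of spacers (the latter would force unbounded spacers, hence $1^{\mathbb{Z}}\in X_V$ and non‑minimality by Corollary~\ref{cor:4.3}). Concretely, write $Y=\varprojlim \mathbb{Z}/q_k\mathbb{Z}$ with $q_0=1$, $q_k\mid q_{k+1}$ and $q_k\to\infty$; after passing to a subsequence (which does not change $Y$) I may assume $q_1$ is as large as needed and $n_i:=q_{i+1}/q_i\geq 3$ for all $i\geq 1$.

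First I would fix two distinct $\{0,1\}$‑words $v_{1,1},v_{1,2}$ of common length $h_1=q_1$, coded with isolated $1$'s (spacers bounded by $1$) in the style of the proof of Theorem~\ref{thm:6.9}, chosen mutually recognizable so that unique readability and goodness hold at the bottom level. For $i\geq 1$ I would set, with \emph{no} spacers, $v_{i+1,j}=v_{i,j_1}v_{i,j_2}\cdots v_{i,j_{n_i}}$, where $j_1j_2\cdots j_{n_i}=\rho_i(j)$ for a primitive length‑$n_i$ substitution $\rho_i$ on $\{1,2\}$ satisfying: (a) each of $\rho_i(1),\rho_i(2)$ contains both letters (properness), and (b) each of $\rho_i(1),\rho_i(2)$ begins with the letter $1$, i.e.\ both $v_{i+1,1}$ and $v_{i+1,2}$ begin with $v_{i,1}$. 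For instance $\rho_i(1)=1\,2\,1^{\,n_i-2}$ and $\rho_i(2)=1\,2^{\,n_i-1}$ work. Then $h_i:=|v_{i,1}|=|v_{i,2}|=q_i$ for all $i$, the only spacers occur at level $1$ and are bounded, and $v_{1,1}\neq v_{1,2}$ both occur in every point, so by Corollary~\ref{cor:4.3}~(2)$\Rightarrow$(1) the word $V=\lim_i v_{i,1}$ generates an infinite minimal subshift $X_V$ with $\symbrank(X_V)\leq 2$.

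Next I would build the factor map $f\colon X_V\to Y$. Since there are no spacers above level $1$, each $x\in X_V$ is, after its unique reading, a gapless bi‑infinite concatenation of level‑$i$ words of length $q_i$, and the level‑$(i+1)$ cutting points refine the level‑$i$ ones; within a level‑$(i+1)$ block the level‑$i$ words start at offsets $0,q_i,\dots,(n_i-1)q_i$, and $q_i\mid q_{i+1}$, so inductively all starting positions of expected level‑$i$ words in $x$ lie in one residue class $-\phi_i(x)\in\mathbb{Z}/q_i\mathbb{Z}$, with $\phi_{i+1}(x)\equiv\phi_i(x)\pmod{q_i}$. Setting $f(x)=(\phi_i(x))_i\in\varprojlim\mathbb{Z}/q_i\mathbb{Z}=Y$ gives a map that is continuous (the relevant sets are clopen by Lemma~\ref{lem:6.3} and Proposition~\ref{prop:6.4}) and satisfies $f\circ\sigma=S\circ f$; by minimality of $X_V$ its image is a nonempty closed $S$‑invariant subset of the minimal system $Y$, hence $f$ is onto.

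Finally, and this is the crux, I would verify the hypothesis of Lemma~\ref{lem:7.3}: any two points of a common fibre are proximal. If $f(x)=f(x')$, then $x$ and $x'$ are \emph{aligned}, i.e.\ they share all level‑$i$ cutting points for every $i$. Fix $i$ and a position $t$ that starts a level‑$(i+1)$ word in both $x$ and $x'$. By design~(b) both blocks begin with $v_{i,1}$, so $x\rest[t,t+q_i-1]=x'\rest[t,t+q_i-1]=v_{i,1}$; shifting by $m=t+\lfloor q_i/2\rfloor$ centers this window and shows $\sigma^{m}x$ and $\sigma^{m}x'$ agree on a block of length about $q_i$ around the origin. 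As $i\to\infty$ we have $q_i\to\infty$, so $\inf_m$ of the distance between $\sigma^m x$ and $\sigma^m x'$ is $0$, i.e.\ $x,x'$ are proximal. Since odometers are equicontinuous, Lemma~\ref{lem:7.3} yields that $(Y,S)$ is the maximal equicontinuous factor of $(X_V,\sigma)$; and because this MEF is infinite whereas by Theorem~1.5 of \cite{GZ19} a rank‑$1$ subshift has finite MEF, $X_V$ is not conjugate to a rank‑$1$ subshift, so in fact $\symbrank(X_V)=2$. I expect the main obstacle to be exactly this proximality verification together with keeping the spacer parameter bounded; both are resolved by the single design decision to confine all spacers to level $1$ and to force the common prefix $v_{i,1}$ at every level via condition~(b).
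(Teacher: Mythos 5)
Your route is genuinely different from the paper's, and in one key respect simpler. The paper also realizes $(Y,S)$ via a proper rank-$2$ construction, but with the opposite design decision: its two level-$i$ words have \emph{different} lengths $p_iA_i$ and $q_iA_i$, where $p_i,q_i$ are relatively prime and $A_i$ is the $i$-th partial product of the odometer, so the odometer coordinate comes from the divisibility of both lengths by $A_i$ (all expected level-$i$ starting positions are congruent mod $A_i$). The price of that design is paid in the proximality verification, which is the crux of the paper's proof: a delicate alignment argument that uses the coprimality of $p_{n_0+1}$ and $q_{n_0+1}$ (writing a discrepancy of cutting points as $ap_{n_0+1}-bq_{n_0+1}$ to manufacture a common expected occurrence, contradicting the minimal level $n_0$ of misalignment, and finally showing a non-proximal fibre pair would force $y=\sigma^m x$ and $f(x)$ periodic). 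Your equal-length design with the forced common prefix $v_{i,1}$ (indeed $v_{i,1}v_{i,2}$, since both $\rho_i(1)$ and $\rho_i(2)$ begin $12$) makes exactly this step nearly trivial: equal fibre values give common cutting points at every level, hence agreement on blocks of length $q_i\to\infty$, hence proximality. The remaining architecture --- Lemma~\ref{lem:7.3}, surjectivity via a closed invariant image in a minimal system, and Theorem~1.5 of \cite{GZ19} to pin $\symbrank(X_V)=2$ --- matches the paper.

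The step you have not actually justified is unique readability \emph{above} level $1$, on which the well-definedness of every $\phi_i$, and hence of $f$, rests: your parenthetical arranges recognizability only for $\{v_{1,1},v_{1,2}\}$, and goodness in the paper's sense at higher levels is not automatic for equal-length pure concatenations (this is precisely what the paper's different-length, marker-laden construction in Proposition~\ref{prop:6.3} is engineered to deliver). Fortunately your specific substitutions close the gap in a few lines: in any bi-infinite concatenation of the blocks $\rho_i(1)=1\,2\,1^{n_i-2}$ and $\rho_i(2)=1\,2^{n_i-1}$ (with $n_i\geq 3$), the two-letter word $12$ occurs only at block starts --- inside $\rho_i(1)$ the consecutive pairs after position $0$ are $21,11,\dots,11$, inside $\rho_i(2)$ they are $22,\dots,22$, and across any junction the spanning pair is $11$ or $21$ --- while every block begins with $12$; so block starts are intrinsic to the level-$i$ coding, the level-$(i+1)$ parsing is unique given the level-$i$ one, and induction from your level-$1$ markers gives unique readability of the whole construction. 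With that in hand you should quote Proposition~\ref{prop:6.4} (which needs properness plus unique readability, not goodness) for the clopenness giving continuity of $f$. Two smaller patches: infiniteness of $X_V$ does not follow from ``$v_{1,1}\neq v_{1,2}$ both occur in every point'' (a periodic word can contain both); it does follow once $f$ is constructed, since any period $p$ of a point of $X_V$ must satisfy $q_i\mid p$ for all $i$ by equivariance, and minimality comes from the remark after Corollary~\ref{cor:4.3} (the implication (2)$\Rightarrow$(1)) without presupposing anything about $\symbrank(X_V)$.
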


\begin{proof}  We inductively define two sequences $\{p_i, q_i\}_{i\geq 0}$ of positive integers as follows. Let $p_0=q_0=1$. For $i\geq 0$, let $p_{i+1}=2p_i+2q_i$ and $q_{i+1}=2p_i+q_i$. It is easy to see that for any $i\geq 0$, $q_i$ is odd and $p_i,q_i$ are relatively prime. 

Let $B=(W,E,\preceq)$ be a simple Bratteli diagram associated to $(Y,S)$ such that $|W_i|=1$ and $a_{i+1}\triangleq |E_{i+1}|>1$ for all $i\ge 0$. By telescoping, we may assume $a_i\gg p_i+q_i$ for any $i\ge 1$. Consider the following proper rank-$2$ construction:
$$\begin{array}{rclrcl} v_{0,1}&\!\!\!\!=\!\!\!\!&v_{0,2}=0, & &\!\!\!\! & \\ 
v_{1,1}&\!\!\!\!=\!\!\!\!&0^{a_1}1^{2a_1}0^{a_1},& v_{1,2}&\!\!\!\!=\!\!\!\!&0^{a_1}1^{a_1}0^{a_1}, \\ 
v_{i+1,1}&\!\!\!\!=\!\!\!\!&{v_{i,1}}^{a_{i+1}}{v_{i,2}}^{2a_{i+1}}{v_{i,1}}^{a_{i+1}},& v_{i+1,2}&\!\!\!\!=\!\!\!\!&{v_{i,1}}^{a_{i+1}}{v_{i,2}}^{a_{i+1}}{v_{i,1}}^{a_{i+1}}, \mbox{ for $i\ge1$.}
\end{array}
$$
It is easy to see that the subsequence $\{i_k\}_{k\geq 0}$ where $i_0=0$ and $i_k=k+1$ for $k\geq 1$ gives a telescoped construction that is good and hence has unique readability. Also, for any $n\geq 1$, $|v_{n,1}|=p_n\prod_{i=1}^na_i$ and $|v_{n,2}|=q_n\prod_{i=1}^na_i$. For notational simplicity let $A_n=\prod_{i=1}^na_i$ for all $n\geq 1$ and let $A_0=1$. Let $V=\lim_{n}v_{n,1}$.

For each $i\geq 1$, enumerate the elements of $E_i$ in the $\preceq$-increasing order as $e_{i,1},\dots, e_{i,a_i}$.  Define $f:X_V\rightarrow X_B$ by letting $f(x)(i)=e_{i+1,j}$ if there exists an expected occurrence of $v_{i+1,1}$ in $x$ starting at position $k\in\mathbb{Z}$ such that for some $\ell\in\mathbb{Z}$ we have that $1\leq j\leq a_{i+1}$ satisfies 
$$ (j-1)A_i\leq k+\ell A_{i+1}<jA_i. $$
$f$ is well-defined because $|v_{i+1,1}|$ and $|v_{i+1,2}|$ are both multiples of $A_{i+1}$, and thus for any two expected occurrences of $v_{i+1,1}$ in $x$, their starting positions differ by a multiple of $A_{i+1}$. It is clear that $f$ is a factor map from $(X_V,\sigma)$ to $(X_B,\lambda_B)$. 


By Lemma~\ref{lem:7.3}, in order to complete the proof, it suffices to show that for any $x,y\in X_V$, if $f(x)=f(y)$ then $x,y$ are proximal. Toward a contradiction, assume $x,y$ are not proximal but $f(x)=f(y)$. Thus there exists $n\geq 1$ such that no $k\in\mathbb{Z}$ is the starting position of both an expected occurrence of $v_{n,1}$ in $x$ and one in $y$. Let $n_0$ be the least such $n$.

On the other hand, from the assumption $f(x)=f(y)$, we can verify by induction that for all $n\geq 0$, if $k_1$ is the starting position of an expected occurrence of $v_{n+1,1}$ or $v_{n+1,2}$ in $x$ and $k_2$ is the starting position of an expected occurrence of $v_{n+1,1}$ or $v_{n+1,2}$ in $y$, then $k_1-k_2$ is a multiple of $A_{n+1}$. 


We claim that there exist no $k<h$ such that $h-k=tA_{n_0+1}$ for some $1\le t<p_{n_0+1}$, $h$ is the starting position of at least $p_{n_0+1}$ many consecutive expected occurrences of $v_{n_0+1,2}$ in $x$ (or $y$), and $k$ is the starting position of at least $q_{n_0+1}$ many consecutive expected occurrences of $v_{n_0+1,1}$ in $y$ (or $x$, respectively).


If not, then from the property that $p_{n_0+1}$ and $q_{n_0+1}$ are relatively prime, we can get positive integers $a< q_{n_0+1}$ and $b< p_{n_0+1}$ such that $t=ap_{n_0+1}-bq_{n_0+1}$. Then $k+a|v_{n_0+1,1}|=h+b|v_{n_0+1,2}|$. This is the starting position of an expected occurrence of $v_{n_0+1,1}$ in $y$ (or $x$), while at the same time it is also the starting position of an expected occurrence of $v_{n_0+1,2}$ in $x$ (or $y$, respectively). Thus it is the starting position of an expected occurrence of $v_{n_0,1}$ in both $x$ and $y$, contradicting our definition of $n_0$. 


Now let $P$ be the $(n_0+2)$-th layer of the reading of $x$, that is, $(k,j)\in P$ iff there is an expected occurrence of $v_{n_0+2,j}$ in $x$; let $Q$ be the $(n_0+2)$-th layer of the reading of $y$. Suppose $(k,j)\in P$ where $j=1$ or $2$. Consider the positions from $k+a_{n_0+2}|v_{n_0+1,1}|$ to $k+a_{n_0+2}|v_{n_0+1,1}|+(3-j)a_{n_0+2}|v_{n_0+1,2}|$. If one of these positions is the starting position of an expected occurrence of $v_{n_0+2,1}$ or $v_{n_0+2,2}$ in $y$, then from $a_{n_0+2}\gg p_{n_0+2}+q_{n_0+2}$, we get a contradiction to the above claim. So these positions must be contained in the same expected occurrence of $v_{n_0+2,1}$ or $v_{n_0+2,2}$ in $y$, which gives us a unique $(k',j')\in Q$. It follows from the above claim and the assumption $a_{n_0+2}>>p_{n_0+2}+q_{n_0+2}$ that $j'=j$ and $|k-k'|<\frac{1}{4}|v_{n_0+2,2}|$. Let $m=k-k'$. Applying this to all $(k,j)\in P$, we obtain corresponding $(k',j)\in Q$ and $m=k-k'$. Clearly $m$ is constant, which implies that $y=\sigma^m(x)$ and that $f(x)=f(y)$ is periodic, a contradiction.
\end{proof}

Finally we consider non-Cantor factors of subshifts of finite symbolic rank. By a combination of existing research, we can see that any irrational rotation is the maximal equicontinuous factor of a minimal subshift of symbolic rank $2$. In fact the symbolic rank-$2$ subshifts are generated by the Sturmian sequences that are symbolic representations of irrational rotations (for details, see e.g. \cite[Section 6.1.2]{Ar}). In \cite{GJJLLSW} it was shown that all Sturmian sequences have a proper rank-$2$ construction. As noted in \cite{DDM2000}, it follows from the work of \cite{HM} that for any irrational number $0<\alpha<1$, there is a Sturmian sequence $V_\alpha$ and a factor map $\theta$ from $(X_{V_\alpha},\sigma)$ to $(\mathbb{T}, +\alpha)$ such that $\theta$ is injective on a comeager subset of $X_{V_\alpha}$. By a well-known criterion (e.g. \cite[Proposition 1.1]{Wi}), $(\mathbb{T},+\alpha)$ is the maximal equicontinuous factor of $(X_{V_\alpha},\sigma)$. Conversely, since any Sturmian sequence is $V_\alpha$ for some irrational $0<\alpha<1$ (see e.g. \cite[Theorem 6.4.22]{Ar}), the maximal equicontinuous factor of a subshift generated by a Sturmian sequence is an irrational rotation.

\thebibliography{999}

\bibitem{AFP}
T. Adams, S. Ferenczi, K. Petersen,
Constructive symbolic presentations of rank one measure-preserving systems, \textit{Colloq. Math.} 150 (2017), no. 2, 243--255.

\bibitem{AD}
F. Arbul\'{u}, F. Durand,
Dynamical properties of minimal Ferenczi subshifts, \textit{Ergodic Theory Dynam. Systems} 43 (2023), no. 12, 3923--3970.

\bibitem{Ar}
P. Arnoux, 
Sturmian sequences, in
\textit{Substitutions in dynamics, arithmetics and combinatorics}, 143--198.
Lecture Notes in Math., 1794, Springer--Verlag, Berlin, 2002.

\bibitem{Au}
J. Auslander, \textit{Minimal Flows and Their Extensions}. North-Holland Mathematics Studies, vol. 153, North-Holland Publishing Co., Amsterdam, 1988, Notas de Matem\'{a}tica [Mathematical Notes], 122.

\bibitem{BD14}
V. Berth\'{e}, V. Delecroix,
Beyond substitutive dynamical systems: $S$-adic expansions,
in \textit{Numeration and substitution 2012}, 81--123. RIMS
K\^{o}ky\^{u}roku Bessatsu, B46. Res. Inst. Math. Sci. (RIMS), Kyoto, 2014.

\bibitem{BSTY}
V. Berth\'{e}, W. Steiner, J.M. Thuswaldner, R. Yassawi,
Recognizability for sequences of morphisms,
\textit{Ergodic Theory Dynam. Systems} 39 (2019), no. 11, 2896--2931.

\bibitem{BKMS}
S. Bezuglyi, J. Kwiatkowski, K. Medynets, B. Solomyak,
Finite rank Bratteli diagrams: structure of invariant measures,
\textit{Trans. Amer. Math. Soc.} 365 (2013), no. 5, 2637--2679.

\bibitem{Bo}
J. Bourgain,
On the correlation of the Moebius function with rank-one systems,
\textit{J. Anal. Math.} 120 (2013), 105--130.

\bibitem{BDM}
X. Bressaud, F. Durand, A. Maass,
On the eigenvalues of finite rank Bratteli-Vershik dynamical systems, 
\textit{Ergodic Theory Dynam. Systems} 30 (2010), no. 3, 639--664.


\bibitem{Ch}
R.V. Chacon,
Weakly mixing transformations which are not strongly mixing. 
\textit{Proc. Amer. Math. Soc.} 22 (1969), 559--562.

\bibitem{Da}
A.I. Danilenko,
Rank-one actions, their $(C, F)$-models and constructions with bounded parameters,
\textit{J. Anal. Math.} 139 (2019), no. 2, 697--749.

\bibitem{DDM2000}
P. Dartnell, F. Durand, A. Maass,
Orbit equivalence and Kakutani equivalence with Sturmian subshifts,
\textit{Studia Math.} 142 (2000), no. 1, 25--45.

\bibitem{DDMP}
S. Donoso, F. Durand, A. Maass, S. Petite,
Interplay between finite topological rank minimal Cantor systems, $\mathcal{S}$-adic subshifts and their complexity, \textit{Trans. Amer. Math. Soc.} 374 (2021), no. 5, 3453--3489. 


\bibitem{DM}
T. Downarowicz, A. Maass,
Finite rank Bratteli--Vershik diagrams are expansive,
\textit{Ergodic Theory Dynam. Systems} 28 (2008), no. 3, 739--747.

\bibitem{Du00}
F. Durand,
Linearly recurrent subshifts have a finite number of nonperiodic subshift factors,
\textit{Ergodic Theory Dynam. Systems} 20 (2000), no. 4, 1061--1078.

\bibitem{Du}
F. Durand,
Combinatorics on Bratteli diagrams and dynamical systems,
in \textit{Combinatorics, automata and number theory}, 324--372.
Encyclopedia Math. Appl., vol. 135. Cambridge Univ. Press, Cambridge, 2010.

\bibitem{DP}
F. Durand, D. Perrin,
\textit{Dimension Groups and Dynamical Systems — Substitutions, Bratteli Diagrams and Cantor Systems}.
Cambridge Studies in Advanced Mathematics, vol. 196. Cambridge Univ.
Press, Cambridge, 2022, vii+584pp.

\bibitem{ELD}
E.H. El Abdalaoui, M. Lemańczyk, T. de la Rue,
On spectral disjointness of powers for rank-one transformations and M\"{o}bius orthogonality,
\textit{J. Funct. Anal.} 266 (2014), 284--317.

\bibitem{Es}
B. Espinoza,
Symbolic factors of $\mathcal{S}$-adic subshifts of finite alphabet rank,
\textit{Ergodic Theory Dynam. Systems} 43 (2023), no. 5, 1511--1547.

\bibitem{Es2023}
B. Espinoza,
The structure of low complexity subshifts,
preprint, 2023. Available at {\tt arXiv:2305.03096}.

\bibitem{EG}
M. Etedadialiabadi, S. Gao,
Sarnak's conjecture for a class of rank-one subshifts,
\textit{Proc. Amer. Math. Soc. Ser. B} 9 (2022), 460--471.

\bibitem{Fe96}
S. Ferenczi,
Rank and symbolic complexity,
\textit{Ergodic Theory Dynam. Systems} 16 (1996), no. 4, 663--682.

\bibitem{Fe}
S. Ferenczi, Systems of finite rank, \textit{Colloq. Math.} 73:1 (1997), 35--65.


\bibitem{GH14}
S. Gao, A. Hill,
A model for rank one measure preserving transformations, 
\textit{Topology Appl.} 174 (2014), 25--40.

\bibitem{GH}
S. Gao, A. Hill,
Topological isomorphism for rank-$1$ systems,
\textit{J. Anal. Math.} 128 (2016), 1--49.

\bibitem{GH16}
S. Gao, A. Hill, 
Bounded rank-one transformations, \textit{J. Anal. Math.} 129 (2016), 341--365.

\bibitem{GZ19}
S. Gao, C. Ziegler,
Topological mixing properties of rank-one subshifts,
\textit{Trans. London Math. Soc.} 6 (2019), no. 1, 1--21.

\bibitem{GZ20}
S. Gao, C. Ziegler,
Topological factors of rank-one subshifts, 
\textit{Proc. Amer. Math. Soc. Ser. B} 7 (2020), 118--126.

\bibitem{GJJLLSW}
S. Gao, L. Jacoby, W. Johnson, J. Leng, R. Li, C.E. Silva, Y. Wu,
On finite symbolic rank words and subshifts, preprint, 2023. Available at {\tt arXiv:2010.05165}.

\bibitem{GPS}
T. Giordano, I.F. Putnam, C.F. Skau, 
Topological orbit equivalence and $C^*$-crossed products, \textit{J. Reine Angew. Math.} 469 (1995), 51--111.


\bibitem{GH*}
N. Golestani, M. Hosseini,
On toplogical rank of factors of Cantor minimal systems, 
\textit{Ergod Theory Dynam. Systems} 42 (2022), no. 9, 2866--2889.

\bibitem{HM}
G.A. Hedlund, M. Morse, 
Symbolic dynamics II. Sturmian trajectories,
\textit{Amer. J. Math.} 62 (1940), 1--42.

\bibitem{HPS}
R.H. Herman, I.F. Putnam, C.F. Skau,
Ordered Bratteli diagrams, dimension groups and topological dynamics,
\textit{Internat. J. Math.} 3 (1992), no. 6, 827--864.

\bibitem{Ka}
B. Kaya, 
The complexity of topological conjugacy of pointed Cantor minimal systems,
\textit{Arch. Math. Logic} 56 (2017), no. 3--4, 215--235.

\bibitem{Ke}
A.S. Kechris, \textit{Classical Descriptive Set Theory}. Graduate Texts in Mathematics, vol. 156. Springer--Verlag, New York, 1995.

\bibitem{KR}
A.S. Kechris, C. Rosendal,
Turbulence, amalgamation, and generic automorphisms of homogeneous structures,
\textit{Proc. Lond. Math. Soc.} 94 (2007), no. 2, 302--350.

\bibitem{Ku}
P. K\r{u}rka, \textit{Topological and Symbolic Dynamics}. Cours Sp\'ec. [Specialized Courses], 11. Soci\'et\'e Math\'ematique de France, Paris, 2003, xii+315 pp.

\bibitem{Le}
J. Leroy,
An $S$-adic characterization of minimal subshifts with first difference of complexity $1\leq p(n + 1) - p(n)\leq 2$,
\textit{Discrete Math. Theor. Comput. Sci.} 16 (2014), no. 1, 233--286.

\bibitem{ORW}
D. Ornstein, D. Rudolph, B. Weiss,
Equivalence of measure preserving transformations, \textit{Mem. Amer. Math. Soc.} 37 (1982), no. 262, xii+116pp.

\bibitem{PS}
R. Pavlov, S. Schmieding,
On the structure of generic subshifts,
\textit{Nonlinearity} 36 (2023), no. 9, 4904--4953.




\bibitem{Wi}
S. Williams, 
Toeplitz minimal flows which are not uniquely ergodic,
\textit{Z. Wahrsch. Verw. Gebiete} 67 (1984), no. 1, 95--107.

\end{document}